\newcommand{\ra}{{\rightarrow}}
\newcommand{\Z}{\mathbb{Z}}
\newcommand{\mL}{\mathcal{L}}
\newcommand{\CC}{\ensuremath{\mathcal{C}}}
\newcommand{\cS}{\ensuremath{\mathcal{S}}}
\newcommand{\cP}{{\mathcal{P}}}
\newcommand{\Hom}{{\sf Hom}}
\newcommand{\KS}{{\sf KS}}
\newcommand{\CO}{{\mathcal CO}}
\newcommand{\mO}{\ensuremath{\mathcal{O}}}
\newcommand{\mV}{\ensuremath{\mathcal{V}}}
\newcommand{\id}{{\sf id}}
\newcommand{\bone}{{\mathds{1}}}
\newcommand{\Eu}{{\sf Eu}}
\newcommand{\Der}{{\sf Der}}
\newcommand{\imag}{{\sf Im}}
\newcommand{\m}{\mathfrak{m}}
\newtheorem{thm}{Theorem}[section]
\newtheorem{lem}[thm]{Lemma}
\newtheorem{ass}[thm]{Assumption}
\newtheorem{cor}[thm]{Corollary}
\newtheorem{prop}[thm]{Proposition}
\newtheorem{rmk}[thm]{Remark}
\newtheorem{constr}[thm]{Construction}
\newtheorem{defi}[thm]{Definition}
\newtheorem{ex}[thm]{Example}
\title{Categorical primitive forms of Calabi-Yau $A_\infty$-categories with semi-simple cohomology}
\author[Amorim]{Lino Amorim}
\address{Department of Mathematics\\ Kansas State University\\ 138 Cardwell Hall, 1228 N. 17th Street\\
	Manhattan, KS 66506\\ USA}
\email{lamorim@ksu.edu}
\author[Tu]{Junwu Tu}
\address{Institute of Mathematical Sciences\\ ShanghaiTech University\\ 393 Middle Huaxia Road, Pudong New District, Shanghai, China, 201210.}
\email{tujw.at.shanghaitech.edu.cn}
\begin{document}

	\maketitle
	
\begin{abstract}
	We study categorical primitive forms for Calabi--Yau $A_\infty$ categories with semi-simple Hochschild cohomology. We classify these primitive forms in terms of certain grading operators on the Hochschild homology. We use this result to prove that, if the Fukaya category ${{\sf Fuk}}(M)$ of a symplectic manifold $M$ has semi-simple Hochschild cohomology, then its genus zero Gromov--Witten invariants may be recovered from the $A_\infty$-category ${{\sf Fuk}}(M)$ together with the closed-open map. An immediate corollary of this is that in the semi-simple case, homological mirror symmetry implies enumerative mirror symmetry.
\end{abstract}

	\maketitle

\section{Introduction}

\subsection{{ Kontsevich's proposal}} In his ICM talk~\cite{Kon}, Kontsevich proposed the celebrated  {\em homological mirror symmetry conjecture} which predicts an equivalence between two $A_\infty$-categories, one constructed from symplectic geometry and the other from complex geometry. More precisely, there should exist a quasi-equivalence of $A_\infty$-categories
\[ {\sf tw}^\pi({\sf Fuk}(X)) \cong D^b_{dg}({\sf Coh}(Y)),\]
between the split-closed triangulated envelope of the Fukaya category of a compact Calabi--Yau manifold $X$ and a dg-enhancement of the bounded derived category of coherent sheaves on a mirror dual Calabi--Yau manifold $Y$. 
In the same article, Kontsevich also suggested that by studying variational Hodge structures associated to these $A_\infty$-categories, homological mirror symmetry conjecture may be used to prove {\em enumerative mirror symmetry}. By enumerative mirror symmetry, we mean an equality between the Gromov--Witten invariants of $X$ and some period integrals of a holomorphic volume form on $Y$, as was first discovered by physicists~\cite{COGP}.  Both versions of mirror symmetry have since been extended to include manifolds which are not necessarily Calabi--Yau. For instance, the symplectic manifold $X$ could be a toric symplectic manifold, while its mirror dual is a Landau--Ginzburg model $(Y,W)$ given by  a space $Y$ together with a holomorphic function $W\in \Gamma(Y,\mO_Y)$. In this case, the category of coherent sheaves is replaced by the category of matrix factorizations ${\sf MF}(Y, W)$, and period integrals are replaced by Saito's theory of primitive forms~\cite{Sai2, Sai1} for the singularities of $W$.

Genus zero Gromov--Witten invariants, period integrals and Saito's theory of primitive forms all  determine a Frobenius manifold. Therefore Kontsevich's proposal can be realized by having a natural construction that associates a Frobenius manifold to an $A_\infty$-category, such that when applied to ${\sf Fuk}(X)$ it would reproduce the geometric
Gromov--Witten invariants of $X$ and when applied to ${\sf MF}(Y, W)$ it would reproduce Saito's invariants from primitive form theory. 

To carry out such construction one needs to restrict to saturated (meaning smooth, proper and compactly generated) Calabi--Yau (CY) $A_\infty$-categories. Then we proceed in two steps depicted in the following diagram:
\[\begin{tikzpicture}[box/.style={draw,rounded corners,text width=2.5cm,align=center}]
\node[box] at (-4,0) (a) {Saturated CY $A_\infty$-category};
\node[box] at (0,0) (b) {VSHS};
\node[box] at (4,0) (d) {Frobenius manifold};
\draw[thick,->] (a) -- (b) node[midway,above]{A};
\draw[thick,->] (b) -- (d) node[midway,above]{B};
\end{tikzpicture}\] 
In the middle box, VSHS stands for Variational Semi-infinite Hodge Structure, an important notion introduced by Barannikov~\cite{Bar, Bar2},
generalizing Saito's framework of primitive forms~\cite{Sai2, Sai1}. 

Step A in the diagram is well understood thanks to the works of Getzler~\cite{Get}, Katzarkov--Kontsevich--Pantev~\cite{KKP}, Kontsevich--Soibelman~\cite{KS}, Shklyarov~\cite{Shk, Shk2} and Sheridan~\cite{She}. In this step, one associates to a saturated CY $A_\infty$-category $\CC$, a VSHS $\mathcal{V}^\CC$ on the negative cyclic homology $HC^-_\bullet(\mathfrak{C})$ with $\mathfrak{C}$  a versal deformation of $\CC$. For the deformation theory of $\CC$ to be un-obstructed, one needs the non-commutative Hodge-to-de-Rham spectral sequence of $\CC$ to degenerate, which according to a conjecture of Kontsevich--Soibelman is always the case for saturated $A_\infty$-categories.

Step B depends on some choices and is less well understood in general. In the case when the VSHS is defined over a one dimensional base and is of {{\sl Hodge-Tate}} type, Ganatra--Perutz--Sheridan~\cite{GPS} obtained a partial resolution of Step B. Remarkably, this is already enough to recover the predictions of~\cite{COGP} for quintic $3$-folds.

\subsection{ The semi-simple case} Here we want to understand the above construction, when one takes as input data, a saturated CY $A_\infty$-category whose Hochschild cohomology ring is semi-simple. This situation includes several interesting examples: categories of matrix factorizations ${\sf MF}(Y, W)$, when $W$ has Morse singularities; and Fukaya categories of many Fano manifolds, including ${\sf Fuk}(\mathbb{CP}^n)$. These type of categories differ significantly from Ganatra--Perutz--Sheridan's setup. The base of the VSHS is not $1$-dimensional, and even after restricting to a $1$-dimensional base, it is not of Hodge-Tate type. The key difference, which may look like a technicality at first glance, is that these categories, for example ${\sf Fuk}(\mathbb{CP}^n)$, are only $\Z/2\Z$-graded, not $\Z$-graded.

In the semi-simple case, Step A is relatively easy since semi-simplicity implies that the Hochschild homology is purely graded (see Corollary \ref{cor:hh_even}), which in turn implies degeneration of the Hodge-to-de-Rham spectral sequence by degree reasons.

We first address Step B in a general setup. Let $\CC$ be a saturated $\Z/2\Z$-graded CY $A_\infty$-category and $\mathcal{V}^\CC$ be its associated VSHS. It turns out that in order to obtain a Frobenius manifold from $\mathcal{V}^\CC$, an additional piece of data is needed: a choice of a primitive element in $\mathcal{V}^\CC$ satisfying a few properties detailed in Definition~\ref{defi:primitive}. We refer to articles~\cite{Sai1, Sai2} for the origin of primitive forms, and to the more recent work~\cite{CLT} for its categorical analogue. Following~\cite{Sai2}, we call these primitive elements {{\em categorical primitive forms}}. The definition of categorical primitive forms is quite involved. In particular, it is not easy to prove general existence results. We first classify categorical primitive forms in terms of some particular splittings of the Hodge filtration.

\medskip

\begin{thm}~\label{thm:main}
Let $\CC$ be a saturated CY $A_\infty$-category. Assume that the Hodge-to-de-Rham spectral sequence for $\CC$ degenerates. Then there is a naturally defined bijection between the following two sets:
\begin{itemize}
\item[(1)] Categorical primitive forms of $\mathcal{V}^\CC$.
\item[(2)] Good splittings of the Hodge filtration compatible with the CY structure. See Definition~\ref{defi:splitting} for details.
\end{itemize}
\end{thm}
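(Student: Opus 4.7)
The plan is to construct the bijection explicitly in both directions and check that each axiom for a categorical primitive form is exchanged with the corresponding axiom for a good CY-compatible splitting. The key observation is that a splitting $H \subset HC^-_\bullet(\mathfrak{C})$ of the Hodge filtration gives a trivialization $HC^-_\bullet(\mathfrak{C}) \cong H[[u]]$ as $\mathbb{C}[[u]]$-modules, under which the VSHS data (Gauss--Manin connection, grading operator, and higher residue pairing) takes a standard form; from such a trivialization one can read off a distinguished section, and conversely a primitive form can be used to build its own trivialization.

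First I would carry out the direction from splittings to primitive forms. Given a good CY-compatible splitting $H$, I would define $\zeta$ to be the section corresponding, under the trivialization $HC^-_\bullet(\mathfrak{C}) \cong H[[u]]$, to a distinguished element $\bar\zeta \in H$ which lifts the class of the categorical volume form in $HH_\bullet(\CC)$. Primitivity, namely that $X \mapsto [\nabla_X \zeta]_{u=0}$ is an isomorphism $T_\mathfrak{C} \to HH_\bullet$, would follow from versality of $\mathfrak{C}$ combined with the fact that $H$ splits the $u$-filtration at $u = 0$. The homogeneity axiom for $\zeta$ with respect to the Euler vector field on the base is where the \emph{good} condition is used: the compatibility of $H$ with the grading operator on $HC^-$ translates, modulo the $u \partial_u$ term in the grading, into an eigenvector equation for $\zeta$. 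Finally, the orthogonality axioms with respect to the higher residue pairing would follow from the CY-compatibility clause, which essentially says $H$ is isotropic for the appropriate pairing induced on $H$.

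Next I would construct the inverse direction. Starting from a categorical primitive form $\zeta$, the primitivity axiom guarantees that $\zeta$ together with its covariant derivatives $\nabla_{X_1}\cdots \nabla_{X_k}\zeta$ along the base, reduced mod $u$, generate $HH_\bullet(\mathfrak{C})$. I would define $H$ to be the $\mathbb{C}$-linear span (at a chosen basepoint) of an appropriate set of such sections chosen so as to reproduce, via the trivialization they generate, the section $\zeta$ itself. Equivalently, $H$ can be described as the image of $\mathrm{Jac}(\zeta)$ under the isomorphism of sheaves induced by Kodaira--Spencer. Goodness of $H$ (stability under the grading operator) follows from the homogeneity of $\zeta$; CY-compatibility follows from its orthogonality properties. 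After constructing both maps one must check they are mutually inverse, but both are canonical once the trivialization framework is set up, so this reduces to tracking the distinguished element through the two procedures.

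The main obstacle, I expect, lies in matching the grading conditions on both sides. The grading operator on $HC^-$ involves a term $u \partial_u$, so the notion of $H$ being stable under the grading mixes the splitting with the $u$-direction in a subtle way; correspondingly the homogeneity axiom for $\zeta$ is not a pointwise eigenvalue equation but involves the Euler vector field on the base of $\mathfrak{C}$ and a shift in $u$-weight. Reconciling these two requires carefully computing how the grading acts in the trivialization $H[[u]]$ and verifying that the distinguished section $\zeta$ picks up exactly the correct eigenvalue. A secondary technical point is that the higher residue pairing is $(-u)$-sesquilinear, so the ``Lagrangian'' aspect of CY-compatibility must be verified at the level of symbols rather than of pairings; this is likely the most delicate algebraic check, but purely formal once set up correctly.
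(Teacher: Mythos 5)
Your outline of the direction ``primitive form $\Rightarrow$ splitting'' essentially matches the paper: one sets $s(b_j):=\big(u\nabla^{\sf Get}_{\partial/\partial t_j}\zeta\big)|_{t=0}$ and checks the axioms, with primitivity of $\omega=D(\bone)$ giving (S1), (P2) giving the Lagrangian condition, and (P4) plus flatness giving homogeneity and $\omega$-compatibility. The problem is the other direction, and there your proposal has a genuine gap. A good splitting is data \emph{only at the central fiber}: a map $s:HH_\bullet(\CC)\to HC^-_\bullet(\CC)$. Your ``trivialization $HC^-_\bullet(\mathfrak{C})\cong H[[u]]$'' either lives only at $t=0$ (in which case it produces no section over the formal base ${\sf Spec}(R)$, so you cannot ``read off'' $\zeta$) or is tacitly assumed over the whole family, in which case you have assumed the hard part. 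The actual mechanism, going back to Saito, is: parallel transport ${\sf Im}(s)$ using the Getzler--Gauss--Manin connection --- and because that connection has a simple pole at $u=0$, the flat extensions live in \emph{periodic} cyclic homology $HP_\bullet(\mathfrak{C})$, not in $HC^-_\bullet(\mathfrak{C})$ --- to obtain a $\nabla$-flat complement $\bigoplus_{k\geq1}u^{-k}R\otimes{\sf Im}(s)^{\sf flat}$ of $HC^-_\bullet(\mathfrak{C})$ inside $HP_\bullet(\mathfrak{C})$; then define $\zeta$ as the projection of $s(\omega)^{\sf flat}$ onto $HC^-_\bullet(\mathfrak{C})$ along this complement (all done $\mathfrak{m}$-adically, level by level). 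The verifications of (P2)--(P4) then come from intersecting the resulting $R[u^{-1}]$-type expansions with the a priori membership in $R[[u]]$. None of this is visible in your sketch, and without it there is no candidate $\zeta$.

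A second, smaller gap: you dismiss the check that the two maps are mutually inverse as ``tracking the distinguished element.'' The direction $\Psi\Phi=\id$ is not formal: one must show that the $\nabla$-flat subbundle generated by the central-fiber values $(u\nabla_{\partial/\partial t_j}\zeta)|_{t=0}$ coincides with the subbundle spanned by the sections $u\nabla_{\partial/\partial t_j}\zeta$ themselves. The paper proves this by showing the latter is also preserved by the Getzler connection, using (P3) together with non-degeneracy of the higher residue pairing to kill the non-negative $u$-powers of $\nabla_{\partial_{t_i}}\nabla_{\partial_{t_j}}\zeta$ in the induced decomposition. That argument (or an equivalent one) needs to appear.
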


When Hochschild cohomology ring is semi-simple, we can classify good splittings, and therefore categorical primitive forms, in terms of certain linear algebra data.

\medskip

\begin{thm}~\label{thm:main2}
Let $\CC$ be a saturated CY $A_\infty$-category, such that $HH^\bullet(\CC)$ is semi-simple. Then the Hodge-to-de-Rham spectral sequence for $\CC$ degenerates and the two sets in Theorem \ref{thm:main} are also naturally in bijection with the following set
\begin{itemize}
\item[(3)] Grading operators on $HH_\bullet(\CC)$. See Definition~\ref{defi:grading} for details.
\end{itemize}
\end{thm}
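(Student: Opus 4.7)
The plan is to first establish degeneration of the Hodge-to-de-Rham spectral sequence, which lets us invoke Theorem \ref{thm:main} and reduces the statement to producing a natural bijection between good splittings (set (2)) and grading operators (set (3)).

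\emph{Degeneration.} As noted in the introduction (Corollary \ref{cor:hh_even}), semi-simplicity of $HH^\bullet(\CC)$ forces $HH_\bullet(\CC)$ to be concentrated in a single $\Z/2\Z$-degree, by combining the $HH^\bullet$-module structure on $HH_\bullet$ with Calabi--Yau duality. Every differential $d_r$ in the Hodge-to-de-Rham spectral sequence shifts parity (since $u$ has degree $2$), so all higher differentials vanish and the spectral sequence degenerates. This yields the first claim and makes Theorem \ref{thm:main} applicable.

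\emph{From grading operator to splitting.} Given a grading operator $Gr$ on $HH_\bullet(\CC)$, I would build a splitting $s_{Gr} : HH_\bullet(\CC) \to HC^-_\bullet(\CC)$ by recursion in powers of $u$. The Getzler--Gauss--Manin connection $\nabla_{u\partial_u}$ on $HC^-_\bullet(\CC)$ takes the form $u\partial_u + B$, where $B$ is an endomorphism determined by the Euler class of $\CC$ acting by cup product. Writing $s_{Gr}(\alpha) = \alpha + u\alpha_1 + u^2\alpha_2 + \cdots$, the requirement that the connection be translated into $u\partial_u + Gr$ in the trivialization induced by $s_{Gr}$ yields, at each order $n \geq 1$, an equation of the form
\[
(Gr - n\cdot \mathrm{id})(\alpha_n) = F_n(\alpha_0, \ldots, \alpha_{n-1}, B).
\]
Here semi-simplicity is essential: it produces a simultaneous eigenspace decomposition of $HH_\bullet(\CC)$ under the diagonalizable action of the semi-simple ring $HH^\bullet(\CC)$, in which both $Gr$ and $B$ can be controlled, so that the above equations are solvable with unique solutions (after the obvious resonance adjustments coming from $B$). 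One then checks that $s_{Gr}$ defines a good splitting compatible with the CY structure, essentially because $Gr$ by definition respects the Mukai pairing.

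\emph{From splitting to grading, and inverseness.} Conversely, a good splitting $s$ compatible with the CY structure yields an identification $HC^-_\bullet(\CC) \cong HH_\bullet(\CC) \otimes k[[u]]$, under which the connection $\nabla_{u\partial_u}$ acquires the form $u\partial_u + Gr_s(u)$. The good-splitting hypothesis forces $Gr_s(u)$ to be $u$-independent, while the CY-compatibility forces it to satisfy the axioms of Definition \ref{defi:grading}. The assignments $Gr \mapsto s_{Gr}$ and $s \mapsto Gr_s$ are tautologically mutual inverses, completing the bijection.

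\emph{Main obstacle.} The subtle point is the construction $Gr \mapsto s_{Gr}$: the recursion involves inverting the operators $Gr - n \cdot \mathrm{id}$ (up to corrections coming from $B$), and in general resonances between eigenvalues of $Gr$ and positive integers could obstruct both existence and uniqueness. Semi-simplicity of $HH^\bullet(\CC)$ is precisely what tames these resonances, since it splits the inductive step into independent finite-dimensional invertible linear problems on the spectral pieces. Verifying that the resulting splitting is actually \emph{good} (i.e.\ satisfies the stronger constraints of Definition \ref{defi:splitting} and not merely that of a filtration splitting) is where the bulk of the technical work should lie.
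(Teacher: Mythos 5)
There is a genuine gap, and it starts with the form of the $u$-connection. In this $\Z/2\Z$-graded, curved setting the connection is not ``$u\partial_u + B$ with $B$ regular'': by Equation (\ref{eq:connection}) it is $\nabla_{u\frac{d}{du}} = u\frac{d}{du} + \tfrac{M}{2} + u^{-1}\xi$, with a first-order pole $u^{-1}\xi$ ($\xi$ being cap product with the curvature $\m_0$, i.e.\ the critical-value operator) plus a regular part $M=\Gamma-\mathcal{L}_Q$. Two consequences. First, the technical heart of the paper's argument is the vanishing theorem $M=0$ on $HH_\bullet$ in the semi-simple case (Theorem \ref{thm:vanishing}, which needs $[\eta]=0$, the derivation property and vanishing of $\check M$, and the anti-symmetry of $M$); this is what produces the canonical flat splitting $s^\CC$ solving $\nabla_{u\frac{d}{du}}s^\CC(\alpha)=u^{-1}s^\CC(\xi\alpha)$, relative to which every other splitting is expanded. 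Your proposal never addresses this, and without it there is no base point for your power-series ansatz. Second, because of the pole, the order-by-order equation is not $(Gr-n)(\alpha_n)=F_n$ but the $R$-matrix recursion $[\xi,R_{k+1}]=R_k(\mu-k)$. The operator being inverted is $\mathrm{ad}_\xi$, whose kernel is the block-diagonal part with respect to the eigenvalues $\lambda_i$ of $\xi$; the off-diagonal entries of $R_{k+1}$ are obtained by dividing by $\lambda_i-\lambda_j$, and the diagonal entries are only determined by looking at the \emph{next} order, where condition (b) of Definition \ref{defi:grading} ($\mu$ vanishes on the diagonal blocks of $\xi$) is exactly what makes the system consistent and uniquely solvable. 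Your worry about ``resonances between eigenvalues of $Gr$ and positive integers'' is aimed at the wrong operator, and your proposed resolution (semi-simplicity diagonalizing $Gr$) does not address the actual obstruction, which is the kernel of $\mathrm{ad}_\xi$.

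Two further points are glossed over. The verification that $s^\mu$ satisfies the Lagrangian condition S2 is not automatic from ``$Gr$ respects the Mukai pairing'': it is an inductive argument showing $P_n=\sum_j(-1)^jR_j^tR_{n-j}=0$, which again uses both the anti-symmetry $\mu^t=-\mu$ and condition (b) to kill the diagonal-block ambiguity. And in the direction splitting $\mapsto$ grading, property (b) of $\mu^s$ is not tautological either; it is read off from the $k=0$ case $[\xi,R_1]=\mu^s$ of the same recursion. The degeneration step of your proposal is fine and matches Corollary \ref{cor:hh_even}.
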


Hence we obtain for each grading operator $\mu$ on $HH_\bullet(\CC)$, a categorical primitive form $\zeta^\mu\in \mathcal{V}^{\CC}$. Then the primitive form $\zeta^\mu$ defines a homogeneous Frobenius manifold $\mathcal{M}_{\zeta^\mu}$ on the formal moduli space ${{\sf Spec}} (R)$ parameterizing formal deformations of the $A_\infty$-category $\CC$. In Section 5 we will describe this Frobenius manifold. 

In general, there exists no canonical choice of categorical primitive forms. However, if $HH^\bullet(\CC)$ is semi-simple, we prove that there exists a canonical categorical primitive form which corresponds to the zero grading operator on $HH_\bullet(\CC)$ through the bijection in the above theorem. The corresponding Frobenius manifold $\mathcal{M}_{\zeta^0}$ is constant (see discussion at the end of Section 4).

\subsection{Applications to Fukaya categories and mirror symmetry} We may apply Theorem~\ref{thm:main2} to the Fukaya category ${\sf Fuk}(M)$ with semi-simple Hochschild cohomology. In particular, this includes the projective spaces $\mathbb{CP}^n$, as well as generic toric Fano manifolds. We want to produce a Frobenius manifold $\mathcal{M}_{\zeta^\mu}$ isomorphic to the Frobenius manifold determined by the genus zero Gromov--Witten invariants of $M$, known as the {\em big quantum cohomology} of $M$. To choose the grading $\mu$ that does this, we need an extra piece of geometric information: the closed-open map from the small quantum cohomology to the Hochschild cohomology of the Fukaya category
\[ \mathcal{CO}: QH^\bullet(M)\ra HH^\bullet\big({\sf Fuk}(M)\big). \]
Under some conditions, detailed in Assumption \ref{ass:co}, we can use $\mathcal{CO}$ and the duality map 
\[D:HH^\bullet\big({\sf Fuk}(M)\big)\ra HH_\bullet\big({\sf Fuk}(M)\big) \;\; \mbox{(see Equation~\ref{def:D})},\] 
to pull-back the integral grading operator on $QH^\bullet(M)$ to a grading operator denoted by $\mu^\CO$ on the Hochschild homology $HH_\bullet\big({\sf Fuk}(M)\big)$. This corresponds, using Theorems~\ref{thm:main} and \ref{thm:main2}, to a categorical primitive form denoted by $\zeta^{\CO}\in \mathcal{V}^{{\sf Fuk}(M)}$. 

\medskip
\begin{thm}~\label{thm:fukaya}
Let $M$ be a symplectic manifold such that its Fukaya category ${\sf Fuk}(M)$ and the closed-open map $\CO$ satisfy Assumption \ref{ass:co}. Assume furthermore that $HH^\bullet \Big( {\sf Fuk}(M) \Big)$ is semi-simple. Then the big quantum cohomology of $M$ is isomorphic to the formal Frobenius manifold $\mathcal{M}_{\zeta^{\CO}}$ associated with the pair $\Big( \mathcal{V}^{{\sf Fuk}(M)}, \zeta^\CO\Big)$. In particular, the category ${\sf Fuk}(M)$ together with the closed-open map $\mathcal{CO}$ determine the big quantum cohomology of $M$.
\end{thm}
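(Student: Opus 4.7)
The plan is to identify the two formal Frobenius manifolds---the big quantum cohomology of $M$ and $\mathcal{M}_{\zeta^{\CO}}$---by matching their Frobenius data at the origin and then invoking the reconstruction theorem for semi-simple conformal Frobenius manifolds. Write $\CC = D^b\big({\sf Fuk}(M)\big)$. Theorem~\ref{thm:main2} produces the primitive form $\zeta^{\CO}$, and hence the Frobenius manifold $\mathcal{M}_{\zeta^{\CO}}$, whose base is the formal neighborhood of $0$ in $HH^\bullet(\CC)$. Under Assumption~\ref{ass:co}, the closed-open map $\CO:QH^\bullet(M)\to HH^\bullet(\CC)$ is a unital ring isomorphism, giving a canonical identification of the two bases, so the remaining task is to match the Frobenius structures through this identification.

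At the origin, I would check the four pieces of Frobenius data separately. The unit is $1\in HH^\bullet(\CC)$ on the categorical side, which maps to $\bone\in QH^\bullet(M)$ under $\CO^{-1}$. The tangent product at $0$ on $\mathcal{M}_{\zeta^{\CO}}$ is the cup product on $HH^\bullet(\CC)$, which equals the small quantum product via the ring map $\CO$. The Frobenius metric on $\mathcal{M}_{\zeta^{\CO}}$ comes from the Mukai pairing on $HH_\bullet(\CC)$, transported to $HH^\bullet(\CC)$ via the CY duality $D$, and Assumption~\ref{ass:co} is precisely the compatibility of $\CO$ and $D$ with the Poincar\'e pairing on $QH^\bullet(M)$. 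Finally, $\mu^{\CO}$ is by definition the pull-back under $D\circ\CO$ of the integral grading operator on $QH^\bullet(M)$, so the Euler vector field of $\mathcal{M}_{\zeta^{\CO}}$ built from $\mu^{\CO}$ corresponds to the Euler vector field of big quantum cohomology. This is the sole purpose of the choice $\mu=\mu^{\CO}$: among all grading operators, it is the unique one reproducing the conformal structure of $QH^\bullet(M)$.

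To pass from agreement at the origin to an isomorphism of formal Frobenius manifolds, I would invoke the classical reconstruction theorem for conformal semi-simple Frobenius manifolds: a germ of such a structure is determined by its product, metric, and Euler field at a single semi-simple point. Semi-simplicity of $HH^\bullet(\CC)$, transported through $\CO$, makes both Frobenius manifolds semi-simple, and both are homogeneous by construction, so the theorem applies and forces the two manifolds to be isomorphic. The main obstacle is the first step: verifying that the abstract assignment $\mu\mapsto\mathcal{M}_{\zeta^\mu}$, when evaluated at $\mu^{\CO}$, does produce, through the bijections of Theorems~\ref{thm:main} and~\ref{thm:main2}, a Frobenius structure on $HH^\bullet(\CC)$ whose product, metric and Euler field restrict at $0$ to the structures coming from $QH^\bullet(M)$ via $\CO$ and $D$. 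Tracking a grading operator through the chain from good splitting to primitive form to Frobenius manifold, and checking that each layer is compatible with the closed-open map under Assumption~\ref{ass:co}, is where the work sits; everything else follows from the reconstruction principle.
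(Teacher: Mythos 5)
Your proposal matches the paper's proof in both structure and substance: identify the Frobenius algebra at the origin of $\mathcal{M}_{\zeta^{\CO}}$ with $QH^\bullet(M)$ via $\CO$ and the duality $D$ (this is Proposition~\ref{prop:frobenius}(1)), match the Euler vector fields using the formula for $\Eu$ in terms of $\xi$ and $\mu^{\CO}$, and conclude by the Dubrovin--Teleman reconstruction theorem for semi-simple conformal Frobenius manifolds. The one step you leave implicit but that the paper carries out explicitly is checking that $\mu^{\CO}$ really is a grading operator in the sense of Definition~\ref{defi:grading} — in particular condition (b), which requires Assumption~\ref{ass:co}(3) together with Teleman's observation that the integral grading on $QH^\bullet(M)$ vanishes on the diagonal blocks of $c_1(M)\star(-)$.
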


As we will see in Section~\ref{sec:fukaya}, compact toric manifolds for which the potential function $\mathfrak{PO}$ (in the notation of \cite{FOOO}, sometimes also called Landau--Ginzburg potential) has only non-degenerate critical points are examples of symplectic manifolds for which the above theorem applies to. For example, in the case when $M$ is toric Fano, the potential $\mathfrak{PO}$ has non-degenerate critical points for a generic choice of symplectic form invariant under the torus action (see \cite[Proposition 8.8]{FOOO1}).

We would like to point out, that in our proof of Theorem \ref{thm:main2} we calculate the $R$-matrix of the Frobenius manifold associated to the grading $\mu$. Therefore, using the Givental group action (see \cite{P}) and Teleman's reconstruction theorem \cite{T} we should be able to recover the higher genus Gromov--Witten invariants from $\mu^\CO$ as well. We will investigate this further in the future.

An immediate corollary of the above theorem is a realization of Kontsevich's original proposal that his homological mirror symmetry implies enumerative mirror symmetry in the case of semi-simple saturated CY $A_\infty$-categories. See Corollary~\ref{cor:mirror} for a precise statement.

\subsection{Organization of the paper} In Section~\ref{sec:hoch} and Appendix~\ref{app:duality}, we recall and prove some basic properties of Hochschild invariants of (cyclic) $A_\infty$-categories. In Section~\ref{sec:splitting} we prove there exists a bijection between the sets $(2)$ and $(3)$ in Theorem~\ref{thm:main}. In Section~\ref{sec:primitive} we prove the bijection between the sets $(1)$ and $(2)$, thus finishing the proof of Theorem~\ref{thm:main}. In Section~\ref{sec:fukaya}, we apply the results to Fukaya categories and prove Theorem~\ref{thm:fukaya}. 

\subsection{Conventions and Notations} We shall work with $\Z/2\Z$-graded $A_\infty$-algebras (or categories) over a field $\mathbb{K}$ of characteristic zero. If $A$ is such an $A_\infty$-algebra (or category), denote by
\[\m_k: A^{\otimes k} \ra A, \; (k\geq 0)\]
its structure maps. We use the notations $|-|$ and $|-|'$ to denote the degree in $A$, or its shifted degree in $A[1]$ respectively, i.e. $|-|'=|-|+1 \pmod{2}$.

\subsection{Acknowledgments} L. A. would like to thank Cheol-Hyun Cho for useful conversations about closed-open maps. J. T. is grateful to Andrei C\u ald\u araru and Si Li for useful discussions around the topic of categorical primitive forms.  We would also like to thank Nick Sheridan for very helpful discussions about sign conventions for the Mukai pairing. We thank an anonymous referee for helpful suggestions.
 
\section{Hochschild invariants of $A_\infty$-categories}~\label{sec:hoch}

In this section we recall some basic facts about Hochschild invariants of $A_\infty$-algebras and categories, establishing the notation and conventions (mostly following \cite{She}) and setting up the technical framework for the remainder of the paper.

\subsection{Hochschild (co)chain complexes} All our $A_\infty$-categories will be $\Z/2\Z$-graded and strictly unital. We shall work with a weakly curved $A_\infty$-category $\mathcal{C}$ which decomposes as 
$$\mathcal{C} = \prod_{i=1}^l \mathcal{C}_{\lambda_i},$$
with finitely many scalars $\lambda_1,\ldots,\lambda_l\in \mathbb{K}$. Each category $\mathcal{C}_{\lambda_i}$ is weakly curved with 
 $$\m_0(X)=\lambda_i\bone_X, \;\; \forall X\in Ob(\mathcal{C}_{\lambda_i})$$ 
where $\bone_X$ denotes the identity morphism of the object $X$. 

We also assume that each $\mathcal{C}_{\lambda_i}$ is saturated and has a Calabi--Yau structure. Therefore we can assume,  after taking a minimal model, they have finite dimensional hom spaces and have a cyclic pairing \cite{KS}. Being saturated also implies that $\mathcal{C}_{\lambda_i}$ has a split-generator $X_i$, we denote its endomorphism $A_\infty$-algebra by $A_i$. 
Therefore we can consider $\mathcal{C}$ as a direct sum $\bigoplus_{i=1}^l A_i$ of $\Z/2\Z$-graded, smooth and finite-dimensional cyclic $A_\infty$-algebras of parity $d\in\Z/2\Z$. 
 By cyclic we mean there is a non-degenerate pairing $\langle -,-\rangle:A_i^{\otimes 2}\to \mathbb{K}$ satisfying
\[\langle a, b\rangle= (-1)^{|a|'|b|'}\langle b, a \rangle, \ \ \langle\m_k(a_0, \ldots, a_{k-1}),a_k\rangle=(-1)^{\heartsuit}\langle \m_k(a_1, \ldots, a_{k}),a_0\rangle,
\]
where $\heartsuit=|a_0|'(|a_1|'+\ldots+|a_k|')$. Moreover, $\langle a,b\rangle=0$ if $|a|+|b|\neq d$.

All the invariants we will consider, Hochschild (co)homology, negative cyclic homology and periodic cyclic homology for $\mathcal{C}$ are isomorphic to the direct sum of the corresponding invariants for each $A_\infty$-algebra $A_i$ \cite{She}. Therefore, for simplicity, we will restrict ourselves to $A_\infty$-algebras. For future reference, we summarize our $A_\infty$-algebra setup in the following condition

\medskip

$(\dagger)$ {{\sl Let $A$ be an $A_\infty$-algebra, with operations $\m=\{\m_k\}_{k\geq 0}$. We assume $A$ is $Z/2\Z$-graded, strictly unital (unit denoted by $\bone$), smooth, finite-dimensional and cyclic of parity $d\in\Z/2\Z$. We also assume that  $A$ is weakly curved:  $\m_0=\lambda \bone$. }}

\medskip

Since we allow for curvature terms in our $A_\infty$-algebras, we must be careful when defining the Hochschild (co)homology.

\begin{defi}
Let $A$ be as in $(\dagger)$. We define the Hochschild chain complex
$$CC_\bullet(A):=\bigoplus_{k \geq 0} A \otimes (A/\mathbb{K} \cdot \bone )^{\otimes k},$$
with the grading $|\alpha_0\otimes \alpha_1 \ldots \alpha_r|:= |\alpha_0|+ |\alpha_1|' +  \ldots |\alpha_r|'$.
The Hochschild cochain complex is defined as
$$CC^\bullet(A):=\Hom\left( \bigoplus_{k\geq 0} (A/\mathbb{K} \cdot \bone )^{\otimes k}, A \right) ,$$
where $\varphi=\{\varphi_k\}_{k\geq 0}$ is homogeneous of degree $n$ if  $|\varphi_k|= n-k$.
In both cases the differentials are defined as in the uncurved case (\cite{She}). More explicitly, in the homology case the differential is given by 
\begin{align*} 
b(\alpha_0|\alpha_1 \ldots &\alpha_r) =  \sum_{ \substack{1\leq i \leq r\\ 1 \leq j \leq r-i+1}} (-1)^{\star}\alpha_0| \alpha_1 \ldots \m_j(\alpha_i,\ldots,\alpha_{i+j-1}) \ldots \alpha_r\\
 & + \sum_{0\leq i\leq j\leq r} (-1)^@ \m_{r-j+i+1}(\alpha_{j+1}, \ldots, \alpha_r,\alpha_0, \alpha_1, \ldots, \alpha_i)|\alpha_{i+1}\ldots \alpha_j,
\end{align*}
where $\star=|\alpha_{0,i-1}|':=|\alpha_0|'+\ldots |\alpha_{i-1}|'$ and $@=|\alpha_{0,j}|'|\alpha_{j+1,r}|'$.
\end{defi}

The complexes defined above are usually called the {\em reduced} Hochschild (co)chain complexes. In the uncurved case, these are quasi-isomorphic to the usual ones \cite{She}, but when there is curvature this is not true and in this case, the unreduced complexes give trivial homologies (see \cite{CT}). All the constructions and operations in the Hochschild (co)chain complexes we will consider in this paper behave in the reduced complexes exactly as they do in the unreduced ones, assuming strict unitality (which we always do). Therefore this will not play a role in the remainder of the paper.

We also note that the $A_\infty$ structure maps $\m_k \; (k\geq 0)$ are naturally Hochschild cochains. The direct product chain denoted by $\m= \prod_{k\geq 0} \m_k \in CC^\bullet(A)$ is in fact closed with respect to the Hochschild differential.

\begin{rmk}
	From now on we will use the symbol $@$ for the sign obtained, following the Koszul convention for the shifted degrees, from rotating the inputs of the expression from their original order. 
\end{rmk}

\subsection{Homotopy Calculus structures} The algebraic structures of the Hochschild pair $( CC^\bullet(A), CC_\bullet(A) )$ are extremely rich. Here we recall the ones which are most relevant to us:
\begin{itemize}
\item A differential graded Lie algebra (DGLA) structure on $CC^\bullet(A)[1]$. The Hochschild differential $\delta$ on the complex $CC^\bullet(A)$ is given by bracketing with the structure cochain, that is $\delta=[\m,-]$. 
\item A cup product $\cup: CC^\bullet(A)^{\otimes 2} \ra CC^\bullet(A)$ which is associative and commutative on cohomology. It is defined as $\varphi \cup \psi:=(-1)^{|\varphi|'}\m \{\varphi, \psi\}$, where $\m \{-,-\}$ is Getzler brace operation (denoted by $M^2$ in \cite{She}).
\end{itemize}
We need to introduce some more operators.
	\begin{itemize}
	\item[--] Given a Hochschild cochain $\varphi$ we define a new cochain $\mathcal{L}_\varphi$ by setting
	\begin{align*}
	\mathcal{L}_\varphi(\alpha_0 \ldots \alpha_n):=  \sum (-1)^{\star_1}& \alpha_0 \ldots \varphi(\alpha_{i+1}\ldots)\ldots \alpha_n \\& + \sum (-1)^{@} \varphi(\alpha_{j+1}\ldots \alpha_0 \ldots \alpha_i) \ldots \alpha_j
	\end{align*}
	where $\star_1= |\varphi|'(|\alpha_{0}|'+\ldots+|\alpha_i|')$ and as before $@= (|\alpha_{0}|'+\ldots+|\alpha_j|')(|\alpha_{j+1}|'+\ldots+|\alpha_n|')$.	Note that $b=\mathcal{L}_\m$.
\item[--] The Connes' differential
\[ B(\alpha_0|\alpha_1 \ldots | \alpha_r)= \sum (-1)^{@} \bone|\alpha_i,\ldots,\alpha_r,\alpha_0,\alpha_1,\ldots,\alpha_{i-1}.\]		
	
	\item[--] For a Hochschild cochain $\varphi$ we set
	\begin{align*}
	B\{\varphi\}(\alpha_0\ldots \alpha_n):= & \sum (-1)^{\star_2} \bone | \alpha_{j+1}\ldots \varphi(\alpha_{i+1}\ldots)\ldots \alpha_n, \alpha_0\ldots \alpha_j
	\end{align*}
	 where $\star_2=|\varphi|'(|\alpha_{j+1}'|+\ldots+|\alpha_i|')+@$. This operation is denoted by $B^{1,1}$ in \cite{She}.
	 \item[--] Given two Hochschild cochains $\varphi, \psi$ we define:
   \begin{align*}
   \mathcal{T}(\varphi, \psi)(\alpha_0 \ldots \alpha_n):= & \sum (-1)^{\star_3} \varphi(\alpha_{j+1} \ldots \psi(\alpha_{k+1}\ldots)\ldots \alpha_0 \ldots ) \alpha_{i+1}\ldots \alpha_j 
   \end{align*}
	where $\star_3= |\psi|'(|\alpha_{j+1}|'+\ldots+|\alpha_k|')+ @$. 
	
	\noindent Additionally we define $b\{\varphi\}:= \mathcal{T}(\m, \varphi)$.
		\end{itemize}

The Hochschild cochain complex $CC^\bullet(A)$ acts on $CC_\bullet(A)$ is two different ways:
\begin{itemize}
\item The assignment $\phi \mapsto \mathcal{L}_\phi$ defines a differential graded Lie module structure on $CC_\bullet(A)$ over the DGLA $CC_\bullet(A)[1]$.
\item The assignment $\phi\mapsto \varphi \cap (-) := (-1)^{|\varphi|}b\{\varphi\}(-)$, called cap product, defines a left module structure, with respect to the cup product, on $HH_\bullet(A)$, that is $(\varphi\cup \psi)\cap(-)=\varphi \cap( \psi\cap(-))$ on homology.
\end{itemize}

Next, note that $B^2=bB+Bb=0$, therefore we can define the differential $b+uB$ on $CC_\bullet(A)[[u]]$ (respectively $CC_\bullet(A)((u))$), where $u$ is a formal variable of even parity. The resulting homology $HC^-_\bullet(A)$ (respectively $HP_\bullet(A)$) is called the negative cyclic homology of $A$ (respectively periodic cyclic homology).

We set the extended cap product action on the periodic cyclic chain complex $CC_\bullet(A)((u))$ by 
\begin{equation}~\label{eq:iota}
\iota\{\varphi\}:=b\{\varphi\}+uB\{\varphi\}
\end{equation}
\begin{prop}\label{prop:HHformulas}
	We have the following identities
	\begin{enumerate}
		\item Cartan homotopy formula: $\displaystyle [b+uB, \iota\{\varphi\}] = -u \cdot \mathcal{L}_\varphi -\iota\{ [\m, \varphi] \}$
		\item Daletskii--Gelfand--Tsygan homotopy formula: $$\displaystyle b\{[\varphi, \psi]\}=(-1)^{|\varphi|'}[\mathcal{L}_\varphi, b\{\psi\}]- [b, \mathcal{T}(\varphi, \psi)] + \mathcal{T}([\m, \varphi], \psi) + (-1)^{|\varphi|'}\mathcal{T}(\varphi, [\m, \psi])$$
	\end{enumerate}
\end{prop}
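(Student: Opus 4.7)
The plan is to prove both identities as chain-level computations on $CC_\bullet(A)((u))$, exploiting the fact that every operator appearing -- $b$, $B$, $b\{\varphi\}$, $B\{\varphi\}$, $\mathcal{L}_\varphi$, and $\mathcal{T}(\varphi,\psi)$ -- is defined as a sum over combinatorial ``insertion patterns,'' in which the structure maps $\m$, $\varphi$, $\psi$ are applied to subsequences of the Hochschild chain $\alpha_0|\alpha_1|\ldots|\alpha_r$, sometimes after a cyclic rotation. Thus both identities reduce to giving sign-preserving bijections between the terms on the two sides, organized by which of $\m$, $\varphi$, $\psi$ acts where, and whether a fresh unit $\bone$ is inserted by a $B$-type operation.

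For part (1), I would expand both sides as polynomials in $u$. The $u^2$-component $[B, B\{\varphi\}]$ vanishes identically: both $B$ and $B\{\varphi\}$ prepend a fresh $\bone$ in the zeroth slot, so $B \circ B\{\varphi\}$ and $B\{\varphi\} \circ B$ produce chains beginning with $\bone|\bone|\ldots$, which are zero in the reduced complex. The $u^0$-identity $[b, b\{\varphi\}] = -b\{[\m,\varphi]\}$ is a Gerstenhaber-type identity for the brace $\mathcal{T}$: expand $b\{\psi\} = \mathcal{T}(\m,\psi)$, apply the pre-Lie relation satisfied by $\mathcal{T}$, and use the (curved) $A_\infty$-relation $[\m,\m] = 0$. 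The genuinely interesting $u^1$-component is verified by matching terms in $[b, B\{\varphi\}] + [B, b\{\varphi\}]$: the configurations where $\m$ acts entirely to one side of the inserted $\bone$ without ``crossing'' $\varphi$ assemble into $-\mathcal{L}_\varphi$, while the configurations where $\m$ is composed directly with $\varphi$ combine into $-B\{[\m,\varphi]\}$.

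Part (2) is the more elaborate bookkeeping. I would expand each of the five operators as a sum indexed by configurations of insertions of $\m$, $\varphi$, $\psi$ into the chain, and then partition these configurations by combinatorial type: whether $\varphi$ and $\psi$ are disjoint, one is nested inside the other, and whether $\m$ is composed with the outer or an inner operation. The configurations with $\psi$ nested inside $\varphi$ and $\m$ acting outside contribute $b\{[\varphi,\psi]\}$ via the pre-Lie identity for $\mathcal{T}$; disjoint configurations account for $[b,\mathcal{T}(\varphi,\psi)]$ and for $[\mathcal{L}_\varphi, b\{\psi\}]$; and the $\mathcal{T}([\m,\varphi],\psi)$ and $\mathcal{T}(\varphi,[\m,\psi])$ terms absorb precisely the configurations in which $\m$ is composed with $\varphi$ or $\psi$ before the outer $\mathcal{T}$. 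All remaining terms cancel in pairs produced by the involution that toggles the order of insertion of $\m$ relative to $\varphi$ and $\psi$.

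The main obstacle throughout is the careful tracking of Koszul signs under cyclic rotations and nested insertions of shifted operators; a single misplaced $(-1)^{|\varphi|'|\psi|'}$ can disguise an otherwise clean cancellation. To manage this cleanly I would work with ``universal'' formal symbols for $\m, \varphi, \psi$ -- treating the identity as one of multilinear operators on a free $A_\infty$-algebra -- so that all signs are forced by the shifted-degree Koszul rule and only at the end specialize to the actual structure maps. The weakly curved case requires only a minor check: since $\m_0 = \lambda\bone \in \mathbb{K}\cdot\bone$, all insertions of $\m_0$ into internal slots of the reduced complex vanish, and the only surviving contribution sits in the zeroth slot $\alpha_0$, where it interacts with the rest of $\m$ without perturbing either identity.
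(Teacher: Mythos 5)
The paper does not actually prove this proposition: it cites Getzler \cite{Get} for identity (1) and Fiorenza--Kowalzig \cite{FioKow} (see also \cite{OS}) for identity (2). So any written proof is formally a ``different route,'' but the route you sketch is precisely the one those references carry out. Your decomposition of (1) into $u$-components is correct: the $u^2$-component vanishes on the reduced complex because iterating $B$-type operators leaves a unit in an internal slot; the $u^0$-component $[b,b\{\varphi\}]=-b\{[\m,\varphi]\}$ is a brace/pre-Lie identity (indeed it is the specialization of (2) to $\varphi=\m$, using $[\m,\m]=0$, $b=\mathcal{L}_\m$ and $b\{\cdot\}=\mathcal{T}(\m,\cdot)$); and the $u^1$-component is the genuine Cartan computation. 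Your taxonomy of insertion configurations for (2) is likewise how \cite{FioKow} organizes the argument. The caveat is that what you have written is a proof plan rather than a proof: the assertions that certain families of configurations ``assemble into $-\mathcal{L}_\varphi$'' and that ``all remaining terms cancel in pairs'' are exactly the claims whose verification is the entire content of such an argument, and the deferred Koszul signs are where these verifications typically fail --- note that even the paper's own definitions of $B\{\varphi\}$ and $\mathcal{T}$ carry mismatched sign labels ($\star_2$ versus $\star_3$, $\star_4$ versus $\star_3$), which is symptomatic. Your handling of the curved term should also be slightly expanded: $\m_0=\lambda\bone$ landing in an internal output slot dies in the reduced complex, while $\m_0$ fed as an input into a higher $\m_k$, $\varphi$ or $\psi$ dies by strict unitality and reducedness of the cochains, so the uncurved term-matching goes through verbatim; this is the point the paper makes informally after defining the reduced complexes. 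With the sign bookkeeping actually executed (or with an appeal to \cite{Get} and \cite{FioKow}, as the authors choose to do), the argument is complete.
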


The identity $(1)$ is proved by Getzler~\cite{Get}. The identity $(2)$ is proved in~\cite{FioKow}, see also \cite{OS}.

\subsection{The duality isomorphism} The cyclic pairing $\langle-,-\rangle$ on $A$ has degree $d\in \Z/2\Z$. Therefore it induces an isomorphism $A\ra A^\vee[d]$ of $A_\infty$-bimodules, where $A^\vee$ denotes the linear-dual of $A$ endowed its natural bimodule structure. This yields an isomorphism
\[ HH^\bullet(A) \cong HH^{\bullet+d} (A,A^\vee).\]
The right hand side is the Hochschild cohomology with values in the bimodule $A^\vee$ which is further isomorphic to the shifted dual of the Hochschild homology $HH_{\bullet-d}(A)^\vee$. 

There is also a degree zero pairing naturally defined on Hochschild homology, known as the Mukai-pairing
\[ \langle-,-\rangle_{{\sf Muk}}: HH_\bullet(A)\otimes HH_\bullet(A) \ra \mathbb{K}.\]
Since $A$ is smooth and proper, this pairing is non-degenerate by Shklyarov~\cite{Shk}. Thus, it induces an isomorphism $HH_{\bullet}(A) \cong HH_{-\bullet}(A)^\vee$. Since we assume that $A$ is finite-dimensional the Mukai pairing can be described at the chain-level (see \cite[Proposition 5.22]{She}): for $\alpha=\alpha_0|\alpha_1|\ldots|\alpha_r$ and $\beta=\beta_0|\beta_1|\ldots|\beta_s$, we have
\begin{equation}\label{eq:mukai}
\langle \alpha, \beta\rangle_{{\sf Muk}}= \sum {\sf tr}\left[c \to (-1)^{\star_4}\m_*\left(\alpha_j,..,\alpha_0,..,\m_*(\alpha_i,..,c,\beta_n,.. ,\beta_0,..), \beta_m,.. \right)\right]
\end{equation} 
where ${\sf tr}$ stands for the trace of a linear map and 
\[ \star_4=1+ |c||\beta| +(|\alpha_j|'+..|\alpha_0|'+..+|\alpha_{i-1}|')+ @ \]
As before $@$ is the sign coming from rotating the $\alpha$'s and the $\beta$'s.

Putting together, we obtain a chain of isomorphisms:
\begin{equation}~\label{def:D}
D: HH^\bullet(A) \cong HH^{\bullet+d} (A, A^\vee)\cong HH_{\bullet-d}(A)^\vee \cong HH_{d-\bullet}(A)
\end{equation}
Using the isomorphism to pull-back the Mukai-pairing yields a pairing which we denote by
\[ D^*\langle -,-\rangle_{{\sf Muk}} : HH^\bullet(A)\otimes HH^\bullet(A) \ra \mathbb{K},\]
defined as $D^*\langle \varphi, \psi\rangle_{{\sf Muk}}= (-1)^{|\varphi|d}\langle D(\varphi) ,D(\psi)\rangle_{{\sf Muk}}$.

We have the following folklore result. 

\begin{thm}\label{thm:Dfolk}
	Let $A$ be a $\Z/2\Z$-graded, smooth and finite-dimensional cyclic $A_\infty$-algebra. 
	\begin{enumerate}
		\item[(a)] The isomorphism  $D: HH^\bullet(A) \to HH_{d-\bullet}(A)$, is a map of $HH^\bullet(A)$-modules, where the action on the left-hand side is by cup product and on the right by cap product.
		\item[(b)] The triple $\big(HH^\bullet(A),\cup,D^*\langle -, -\rangle_{{\sf Muk}}\big)$ forms a Frobenius algebra.
	\end{enumerate}
\end{thm}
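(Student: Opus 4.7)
The plan is to reduce both claims to a single adjointness identity for cap product with respect to the Mukai pairing, and then exploit the fact that $D$ is a composition of three maps, two of which are manifestly $HH^\bullet(A)$-equivariant. Explicitly, factor
\[ HH^\bullet(A) \xrightarrow{D_1} HH^{\bullet+d}(A^\vee) \xrightarrow{D_2} HH_{\bullet-d}(A)^\vee \xrightarrow{D_3} HH_{d-\bullet}(A). \]
The map $D_1$ is induced by the $A_\infty$-bimodule quasi-isomorphism $A \simeq A^\vee[d]$ coming from the cyclic pairing, so by naturality it is a morphism of $HH^\bullet(A)$-modules. The map $D_2$ is the standard ``tensor--hom'' duality between Hochschild cochains valued in the dual bimodule and the $\mathbb{K}$-dual of chains; under $D_2$ the cup product transposes to the dual of the cap product, so $D_2$ is $HH^\bullet(A)$-equivariant as well. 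Finally, $D_3$ is dualization via the Mukai pairing, and it is $HH^\bullet(A)$-equivariant precisely when cap product is self-adjoint under $\langle-,-\rangle_{\sf Muk}$, i.e.
\[ \langle \varphi \cap \alpha, \beta\rangle_{\sf Muk} \;=\; (-1)^{|\varphi||\alpha|}\,\langle \alpha, \varphi \cap \beta\rangle_{\sf Muk}. \]

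So the crux of part (a) is this adjointness identity, which I would prove at the chain level using the explicit trace formula (\ref{eq:mukai}) together with the chain-level description $\varphi \cap (-) = (-1)^{|\varphi|}b\{\varphi\}(-)$. Both sides expand into sums of traces of endomorphisms built from the structure maps $\m_\ast$ with a single insertion of $\varphi$; the invariance of the trace under cyclic permutation, combined with the cyclic symmetry of $\m_\ast$ encoded by the pairing $\langle-,-\rangle$, should match these sums term by term up to an explicit Hochschild boundary. Once adjointness is in hand, part (a) follows from the factorization of $D$ described above.

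For part (b), symmetry of $D^*\langle-,-\rangle_{\sf Muk}$ follows directly from graded symmetry of the Mukai pairing combined with the sign twist $D^*\langle \varphi,\psi\rangle_{\sf Muk} = (-1)^{|\varphi|d}\langle D(\varphi),D(\psi)\rangle_{\sf Muk}$ and the degree shift $|D(\varphi)| = d - |\varphi|$. Non-degeneracy is immediate from non-degeneracy of the Mukai pairing (Shklyarov) together with the fact that $D$ is an isomorphism. The Frobenius identity then combines part (a) with the adjointness lemma:
\[ D^*\langle \varphi\cup\psi,\chi\rangle_{\sf Muk} \;=\; \pm\langle \varphi \cap D(\psi),\, D(\chi)\rangle_{\sf Muk} \;=\; \pm\langle D(\psi),\, \varphi \cap D(\chi)\rangle_{\sf Muk} \;=\; \pm\langle D(\psi),\, D(\varphi\cup\chi)\rangle_{\sf Muk}, \]
and one more application of symmetry produces $\pm D^*\langle \varphi,\, \psi\cup\chi\rangle_{\sf Muk}$.

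The main obstacle, as always with chain-level Hochschild computations, will be sign bookkeeping in the $\Z/2\Z$-graded, weakly curved setting. One has to work consistently with the \emph{reduced} Hochschild complexes for the curvature term $\m_0 = \lambda\bone$ to behave correctly, verify that the adjointness identity actually holds on the nose modulo an explicit boundary rather than only up to higher homotopy, and finally confirm that all the signs obtained from the three-step factorization of $D$ assemble into the Koszul-correct Frobenius identity. I expect this sign analysis, rather than any structural issue, to be where the real work lies.
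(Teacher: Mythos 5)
Your proposal is correct and rests on exactly the same key lemma as the paper: the self-adjointness of the cap product with respect to the Mukai pairing (the paper's Proposition~A.1, proved by exhibiting an explicit homotopy $H=H_1+H_2+H_3$ following Sheridan's Lemma~5.39), and your part~(b) is the same chain of equalities the paper writes out. The one organizational difference is in part~(a): the paper does not factor $D$ and argue equivariance piece by piece; it verifies $\langle D(\varphi\cup\psi),\alpha\rangle_{\sf Muk}=(-1)^{|\varphi|d}\langle\varphi\cap D(\psi),\alpha\rangle_{\sf Muk}$ directly for every chain $\alpha$, using the formula $\langle D(\varphi),\alpha\rangle_{\sf Muk}=\pm\langle\varphi(\alpha_1,\dots,\alpha_n),\alpha_0\rangle$, the adjointness lemma, and the cyclic symmetry of $\langle-,-\rangle$. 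Be aware that the step you call manifest --- that under the identification $HH^{\bullet+d}(A^\vee)\cong HH_{\bullet-d}(A)^\vee$ the cup-product action transposes to the dual of the cap-product action --- is precisely that cyclic-symmetry computation; in the $A_\infty$ setting it is not purely formal, and it is where the nontrivial sign bookkeeping of the paper's Proposition~A.2 actually lives. So your factorization is a legitimate repackaging, but it relocates rather than eliminates the chain-level work, and you should not treat the equivariance of $D_2$ as free.
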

We will provide a proof of this theorem in Appendix A.

\begin{cor}\label{cor:hh_even}
	If $HH^{\bullet}(A)$ is a semi-simple ring then $HH^{{\sf odd}}(A)=0$. 
\end{cor}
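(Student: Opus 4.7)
My plan is to derive the vanishing of $HH^{{\sf odd}}(A)$ from the graded commutativity of cup product together with the elementary fact that a semi-simple ring has no nonzero nilpotent two-sided ideals; interestingly, the Frobenius structure of Theorem \ref{thm:Dfolk}(b) plays no role here.

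Write $R = HH^\bullet(A)$ and fix $\theta \in R^{{\sf odd}}$. First I would record the standard fact that $\theta \cup \theta = -\theta \cup \theta$, which gives $\theta^2 = 0$ since $\mathrm{char}\,\mathbb{K}\neq 2$. The crucial strengthening, which is really the whole content of the argument, is that for any homogeneous $r \in R$ graded commutativity yields $\theta r = (-1)^{|r|} r\theta$, and hence
\[
\theta\, r\, \theta \;=\; (-1)^{|r|}\, r\, \theta^{2} \;=\; 0.
\]
Extending linearly, $\theta R\theta = 0$.

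From this the conclusion is automatic. Let $I = R\theta R$ be the two-sided ideal generated by $\theta$; then
\[
I^{2} \;=\; R\theta R\cdot R\theta R \;\subseteq\; R(\theta R\theta) R \;=\; 0,
\]
so $I$ is a square-zero two-sided ideal of $R$. Since $R$ is semi-simple its Jacobson radical vanishes, and every nilpotent two-sided ideal of a ring lies inside its Jacobson radical; therefore $I = 0$. But $\theta = \bone\cdot\theta\cdot\bone \in I$, which forces $\theta = 0$. Hence $HH^{{\sf odd}}(A) = 0$. The only nontrivial observation is the passage from $\theta^{2}=0$ to $\theta R\theta = 0$; everything else is formal, and the finite-dimensionality coming from smoothness and compactness of $\CC$ is only needed implicitly so that ``semi-simple'' has its usual Artin--Wedderburn meaning.
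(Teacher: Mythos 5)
Your proof is correct, but it takes a genuinely different route from the paper's. The paper first invokes Theorem \ref{thm:Dfolk}(b) to get the Frobenius structure, then uses non-degeneracy of $D^*\langle-,-\rangle_{{\sf Muk}}$ to pick $\psi$ with $D^*\langle\varphi,\psi\rangle_{{\sf Muk}}=1$, concludes $\varphi\cup\psi\neq 0$ while $(\varphi\cup\psi)^2=0$, and then appeals to the absence of nilpotent \emph{elements} in a semi-simple ring. You bypass the pairing entirely: from graded commutativity alone you upgrade $\theta^2=0$ to $\theta R\theta=0$, so the two-sided ideal $R\theta R$ is square-zero, hence contained in the (vanishing) Jacobson radical. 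What your approach buys is independence from Theorem \ref{thm:Dfolk} and, more importantly, logical robustness: ``a semi-simple ring has no nilpotents'' is false for noncommutative semisimple rings (e.g.\ matrix algebras), so the paper's final step tacitly relies on the relevant element lying in a commutative piece (or on ``semi-simple'' meaning a product of fields, as the idempotent bases used later suggest); your ideal-theoretic version is the statement that holds for semisimple rings in full Artin--Wedderburn generality. What the paper's route buys is alignment with the cited source \cite{HMT} and the fact that the Frobenius structure is needed anyway for the surrounding results. One small quibble: your aside that finite-dimensionality is needed for semi-simplicity to have its usual meaning is unnecessary --- the facts you use (nil ideals lie in the Jacobson radical; the radical of a semisimple ring vanishes) hold for arbitrary unital rings.
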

\begin{proof}
By the above proposition, $HH^{\bullet}(A)$ is a Frobenius algebra. This now follows from \cite{HMT}, we reproduce the argument here for the reader's convenience. Suppose there is a class of odd degree $\varphi\neq 0$. Then $\varphi \cup \varphi=0$, by graded commutativity, and there is $\psi$ such that $D^*\langle \varphi, \psi \rangle_{{\sf Muk}}=1$ by non-degeneracy of the pairing. This implies $D^*\langle \varphi\cup \psi, \bone \rangle_{{\sf Muk}}=1$ and therefore $\varphi\cup \psi\neq 0$. Since $(\varphi\cup \psi)^2=0$ we conclude that $\varphi\cup \psi$ is nilpotent. But this is a contradiction since there are no nilpotents in a semi-simple ring.
\end{proof}

\subsection{Pairing and $u$-connection}
On $HC^-_\bullet(A)$ we can define a pairing and a connection. 
\begin{itemize}
\item The pairing, known as the higher residue pairing~\cite{Shk3}:
\begin{equation}\label{eq:hres}
\langle -, - \rangle_{{\sf hres}}: HC^-_\bullet(A)\otimes HC^-_\bullet(A) \ra \mathbb{K}[[u]],
\end{equation}
is obtained by extending the map defined by (\ref{eq:mukai}) sesquilinearly, that is $\langle u \alpha, \beta \rangle_{{\sf hres}}=- \langle  \alpha, u\beta \rangle_{{\sf hres}}=u\langle \alpha, \beta \rangle_{{\sf Muk}}$, for Hochschild chains $\alpha, \beta$.

\item The meromorphic connection $\displaystyle \nabla_{\frac{d}{du}}:HC^{-}_\bullet(A) \to u^{-2}HC^-_\bullet(A)$ is defined by the formula (see~\cite{CLT}\cite{KS}\cite{KKP}\cite{Shk}):
\begin{equation}\label{eq:u_conn}
\nabla_{\frac{d}{du}}:= \frac{d}{du} + \frac{\Gamma}{2u} + \frac{\iota\{\m'\}}{2u^2}.
\end{equation}
Where $\m'$ is the  cocycle in $CC^\bullet(A)$ defined as $\m':=\prod_{k\geq 0} (2-k)\m_k$ and $\Gamma$ is the length operator $\Gamma(a_0|\ldots |a_n)=-n\cdot a_0|\ldots | a_n$.
\end{itemize}

\begin{prop}\label{prop:parallel}
The higher residue pairing is parallel with respect to the $u$-connection, that is
\begin{equation}\label{eq:parallel}
\frac{d}{du}\langle \alpha , \beta \rangle_{{\sf hres}}= \langle \nabla_{\frac{d}{du}} \alpha , \beta \rangle_{{\sf hres}}- \langle \alpha , \nabla_{\frac{d}{du}} \beta \rangle_{{\sf hres}}
\end{equation}
\end{prop}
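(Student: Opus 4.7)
The plan is to reduce the claim to a finite collection of chain-level identities and verify each by combining cyclic symmetry of the Mukai pairing formula (\ref{eq:mukai}) with the Cartan homotopy formula of Proposition \ref{prop:HHformulas}(1). First, by the $\mathbb{K}[[u]]$-sesquilinearity of $\langle -, -\rangle_{\sf hres}$ it suffices to take representatives $\alpha, \beta \in CC_\bullet(A)[[u]]$ that are $(b+uB)$-cycles, and verify the identity modulo $(b+uB)$-exact terms. Substituting the definition (\ref{eq:u_conn}) of $\nabla_{\frac{d}{du}}$ and applying Leibniz to the $\tfrac{d}{du}$-piece, a direct sesquilinearity check (verifiable on monomials $\alpha = u^i \alpha_0$, $\beta = u^j \beta_0$) gives
\[
\langle \tfrac{d\alpha}{du}, \beta\rangle_{\sf hres} - \langle \alpha, \tfrac{d\beta}{du}\rangle_{\sf hres} = \tfrac{d}{du}\langle \alpha, \beta\rangle_{\sf hres},
\]
so this part of (\ref{eq:parallel}) matches automatically.

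What remains, after clearing $u$'s using sesquilinearity (with $u^{-1}$ odd and $u^{-2}$ even under the pairing), is the identity
\[
u\bigl(\langle \Gamma\alpha, \beta\rangle_{\sf hres} + \langle \alpha, \Gamma\beta\rangle_{\sf hres}\bigr) + \langle \iota\{\m'\}\alpha, \beta\rangle_{\sf hres} - \langle \alpha, \iota\{\m'\}\beta\rangle_{\sf hres} \equiv 0
\]
modulo $(b+uB)$-exact. Splitting $\iota\{\m'\} = b\{\m'\} + uB\{\m'\}$ and collecting coefficients of $u^0$ and $u^1$ yields two chain-level statements: the $u^0$-identity $\langle b\{\m'\}\alpha, \beta\rangle_{\sf Muk} - \langle \alpha, b\{\m'\}\beta\rangle_{\sf Muk} \equiv 0$ modulo $b$-exact, and the $u^1$-identity $\langle \Gamma\alpha, \beta\rangle_{\sf Muk} + \langle \alpha, \Gamma\beta\rangle_{\sf Muk} + \langle B\{\m'\}\alpha, \beta\rangle_{\sf Muk} + \langle \alpha, B\{\m'\}\beta\rangle_{\sf Muk} \equiv 0$ modulo $(b+uB)$-exact.

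For the first of these, the Mukai formula (\ref{eq:mukai}) places $\alpha$ and $\beta$ on a single cyclic strand interrupted by two $\m_*$-contractions. Inserting $\m'$ on the $\alpha$-side and using the Maurer--Cartan condition $[\m, \m'] = 0$ together with the cyclic symmetry of the trace converts the $\alpha$-side insertion into a $\beta$-side insertion modulo a $b$-coboundary; a clean way to package this is to apply the Daletskii--Gelfand--Tsygan formula (Proposition \ref{prop:HHformulas}(2)) to $\varphi = \m$, $\psi = \m'$. For the second statement, the length operator $\Gamma$ contributes $-(r+s)\langle \alpha, \beta\rangle_{\sf Muk}$ on chains of lengths $r+1$ and $s+1$, while each insertion of $\m_k$ via $B\{\m'\}$ changes the tensor length by $k-1$. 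The weights $(2-k)$ appearing in $\m' = \sum_{k\geq 0}(2-k)\m_k$ are chosen precisely so that the total weighted length change exactly compensates $-(r+s)$, modulo a $(b+uB)$-coboundary supplied by the Cartan formula (Proposition \ref{prop:HHformulas}(1)) applied to $\varphi = \m'$: since $[\m,\m']=0$, we get $[b+uB, \iota\{\m'\}] = -u\mL_{\m'}$, exhibiting $u\mL_{\m'}$ as a $(b+uB)$-coboundary.

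The main obstacle is the Koszul sign bookkeeping in (\ref{eq:mukai}), specifically when rotating inputs across the two $\m_*$-contractions in interaction with the sesquilinear sign convention for $u$ (even powers of $u$ pass through the pairing unchanged, while odd powers pick up a sign). A conceptually cleaner framing is to view $u\Gamma + \iota\{\m'\}$ as implementing the $S^1$-equivariant grading on $CC_\bullet(A)((u))$, for which skew-adjointness with respect to $\langle -, -\rangle_{\sf hres}$ is built into the cyclic structure; the chain-level verification above can then be understood as unpacking this structural fact.
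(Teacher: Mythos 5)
Your overall skeleton is the same as the paper's: reduce to (i) self-adjointness of $b\{\m'\}$ with respect to the Mukai pairing and (ii) the identity $\langle(\Gamma+B\{\m'\})x,y\rangle_{{\sf Muk}}+\langle x,(\Gamma+B\{\m'\})y\rangle_{{\sf Muk}}=0$ up to homotopy, and your observation that the weights $(2-k)$ in $\m'$ are calibrated to cancel the length operator $\Gamma$ is exactly the input-counting argument the paper uses (it is the same count as in Lemma~\ref{lem:M-antisymmetric}). However, the tools you invoke to ``cleanly package'' the two identities cannot deliver them, and this is a genuine gap. The Daletskii--Gelfand--Tsygan formula applied to $\varphi=\m$, $\psi=\m'$ only yields $[b,\,b\{\m'\}+\mathcal{T}(\m,\m')]=0$, i.e.\ that $b\{\m'\}$ is a chain map up to homotopy; it is an operator identity on $CC_\bullet(A)$ and says nothing about adjointness under $\langle-,-\rangle_{{\sf Muk}}$. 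Likewise the Cartan formula exhibits $u\mathcal{L}_{\m'}$ as a commutator $[b+uB,\iota\{\m'\}]$ on the cyclic complex, but the statement you need is about the \emph{pairing}, and a coboundary of an operator does not translate into a coboundary of a bilinear functional unless you already know how the pairing interacts with $b$ and $B$ at the chain level. That interaction is precisely the missing content: since the target $\mathbb{K}[[u]]$ carries the zero differential, ``modulo exact'' must mean ``equal to $H((b+uB)\alpha,\beta)\pm H(\alpha,(b+uB)\beta)$ for some explicit bilinear homotopy $H$,'' and you never construct $H$.

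The paper supplies exactly this: the homotopy $H=H_1+H_2+H_3$ built from trace diagrams in the proof of Proposition~\ref{prop:adjoint}, which (applied to the closed degree-zero cochain $\varphi=\m'$ --- this, not DGT, is the correct reference for your first identity) gives the chain-level self-adjointness of $b\{\m'\}$, and a further direct computation (\ref{eq:Bformula}) showing that the $B\{\m'\}$-terms plus $H(B(x),y)\mp H(x,B(y))$ assemble into trace expressions with two $\m'$-insertions, to which the weight count then applies. Your deferred ``Koszul sign bookkeeping'' is not a peripheral nuisance but the place where $H$ is actually defined and verified; the closing appeal to the $S^1$-equivariant structure (``skew-adjointness is built into the cyclic structure'') asserts the proposition rather than proving it. To complete the argument you should either construct $H$ explicitly as in Appendix A, or cite Proposition~\ref{prop:adjoint} for identity (i) and carry out the two-insertion trace computation for identity (ii).
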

We will prove this proposition in Appendix B. Its dg-version was proved by Shklyarov~\cite{Shk3}.

\section{Splittings of the non-commutative Hodge filtration}~\label{sec:splitting}

Let $A$ be as in $(\dagger)$. Assume that $A$ has semi-simple Hochschild cohomology. In this section, we prove that $A$ admits a canonical splitting of the Hodge filtration in the sense of Definition~\ref{defi:splitting}. Furthermore, we exhibit a natural bijection between the set of splittings with the set of grading operators on $HH_\bullet(A)$.

\subsection{The semi-simple splitting} 
We define the following cochain in $CC^\bullet(A)$:
\[ \eta:=\prod_{k\geq 1} (2-k)\m_k.\]
One can easily verify that $[\m,\eta]=0$, hence it defines a class in the Hochschild cohomology $[\eta]\in HH^\bullet(A)$.

\begin{lem}\label{lem:eta}
Assume that the Hochschild cohomology ring $HH^\bullet(A)$ is semi-simple. Then we have $[\eta]=0$.
\end{lem}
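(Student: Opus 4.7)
The plan is to recognize $\eta$ as an Euler-type cocycle and use the Frobenius algebra structure on $HH^\bullet(A)$ to force its class to zero. Since $\m'=\eta+2\m_0=\eta+2\lambda\bone$, the claim $[\eta]=0$ is equivalent to $[\m']=2\lambda[\bone]$ in $HH^\bullet(A)$, so the problem reduces to identifying the Euler class $[\m']$ with a scalar multiple of the unit.

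The key step is to extract the required identity from the chain-level geometry of the $u$-connection~\eqref{eq:u_conn}. The Cartan homotopy formula (Proposition~\ref{prop:HHformulas}(1)) applied to the cocycle $\m'$ yields $[b+uB,\iota\{\m'\}]=-u\mathcal{L}_{\m'}$, and the fact that $\nabla_{d/du}$ defines a meromorphic connection on $HC^-_\bullet(A)$ supplies a chain-level identity relating $[b+uB,\Gamma]$ to $\mathcal{L}_{\m'}$, modulo a boundary term coming from the curvature $\m_0=\lambda\bone$. These identities should combine to show that the cap-product action $[\m']\cap-$ on $HH_\bullet(A)$ agrees with multiplication by $2\lambda$. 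By Theorem~\ref{thm:Dfolk}(a), the duality $D$ of~\eqref{def:D} is an $HH^\bullet(A)$-module map, so setting $\alpha=D([\bone])$ in this cap-product identity and applying $D^{-1}$ converts it into the cup-product equality $[\m']\cup[\bone]=2\lambda[\bone]$, which is the desired statement.

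Semi-simplicity enters via Corollary~\ref{cor:hh_even} and Theorem~\ref{thm:Dfolk}(b): $HH^\bullet(A)$ is a semi-simple commutative Frobenius algebra concentrated in even $\Z/2\Z$-degree, and in such an algebra a class is determined by its action on the unit, which is what the argument above computes. The main obstacle I anticipate is the chain-level bookkeeping — carefully extracting the coefficient $2\lambda$ from the interplay between $\Gamma$, $\mathcal{L}_{\m'}$, and $\m_0$, while tracking signs in the reduced Hochschild complex. An alternative that avoids this is to argue directly that $D^*\langle[\eta],\varphi\rangle_{{\sf Muk}}=0$ for every $\varphi\in HH^\bullet(A)$ using the explicit trace formula~\eqref{eq:mukai} and the absence of the $\m_0$-term in $\eta$; non-degeneracy of the Mukai pairing on the semi-simple Frobenius algebra then forces $[\eta]=0$.
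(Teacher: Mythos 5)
Your main route has a genuine circularity. The ``key step'' you propose --- that $[\m']\cap(-)$ acts on $HH_\bullet(A)$ as multiplication by $2\lambda$, equivalently that $[\eta]\cap(-)=0$ --- is not an intermediate fact but is \emph{equivalent} to the statement of the lemma: since $D$ is an isomorphism of $HH^\bullet(A)$-modules (Theorem~\ref{thm:Dfolk}(a)), $[\eta]\cap(-)=0$ on $HH_\bullet(A)$ iff $[\eta]\cap D(\bone)=\pm D([\eta])=0$ iff $[\eta]=0$. You offer no derivation of it beyond ``these identities should combine,'' and no such derivation can exist from the ingredients you list: the Cartan homotopy formula and the meromorphy of $\nabla_{d/du}$ in \eqref{eq:u_conn} hold for \emph{every} algebra satisfying $(\dagger)$, whereas $[\eta]=0$ genuinely requires semi-simplicity (without it $[\eta]$ can be a nonzero nilpotent). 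Note also that in the paper the simplification of the $u^{-2}$-part of the connection to $\lambda\cdot\id$ in \eqref{eq:connection} is a \emph{consequence} of this lemma (it requires a primitive $Q$ with $[\m,Q]=\eta$), not an input to it. The point where you say semi-simplicity enters --- ``a class is determined by its action on the unit'' --- is true in any unital ring and carries no force. Your fallback argument has the same defect: $D^*\langle[\eta],\varphi\rangle_{{\sf Muk}}=0$ for all $\varphi$ is equivalent to $[\eta]=0$ by non-degeneracy of the Mukai pairing alone, which holds without semi-simplicity; an unconditional ``direct computation'' of this would therefore prove a false strengthening of the lemma.

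The two ideas you are missing are: (i) a filtration argument showing $[\eta]$ is \emph{nilpotent}, and (ii) the fact that a semi-simple ring has no nonzero nilpotents, which is where the hypothesis actually bites. For (i), since $\eta=\prod_{k\geq 1}(2-k)\m_k$ has no length-zero component, capping a Hochschild chain of length $n$ with $\eta$ produces a chain of length at most $n-1$. Choosing cycles $e_1,\dots,e_k$ representing a basis of $HH_\bullet(A)$ of maximal length $N$, one gets $\eta^{\cup(N+1)}\cap e_i=0$ for all $i$, so $[\eta]^{\cup(N+1)}\cap(-)=0$ on homology; applying this to $\omega=D(\bone)$ and using that $D$ is a module isomorphism yields $[\eta]^{\cup(N+1)}=0$. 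Then (ii) forces $[\eta]=0$. Your instinct to route the argument through the Frobenius-algebra structure and the duality $D$ is in the right neighborhood, but the decisive mechanism is nilpotence, not an identification of the cap action of $[\m']$ with a scalar.
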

\begin{proof} Let $[e_1],\ldots,[e_k]$ be a basis of $HH_\bullet(A)$, and let $N$ be the maximum of the lengths of all the $e_i$. By definition of the cap product, if $\alpha$ is a chain of length $n$ then $\eta\cap \alpha$ is a chain of length less than or equal to $n-1$. Therefore $\eta^{ \cup N+1}\cap e_i=0$ for all $i$. Hence $\eta^{\cup N+1}\cap (-)$ is the zero map on homology. In particular, when applying this to the Hochschild homology class $\omega:=D(\bone)$, usually called the ``volume form", we obtain $0=\eta^{\cup N+1}\cap \omega=\eta^{\cup N+1}\cap D(\bone)= D(\eta^{\cup N+1})$, which gives $\eta^{\cup N+1}=0$, since $D$ is an isomorphism. In other words, $[\eta]$ is a nilpotent element, which must necessarily be zero in a semi-simple ring.
\end{proof}

By the previous lemma, there exists a cochain $Q\in CC^\bullet(A)$ such that $$[\m,Q]=\eta.$$ We fix such a cochain $Q$ in the following. Using Proposition~\ref{prop:HHformulas}(1) we have

\[ [b+uB, \iota\{Q\}] = -u \cdot \mathcal{L}_Q -\iota\{ \eta \}.\]

Moreover $\m'= 2\lambda \bone + \eta$ since $\m_0=\lambda\bone$. Therefore we have a simplified formula of the $u$-connection operator:
\begin{equation}\label{eq:connection}
\nabla'_{\frac{d}{du}} = \frac{d}{du} + \frac{\lambda\cdot \id}{u^2} + \frac{\Gamma-\mathcal{L}_Q}{2u},
\end{equation}
using the fact that $\iota\{\bone\}=\id$.
Note that the two connections $\nabla'_{\frac{d}{du}}$ and $\nabla_{\frac{d}{du}}$ differ by $\frac{[b+uB,\iota\{Q\}]}{2u^2}$, which implies that they induce the same map on 
homology. For this reason, we shall not distinguish them, and slightly abuse the notation, using $\nabla_{\frac{d}{du}}$  for both operators. 

Next we will study the $u^{-1}$ part of this connection. From now on we use the notation: 
\[ \Gamma':= \Gamma-\mathcal{L}_Q.\]

\medskip
\begin{lem}~\label{lem:commutator}
We have $[b,\Gamma']=-b$. Thus the operator $\Gamma'$ induces a map on Hochschild homology, which we still denote by
\[ \Gamma': HH_\bullet(A)\ra HH_\bullet(A).\]
\end{lem}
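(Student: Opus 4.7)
The plan is to split $M=\Gamma-\mathcal{L}_Q$ and handle the two commutators $[b,\mathcal{L}_Q]$ and $[b,\Gamma]$ separately. The first piece is formal: since $\varphi\mapsto\mathcal{L}_\varphi$ is a DG Lie module structure on $CC_\bullet(A)$ over $CC^\bullet(A)[1]$, and $b=\mathcal{L}_\m$, we get
\[
[b,\mathcal{L}_Q]=[\mathcal{L}_\m,\mathcal{L}_Q]=\mathcal{L}_{[\m,Q]}=\mathcal{L}_\eta,
\]
using the defining property $[\m,Q]=\eta$ of the cochain $Q$. The actual content of the lemma therefore lies in establishing the chain-level identity $[b,\Gamma]=\mathcal{L}_\eta - b$.

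I would prove this by a length count. Inspecting the two summations in the defining formula for $b$, every summand of $b(\alpha_0|\cdots|\alpha_r)$ — whether from the internal sum or from the cyclic sum — is produced by feeding $k$ consecutive entries (possibly after a cyclic rotation) into some $\m_k$, and hence is a chain of length $r-k+1$. Since $\Gamma$ multiplies a chain of length $\ell$ by $-\ell$, the commutator $[b,\Gamma]=b\Gamma-\Gamma b$ acts on that summand by the scalar $(r-k+1)-r=1-k$. Writing $b$, $\mathcal{L}_\eta$, and the length-weighted output in the same $\mathcal{L}$-type combinatorial format, one obtains
\[
b=\sum_{k\geq 1}\mathcal{L}_{\m_k},\qquad \mathcal{L}_\eta=\sum_{k\geq 1}(2-k)\mathcal{L}_{\m_k},\qquad [b,\Gamma]=\sum_{k\geq 1}(1-k)\mathcal{L}_{\m_k},
\]
and elementary arithmetic gives $[b,\Gamma]=\mathcal{L}_\eta - b$. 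Combining the two computations, $[b,M]=[b,\Gamma]-[b,\mathcal{L}_Q]=(\mathcal{L}_\eta-b)-\mathcal{L}_\eta=-b$, as desired.

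The statement about Hochschild homology is then automatic: the identity rewrites as $bM=Mb-b$, which shows $M$ preserves $b$-cycles (if $b\alpha=0$ then $bM\alpha=0$) and $b$-boundaries (if $\alpha=b\beta$ then $M\alpha=Mb\beta=b(M\beta+\beta)$), so $M$ descends to an endomorphism of $HH_\bullet(A)$.

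The only genuine obstacle is sign bookkeeping in the length-count step: one must check that the $(-1)^{\star}$ and $(-1)^{@}$ signs appearing in the two summations defining $b$ really do match the signs in the corresponding pieces of $\mathcal{L}_{\m_k}$, so that collapsing all terms into the displayed $\sum_{k}(1-k)\mathcal{L}_{\m_k}$ form is legitimate term by term. This is routine, given that the Koszul signs for both operators come from the same cyclic-rotation convention, but is the one place where care is genuinely required.
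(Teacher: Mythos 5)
Your proof is correct and follows the same route as the paper: split $M=\Gamma-\mathcal{L}_Q$, use $[\mathcal{L}_\m,\mathcal{L}_Q]=\mathcal{L}_{[\m,Q]}=\mathcal{L}_\eta$, and establish $[b,\Gamma]=\mathcal{L}_{\prod_{k\geq1}(1-k)\m_k}$ by an arity/length count (the paper states this identity with "one checks"; your count supplies it, and since $\Gamma$ is a scalar on each fixed length, the sign issue you flag is vacuous once one uses $b=\mathcal{L}_\m$ and linearity of $\varphi\mapsto\mathcal{L}_\varphi$). The descent to homology is as you say.
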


\begin{proof} Since $b=\mathcal{L}_\m$, we have $[b,\mathcal{L}_Q]=[\mathcal{L}_\m,\mathcal{L}_Q]=\mathcal{L}_{[\m,Q]}=\mathcal{L}_\eta$.  On the other hand, one checks that $[b,\Gamma]=\mathcal{L}_{ \prod_{k\geq 1} (1-k)\m_k}$. The two identities imply that
\[ [b,\Gamma']=[b, \Gamma-\mathcal{L}_Q]= -\mathcal{L}_\m=-b.\]
\end{proof}

\medskip
\begin{lem}~\label{lem:M-antisymmetric}
The operator $\Gamma'=\Gamma-\mathcal{L}_Q$ as defined above is anti-symmetric with respect to the Mukai pairing, i.e. 
\[ \langle \Gamma' x, y\rangle_{\sf Muk}+\langle x, \Gamma' y \rangle_{\sf Muk}=0, \;\; \forall x,y \in HH_\bullet(A).\]
\end{lem}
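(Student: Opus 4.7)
The plan is to deduce this directly from the compatibility of the higher residue pairing with the $u$-connection (Proposition~\ref{prop:parallel}), by applying it to $u$-independent classes and extracting the coefficient of $u^{-1}$.

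Specifically, given $x,y\in HH_\bullet(A)$, I view them as $u$-independent elements of $HC^-_\bullet(A)$. Since the higher residue pairing agrees with the Mukai pairing on Hochschild chains, $\langle x,y\rangle_{\sf hres}=\langle x,y\rangle_{\sf Muk}\in\mathbb{K}$, which is independent of $u$; hence the left-hand side of \eqref{eq:parallel} vanishes. Using the simplified connection formula \eqref{eq:connection}, I then compute
\[
\nabla_{d/du} x = \frac{\lambda}{u^2}\,x + \frac{1}{2u}\,Mx,
\]
and similarly for $y$. The plan is to plug these into the right-hand side of \eqref{eq:parallel}, keeping careful track of the sesquilinearity of the higher residue pairing: from $\langle u\alpha,\beta\rangle_{\sf hres}=-\langle \alpha,u\beta\rangle_{\sf hres}$ one gets $\langle u^k\alpha,\beta\rangle_{\sf hres}=u^k\langle \alpha,\beta\rangle_{\sf Muk}$ and $\langle \alpha,u^k\beta\rangle_{\sf hres}=(-u)^k\langle \alpha,\beta\rangle_{\sf Muk}$ for $x,y$ of pure degree in $u$.

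Substituting, the $u^{-2}$ contributions from the two $\lambda/u^2$ terms cancel because $(-1)^{-2}=1$. The $u^{-1}$ contributions, on the other hand, pick up opposite signs from the sesquilinearity:
\[
\langle \nabla_{d/du} x, y\rangle_{\sf hres} - \langle x, \nabla_{d/du} y\rangle_{\sf hres}
= \frac{1}{2u}\bigl(\langle Mx, y\rangle_{\sf Muk} + \langle x, My\rangle_{\sf Muk}\bigr)+O(u^{-2},\ldots).
\]
Setting this equal to $0$ and reading off the $u^{-1}$ coefficient yields the desired identity $\langle Mx,y\rangle_{\sf Muk}+\langle x,My\rangle_{\sf Muk}=0$.

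The only real subtlety is bookkeeping the signs in the sesquilinearity, particularly verifying that $\langle x,u^{-2}y\rangle_{\sf hres}=u^{-2}\langle x,y\rangle_{\sf Muk}$ so that the $u^{-2}$ terms really do cancel and do not introduce an extra relation. Everything else is formal once Proposition~\ref{prop:parallel} is invoked. A purely chain-level proof, unwinding $M=\Gamma-\mathcal{L}_Q$ against the explicit Mukai formula \eqref{eq:mukai}, would in principle work but would be much longer and sign-heavier; the connection-based argument is the natural one since $M$ is precisely the residue of $\nabla_{d/du}$ at $u=0$ (up to a factor of $2$) and the higher residue pairing is parallel by construction.
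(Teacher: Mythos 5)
Your route through Proposition~\ref{prop:parallel} is genuinely different from the paper's proof, which is a direct chain-level computation comparing $\langle \mathcal{L}_Q x,y\rangle_{\sf Muk}+\langle x,\mathcal{L}_Qy\rangle_{\sf Muk}$ with the corresponding $\Gamma$-terms via Sheridan's tree diagrams; and it is not circular, since Proposition~\ref{prop:parallel} is proved independently in Appendix B. However, as written there is a genuine gap at the very first step: a Hochschild class $x$ cannot in general be ``viewed as a $u$-independent element of $HC^-_\bullet(A)$''. A Hochschild cycle $x_0$ satisfies $bx_0=0$ but not $(b+uB)x_0=0$, so it is not a negative cyclic cycle. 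To apply Proposition~\ref{prop:parallel} you must first lift $x$ and $y$ to negative cyclic classes $\tilde x=x+x_1u+\cdots$ and $\tilde y=y+y_1u+\cdots$, which requires the Hodge-to-de-Rham degeneration (available in this section via semi-simplicity and Corollary~\ref{cor:hh_even}, but it must be invoked, and your argument only sees classes in the image of $HC^-_\bullet(A)\to HH_\bullet(A)$). Once lifted, neither of your two simplifications survives literally: $\langle\tilde x,\tilde y\rangle_{\sf hres}$ is not constant in $u$ (only its $u^0$-coefficient is $\langle x,y\rangle_{\sf Muk}$), and the correction terms $x_1,y_1$ of the lifts do enter the $u^{-1}$-coefficient of the right-hand side through the $\lambda/u^2$ part of the connection, contributing $\lambda\langle x_1,y\rangle_{\sf Muk}-\lambda\langle x,y_1\rangle_{\sf Muk}$ from the first slot and terms from the second slot as well.

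The argument is repairable. The left-hand side $\frac{d}{du}\langle\tilde x,\tilde y\rangle_{\sf hres}$ lies in $\mathbb{K}[[u]]$ and hence has vanishing $u^{-1}$-coefficient, which is all you actually need (it need not vanish identically); and the $\lambda$-cross-terms involving $x_1,y_1$ cancel between $\langle\nabla\tilde x,\tilde y\rangle_{\sf hres}$ and $-\langle\tilde x,\nabla\tilde y\rangle_{\sf hres}$ by the same sesquilinearity bookkeeping you performed for the $u^{-2}$ terms, leaving exactly $\tfrac{1}{2u}\bigl(\langle Mx,y\rangle_{\sf Muk}+\langle x,My\rangle_{\sf Muk}\bigr)$ at order $u^{-1}$. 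With those two checks added, your proof is correct. By comparison, the paper's chain-level argument needs no degeneration hypothesis and also yields the finer identity $\langle\mathcal{L}_Qx,y\rangle_{\sf Muk}+\langle x,\mathcal{L}_Qy\rangle_{\sf Muk}=(-k-l)\langle x,y\rangle_{\sf Muk}$ for chains of lengths $k$ and $l$, whereas yours is shorter and makes transparent that $M$ is the residue of a connection for which the pairing is flat.
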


\begin{proof} For two chains $x=a_0|a_1|\cdots|a_k$ and $y=b_0|b_1|\cdots|b_l$, it is clear we have
\[ \langle \Gamma x, y\rangle_{\sf Muk}+\langle x, \Gamma y\rangle_{\sf Muk}=(-k-l)\langle x, y\rangle_{\sf Muk}.\]
Next, we prove the following identity
\[ \langle \mathcal{L}_Q x, y\rangle_{\sf Muk}+\langle x, \mathcal{L}_Q y\rangle_{\sf Muk} = (-k-l)\langle x, y\rangle_{\sf Muk},\]
which implies the lemma. We could prove this by showing that both sides differ by an explicit homotopy like we do in the proofs of Proposition A.1 or Proposition 2.6. Instead we will give a pictorial proof using the tree diagrams and sign conventions as in Sheridan~\cite[Appendix C]{She}. Indeed, the Mukai pairing can be graphically written as
\[ \langle x, y\rangle_{\sf Muk}= {\sf str} \big(    
\begin{tikzpicture}[baseline={(current bounding box.center)},scale=.5]
\draw [thick] (0,1.5) to (0, -1.5);
\draw [thick] (-.5, 0) to (0, -.5);
\draw [thick] (0.5,1) to (0,.5);
\node at (-.7,.2) {$x$};
\node at (0.8, 1) {$y$};
\end{tikzpicture}\big)\]
where ${\sf str}$ denotes the super-trace.
With this notation and using $[\m,Q]=\eta=\prod_{k} (2-k) \m_k$, we have
\begin{align*}
&{\sf str} \big(    
\begin{tikzpicture}[baseline={(current bounding box.center)},scale=.5]
\draw [thick] (0,1.5) to (0, -1.5);
\draw [thick] (-.5, 0) to (0, -.5);
\draw [thick] (0.5,1) to (0,.5);
\node at (-1,.3) {$\mathcal{L}_Q x$};
\node at (0.8, 1) {$y$};
\end{tikzpicture}\big)
 + {\sf str} \big(    
\begin{tikzpicture}[baseline={(current bounding box.center)},scale=.5]
\draw [thick] (0,1.5) to (0, -1.5);
\draw [thick] (-.5, 0) to (0, -.5);
\draw [thick] (0.5,1) to (0,.5);
\node at (-.7,.2) {$x$};
\node at (1, 1.2) {$\mathcal{L}_Q y$};
\end{tikzpicture}\big) \\
&= {\sf str} \big(    
\begin{tikzpicture}[baseline={(current bounding box.center)},scale=.5]
\draw [thick] (0,1.5) to (0, -1.5);
\draw [thick] (-.5, 0) to (0, -.5);
\draw [thick] (0.5,1) to (0,.5);
\node at (-1,.3) {$\mathcal{L}_Q x$};
\node at (0.8, 1) {$y$};
\end{tikzpicture}\big) 
- {\sf str} \big(    
\begin{tikzpicture}[baseline={(current bounding box.center)},scale=.5]
\draw [thick] (0,1.5) to (0, -1.5);
\draw [thick] (-.5, 0) to (0, -.5);
\draw [thick] (0.5,1) to (0,.5);
\node at (-.7,.2) {$x$};
\node at (0.8, 1) {$y$};
\node at (0,-1) {$\bullet$};
\node at (-.5, -1) {$Q$};
\end{tikzpicture}\big)
+ {\sf str} \big(    
\begin{tikzpicture}[baseline={(current bounding box.center)},scale=.5]
\draw [thick] (0,1.5) to (0, -1.5);
\draw [thick] (-.5, 0) to (0, -.5);
\draw [thick] (0.5,1) to (0,.5);
\node at (-.7,.2) {$x$};
\node at (0.8, 1) {$y$};
\node at (0,1) {$\bullet$};
\node at (-.5, 1) {$Q$};
\end{tikzpicture}\big)
+ {\sf str} \big(    
\begin{tikzpicture}[baseline={(current bounding box.center)},scale=.5]
\draw [thick] (0,1.5) to (0, -1.5);
\draw [thick] (-.5, 0) to (0, -.5);
\draw [thick] (0.5,1) to (0,.5);
\node at (-.7,.2) {$x$};
\node at (1, 1.2) {$\mathcal{L}_Q y$};
\end{tikzpicture}\big) \\
&=  {\sf str} \big(    
\begin{tikzpicture}[baseline={(current bounding box.center)},scale=.5]
\draw [thick] (0,1.5) to (0, -1.5);
\draw [thick] (-.5, 0) to (0, -.5);
\draw [thick] (0.5,1) to (0,.5);
\node at (-.7,.2) {$x$};
\node at (0.8, 1) {$y$};
\node at (0,-.5) {$\bullet$};
\node at (.5,-.5) {$\eta$};
\end{tikzpicture}\big)
- {\sf str} \big(    
\begin{tikzpicture}[baseline={(current bounding box.center)},scale=.5]
\draw [thick] (0,1.5) to (0, -1.5);
\draw [thick] (-.5, 0) to (0, -.5);
\draw [thick] (0.5,1) to (0,.5);
\node at (-.7,.2) {$x$};
\node at (0.8, 1) {$y$};
\node at (0,0) {$\bullet$};
\node at (.5, 0) {$Q$};
\end{tikzpicture}\big)
 + {\sf str} \big(    
\begin{tikzpicture}[baseline={(current bounding box.center)},scale=.5]
\draw [thick] (0,1.5) to (0, -1.5);
\draw [thick] (-.5, 0) to (0, -.5);
\draw [thick] (0.5,1) to (0,.5);
\node at (-.7,.2) {$x$};
\node at (0.8, 1) {$y$};
\node at (0,0) {$\bullet$};
\node at (.5, 0) {$Q$};
\end{tikzpicture}\big)
 +{\sf str} \big(    
\begin{tikzpicture}[baseline={(current bounding box.center)},scale=.5]
\draw [thick] (0,1.5) to (0, -1.5);
\draw [thick] (-.5, 0) to (0, -.5);
\draw [thick] (0.5,1) to (0,.5);
\node at (-.7,0) {$x$};
\node at (0.8, 1) {$y$};
\node at (0,.5) {$\bullet$};
\node at (-.5,.7) {$\eta$};
\end{tikzpicture}\big) \\
&=  {\sf str} \big(    
\begin{tikzpicture}[baseline={(current bounding box.center)},scale=.5]
\draw [thick] (0,1.5) to (0, -1.5);
\draw [thick] (-.5, 0) to (0, -.5);
\draw [thick] (0.5,1) to (0,.5);
\node at (-.7,.2) {$x$};
\node at (0.8, 1) {$y$};
\node at (0,-.5) {$\bullet$};
\node at (.5,-.5) {$\eta$};
\end{tikzpicture}\big)
+{\sf str} \big(    
\begin{tikzpicture}[baseline={(current bounding box.center)},scale=.5]
\draw [thick] (0,1.5) to (0, -1.5);
\draw [thick] (-.5, 0) to (0, -.5);
\draw [thick] (0.5,1) to (0,.5);
\node at (-.7,0) {$x$};
\node at (0.8, 1) {$y$};
\node at (0,.5) {$\bullet$};
\node at (-.5,.7) {$\eta$};
\end{tikzpicture}\big)  \\
&=(-k-l)\langle x, y\rangle_{\sf Muk}
\end{align*}
The first equality uses the fact that ${\sf str}(A\circ B)=(-1)^{|A||B|}{\sf str}(B\circ A)$, for homogeneous maps $A, B$. The last equality uses the fact that if $(2-i)\m_i$ and $(2-j)\m_j$ are used at the two positions of $\eta$, then we must have that $i+j=k+l+4$, which implies that the total sum has coefficient $4-i-j=-k-l$.
\end{proof}

\medskip
Dually, on the Hochschild cochain complex, we define an operator 
\[\check{\Gamma}: CC^\bullet(A) \ra CC^\bullet(A)\]
by formula
\begin{align*}
\check{\Gamma}(\varphi)&:=[Q,\varphi]-\Gamma(\varphi), \mbox{ with the length operator defined by}\\
\Gamma(\varphi)&:= k\cdot \varphi, \;\;\; \mbox{for a cochain}\;\;\; \varphi\in {{\sf Hom}}(A^{\otimes k}, A), \;\; \forall k\geq 0.
\end{align*}
Analogously to Lemma~\ref{lem:commutator}, one verifies that $[\delta, [Q, - ]]=[\eta, - ]$ and additionally $[\delta, \Gamma]= \left[  \prod_{k\geq 1} (1-k)\m_k, - \right]$, which implies
\[ [\delta, \check{\Gamma}]= \delta.\]
Therefore $\check{\Gamma}$ induces a map on cohomology which we still denote by
\[ \check{\Gamma}: HH^\bullet(A)\ra HH^\bullet(A).\]

\medskip
\begin{lem}~\label{lem:derivation}
The operator $\check{\Gamma}$ is a derivation of the Hochschild cohomology ring, i.e. we have
\[ \check{\Gamma}(\phi\cup\psi)= \check{\Gamma}(\phi)\cup \psi+ \phi\cup \check{\Gamma}(\psi).\]
Furthermore, if the Hochschild cohomology ring $HH^\bullet(A)$ is semi-simple, then $\check{\Gamma}=0$.
\end{lem}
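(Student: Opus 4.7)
The plan is to establish the two claims separately. For the derivation property, the identity $[\delta,\check{M}]=\delta$ already proved above shows that $\check{M}$ descends to a well-defined operator on $HH^\bullet(A)$: for a cocycle $\varphi$ we get $\delta\check{M}(\varphi)=\delta\varphi=0$, and a coboundary $\delta\psi$ is sent to $\delta\check{M}(\psi)-\delta\psi$ which is again exact. To prove the Leibniz rule on cohomology, my approach is to exhibit an explicit chain-level homotopy witnessing it for cocycles $\varphi,\psi$, by combining two independent calculations and verifying that their error terms cancel. The first is the standard pre-Lie / brace identity underlying the Gerstenhaber algebra structure on $HH^\bullet(A)$: specialized to $Q$ (which is \emph{not} a cocycle, since $[\m,Q]=\eta$), it yields
\[
[Q,\varphi\cup\psi]-[Q,\varphi]\cup\psi-\varphi\cup[Q,\psi]\;=\;\delta\bigl(Q\{\varphi,\psi\}\bigr)+c_1\,\eta\{\varphi,\psi\},
\]
for cocycles $\varphi,\psi$, with an explicit sign $c_1$. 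The second is a direct input-count at the chain level, using $\m'=2\lambda\bone+\eta$ together with $\bone\{\varphi,\psi\}=0$, which gives
\[
\check{\Gamma}(\varphi\cup\psi)-\check{\Gamma}(\varphi)\cup\psi-\varphi\cup\check{\Gamma}(\psi)\;=\;-(-1)^{|\varphi|'}\m'\{\varphi,\psi\}\;=\;c_2\,\eta\{\varphi,\psi\}.
\]
Subtracting, the $\eta$-terms will cancel provided $c_1=c_2$, leaving $\check{M}(\varphi\cup\psi)-\check{M}(\varphi)\cup\psi-\varphi\cup\check{M}(\psi)=\delta\bigl(Q\{\varphi,\psi\}\bigr)$, which is exact on cohomology.

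For the vanishing claim, suppose $HH^\bullet(A)$ is semi-simple. By Corollary~\ref{cor:hh_even} the odd part vanishes, so $HH^\bullet(A)$ is an honestly commutative finite-dimensional $\mathbb{K}$-algebra. Commutative semi-simplicity gives a decomposition $HH^\bullet(A)\cong\prod_i L_i$, where each $L_i$ is a finite field extension of $\mathbb{K}$, necessarily separable since $\mathbb{K}$ has characteristic zero. A standard argument shows that any $\mathbb{K}$-linear derivation $D$ of such a ring vanishes: from $e_i^2=e_i$ for the primitive idempotents we get $D(e_i)=2e_iD(e_i)$, and multiplying by $e_i$ yields $e_iD(e_i)=2e_iD(e_i)$, hence $e_iD(e_i)=0$ and $D(e_i)=0$. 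Therefore $D$ preserves each factor $L_i$, and a $\mathbb{K}$-linear derivation of a finite separable extension is zero (equivalently $\Omega^1_{L_i/\mathbb{K}}=0$). Applied to $D=\check{M}$ this gives $\check{M}=0$ on $HH^\bullet(A)$.

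The main obstacle is the sign bookkeeping needed to verify $c_1=c_2$; the brace-operation identity underlying the Gerstenhaber algebra structure is standard, but producing it with the sign convention of the paper (matching \cite{She}) requires care, especially because $Q$ itself is not closed. A pictorial derivation in the tree-diagram style used in the proof of Lemma~\ref{lem:M-antisymmetric} seems the cleanest way to organize the calculation, and will produce the explicit homotopy $Q\{\varphi,\psi\}$ witnessing the Leibniz rule for $\check{M}$ on cohomology.
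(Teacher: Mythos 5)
Your proposal is correct. For the Leibniz rule it is essentially the paper's own argument: the paper verifies exactly your two identities, namely the chain-level brace identity $[Q,\m\{\varphi,\psi\}]-\m\{[Q,\varphi],\psi\}-\m\{\varphi,[Q,\psi]\}=-[Q,\m]\{\varphi,\psi\}+[\m,Q\{\varphi,\psi\}]$ (the failure term appearing precisely because $Q$ is not closed) and the input-count $\check{\Gamma}(\varphi\cup\psi)-\check{\Gamma}(\varphi)\cup\psi-\varphi\cup\check{\Gamma}(\psi)=\eta\{\varphi,\psi\}$; your signs do work out, $c_1=c_2=1$ for even cocycles (using $\varphi\cup\psi=-\m\{\varphi,\psi\}$ and $[Q,\m]=-\eta$), so the $\eta$-terms cancel and the witnessing homotopy is $Q\{\varphi,\psi\}$, exactly as in the paper. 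Where you genuinely diverge is the vanishing statement: the paper chooses an orthogonal idempotent basis $(e_i)$ of $HH^\bullet(A)$ (implicitly treating the semi-simple algebra as split, $e_j\cup e_i=\delta_{ij}e_i$), applies the derivation property to $e_i\cup e_i=e_i$, and reads off $\check{M}_{ji}=0$ entry by entry; you instead invoke the structure theorem $HH^\bullet(A)\cong\prod_i L_i$ with $L_i/\mathbb{K}$ finite separable, kill the primitive idempotents by the standard $D(e)=2eD(e)$ computation so that $\check{M}$ preserves each factor, and conclude from $\Omega^1_{L_i/\mathbb{K}}=0$. Your version is slightly more general (it does not require the idempotents to span, i.e.\ it covers non-split semi-simple $HH^\bullet(A)$, which is relevant since $\mathbb{K}$ is only assumed to have characteristic zero), at the cost of quoting a field-theoretic fact; the paper's version is a self-contained two-line matrix computation under the split hypothesis that is standard in the Frobenius-manifold context. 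Either way the lemma is established.
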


\begin{proof} To prove that $\check{\Gamma}$ is a derivation, one first checks, by direct computation, the following formula. Let $\varphi$ and $\psi$ be closed Hochschild cochains of even degree and let $Q$ be an odd cochain, we have the following:
\[	[Q, \m\{\varphi, \psi\} ]- \m\{ [Q, \varphi], \psi\} - \m\{\varphi, [Q,\psi]\}= -[Q,\m]\{\varphi, \psi\} + [\m, Q\{\varphi, \psi\}].\]
Using the fact that $\varphi \cup \psi=-\m\{\varphi, \psi\}$ and $[Q,\m]=-\eta$, this formula gives
\[	[Q, \varphi\cup \psi ]-  [Q, \varphi]\cup \psi - \varphi\cup [Q,\psi]= \eta\{\varphi, \psi\} + [\m, Q\{\varphi, \psi\}].\]
An easier computation, using the definition of ${\Gamma}$, gives
\[ {\Gamma}(\varphi \cup \psi) - {\Gamma}(\varphi)\cup \psi-\varphi\cup {\Gamma}(\psi)=\eta\{\varphi,\psi\}.\]
Subtracting the above two equations yields (on cohomology level) the intended identity
\begin{equation}\label{Mderivation}
\check{\Gamma}(\phi\cup\psi)= \check{\Gamma}(\phi)\cup \psi+ \phi\cup \check{\Gamma}(\psi).
\end{equation}

To prove the second part, let $(e_i)_{i=1,..,n}$ be an idempotent basis of $HH^{\bullet}\left(A\right)$. Applying $\check{\Gamma}$ to the equality $e_i\cup e_i=e_i$ and using (\ref{Mderivation}) we obtain 
$$2 \check{\Gamma}(e_i)\cup e_i=\check{\Gamma}(e_i).$$
Here we used the fact that $\cup$ is graded commutative and the $e_i$'s have even degree. 
Now, using the notation  $\check{\Gamma}(e_i)=\sum_j \check{\Gamma}_{j i}e_j$, the above equation gives
$$\sum_j \check{\Gamma}_{j i}e_j= \sum_j 2 \check{\Gamma}_{j i }e_j \cup e_i= \sum_j 2\delta_{i j} \check{\Gamma}_{j i } e_i.$$
Hence $\check{\Gamma}_{j i }=0$, if $i\neq j$ and $\check{\Gamma}_{i i}=2 \check{\Gamma}_{i i}$, which proves that $\check{\Gamma}=0$.
\end{proof}

\medskip
\begin{lem}~\label{lem:derivation2}
As operators on $HH_\bullet(A)$, the following identity holds:
\[ [b\{\alpha \},\Gamma']=b\{\check{\Gamma}(\alpha)\}.\]
\end{lem}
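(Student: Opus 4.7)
The plan is to rearrange the claimed identity and show that both sides equal the chain $\mathcal{T}(\eta, \alpha)$. Expanding $M = \Gamma - \mathcal{L}_Q$ and $\check{M}(\alpha) = [Q, \alpha] - \check{\Gamma}(\alpha)$, and using linearity of $b\{-\}$ in its argument, the identity $[b\{\alpha\}, M] = b\{\check{M}(\alpha)\}$ is equivalent to
\[
[b\{\alpha\}, \Gamma] + b\{\check{\Gamma}(\alpha)\} \;=\; [b\{\alpha\}, \mathcal{L}_Q] + b\{[Q, \alpha]\}.
\]
I will prove the right-hand side equals $\mathcal{T}(\eta, \alpha)$ on $HH_\bullet(A)$ using the Daletskii--Gelfand--Tsygan homotopy formula, and the left-hand side equals $\mathcal{T}(\eta, \alpha)$ already at the chain level by a direct coefficient-matching argument.

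For the right-hand side, apply Proposition~\ref{prop:HHformulas}(2) with $\varphi = Q$ and $\psi = \alpha$. Using $[\m, Q] = \eta$, $[\m, \alpha] = 0$ (since $\alpha$ is a cocycle), and $|Q|' = 0$ (so $(-1)^{|Q|'} = 1$; this parity is forced by $\check{M}$ being an even derivation as in Lemma~\ref{lem:derivation}), the formula specializes to
\[
b\{[Q, \alpha]\} \;=\; [\mathcal{L}_Q, b\{\alpha\}] - [b, \mathcal{T}(Q, \alpha)] + \mathcal{T}(\eta, \alpha).
\]
The commutator $[b, \mathcal{T}(Q, \alpha)]$ is $b$-exact, hence vanishes on homology; and because $|Q|' = 0$, one has $[\mathcal{L}_Q, b\{\alpha\}] = -[b\{\alpha\}, \mathcal{L}_Q]$. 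Rearranging yields $[b\{\alpha\}, \mathcal{L}_Q] + b\{[Q, \alpha]\} = \mathcal{T}(\eta, \alpha)$ on $HH_\bullet(A)$.

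For the left-hand side, I verify $[b\{\alpha\}, \Gamma] + b\{\check{\Gamma}(\alpha)\} = \mathcal{T}(\eta, \alpha)$ at the chain level by matching coefficients summand-by-summand. Fix $x = a_0 | a_1 | \cdots | a_n$. Each summand of $b\{\alpha\}(x) = \mathcal{T}(\m, \alpha)(x)$ is obtained by inserting some $\alpha_j$ (of arity $j$) into an outer operation $\m_p$ (of arity $p \geq 1$, since $\m$ must absorb the output of $\alpha$), producing a chain of length $n - j - p + 3$. Because $\Gamma$ multiplies a chain of length $\ell + 1$ by $-\ell$, the commutator $[b\{\alpha\}, \Gamma] = b\{\alpha\}\Gamma - \Gamma b\{\alpha\}$ rescales this summand by $-n - (-(n - j - p + 2)) = 2 - j - p$; adding $b\{\check{\Gamma}(\alpha)\}$, which rescales the corresponding summand by $j$ (since $\check{\Gamma}(\alpha_j) = j\alpha_j$), gives combined coefficient $2 - p$. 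On the other side, $\mathcal{T}(\eta, \alpha)$ differs from $\mathcal{T}(\m, \alpha) = b\{\alpha\}$ only by replacing each outer $\m_p$ with $(2-p)\m_p$, contributing the same coefficient $2 - p$ on the same summand with identical signs and combinatorial structure. Summing over all summands yields the chain-level identity. The main technical difficulty is sign bookkeeping through the DGT formula, which is manageable precisely because $|Q|' = 0$ eliminates sign anomalies in every $\mathcal{L}_Q$-commutator.
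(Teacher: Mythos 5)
Your proof is correct and follows essentially the same route as the paper's: the right-hand half is exactly the paper's application of the Daletskii--Gelfand--Tsygan formula with $\varphi=Q$, $\psi=\alpha$ (using $[\m,Q]=\eta$, $[\m,\alpha]=0$, $|Q|'=0$, and that $[b,\mathcal{T}(Q,\alpha)]$ induces zero on homology), while your chain-level coefficient count for the $\Gamma$-part is just a spelled-out version of the paper's asserted identity $b\{\check{\Gamma}\alpha\}=[\Gamma,b\{\alpha\}]+\mathcal{T}(\eta,\alpha)$. The only difference is organizational — you equate both halves to $\mathcal{T}(\eta,\alpha)$ rather than subtracting the two identities — and your sign bookkeeping (in particular the coefficient $2-p-j$ from the commutator with $\Gamma$ plus $j$ from $\check{\Gamma}$) checks out.
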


\begin{proof} Applying Proposition \ref{prop:HHformulas}(2) to $\varphi=Q, \psi=\alpha$ we get
\[  b\{Q, \alpha\}=[\mathcal{L}_Q, b\{\alpha\}]-[b,\mathcal{T}(Q,\alpha)]+\mathcal{T}(\eta, \alpha),\]
since $[\m, Q]=\eta$ and $\alpha$ is closed and even.
Then using the definition of $\mathcal{T}$ it is easy to verify that
\[ b\{{\Gamma}(\alpha)\}= [\Gamma, b\{\alpha\}] +\mathcal{T}(\eta,\alpha).\]
Subtracting the above two equations yields
\[ b\{\check{\Gamma}(\alpha)\}=[-\Gamma', b\{\alpha\}]+[b,\mathcal{T}(Q,\alpha)],\]
which after passing to homology gives the desired identity in the lemma.\end{proof}

\medskip
\begin{thm}~\label{thm:vanishing}
Assume that Hochschild cohomology ring $HH^\bullet(A)$ is semi-simple.  Then the operator $\Gamma':HH_\bullet(A) \ra HH_\bullet(A)$ is the zero operator. 
\end{thm}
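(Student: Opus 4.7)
The plan is to combine the vanishing of $\check M$ from Lemma \ref{lem:derivation} with the anti-symmetry of $M$ from Lemma \ref{lem:M-antisymmetric}, by exploiting the $HH^\bullet(A)$-module structure on $HH_\bullet(A)$ provided by Theorem \ref{thm:Dfolk}.

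First I would invoke Lemma \ref{lem:derivation} to conclude that $\check M = 0$ on $HH^\bullet(A)$ under the semi-simplicity hypothesis. Substituting this into Lemma \ref{lem:derivation2} shows that, as operators on $HH_\bullet(A)$, the identity $[b\{\alpha\}, M] = 0$ holds for every Hochschild cocycle $\alpha$. Equivalently, $M$ commutes with the cap product action, i.e.\ $M$ is a map of $HH^\bullet(A)$-modules.

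Next I would use Theorem \ref{thm:Dfolk}(a) to identify $HH_\bullet(A)$, via $D$, with $HH^\bullet(A)$ as an $HH^\bullet(A)$-module. Because $HH^\bullet(A)$ is semi-simple (hence commutative), this presents $HH_\bullet(A)$ as a free rank-one module with generator $\omega := D(\bone)$. Consequently, the $HH^\bullet(A)$-linear map $M$ is cap product with a unique class $\mu \in HH^\bullet(A)$, and it suffices to prove $\mu = 0$.

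To finish, I would transport the Frobenius algebra structure on $HH^\bullet(A)$ given by Theorem \ref{thm:Dfolk}(b) to the Mukai pairing on $HH_\bullet(A)$. Using Corollary \ref{cor:hh_even} (so that every class in sight is of even parity and no awkward signs intervene), the Frobenius identity $D^*\langle \nu \cup \varphi, \psi\rangle_{{\sf Muk}} = D^*\langle \varphi, \nu \cup \psi\rangle_{{\sf Muk}}$ translates to the module-pairing identity
\[ \langle \nu \cap x, y\rangle_{{\sf Muk}} = \langle x, \nu \cap y\rangle_{{\sf Muk}}, \qquad \forall\, \nu \in HH^\bullet(A),\ x,y \in HH_\bullet(A). \]
Applied to $\nu = \mu$, this reads $\langle Mx, y\rangle_{{\sf Muk}} = \langle x, My\rangle_{{\sf Muk}}$, whereas Lemma \ref{lem:M-antisymmetric} gives $\langle Mx, y\rangle_{{\sf Muk}} = -\langle x, My\rangle_{{\sf Muk}}$. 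Adding, $2\langle x, My\rangle_{{\sf Muk}} = 0$ for all $x,y$, and non-degeneracy of the Mukai pairing together with $\text{char}(\mathbb{K}) = 0$ force $M = 0$.

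The only genuinely delicate step is verifying the module-pairing identity cleanly; the main obstacle there would be sign bookkeeping in the definition $D^*\langle \varphi,\psi\rangle_{{\sf Muk}} = (-1)^{|\varphi|d}\langle D\varphi, D\psi\rangle_{{\sf Muk}}$ and in graded commutativity of $\cup$, but Corollary \ref{cor:hh_even} collapses all relevant degrees to even, so these signs are trivial and the argument reduces to a direct application of the Frobenius condition.
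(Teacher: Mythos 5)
Your proposal is correct, and it reaches the conclusion by a slightly different route than the paper in its final step. Both arguments rest on the same two inputs: Lemmas \ref{lem:derivation} and \ref{lem:derivation2} give $[b\{\alpha\},M]=0$, i.e.\ $M$ is $HH^\bullet(A)$-linear for the cap-product module structure, and Lemma \ref{lem:M-antisymmetric} gives anti-symmetry of $M$ for the Mukai pairing. The paper then works concretely in the idempotent basis: it writes $M$ as a matrix in the orthogonal basis $D(e_1),\dots,D(e_n)$, uses commutation with $b\{e_i\}$ together with $e_j\cap D(e_i)=\delta_{ij}D(e_i)$ to kill the off-diagonal entries, and uses anti-symmetry plus orthogonality of the basis to kill the diagonal ones. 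You instead observe that, via $D$ and the commutativity of $HH^\bullet(A)$ (graded commutativity plus Corollary \ref{cor:hh_even}), $HH_\bullet(A)$ is free of rank one over $HH^\bullet(A)$, so $M$ must be cap product with a single class $\mu$; since cap product with a fixed even class is self-adjoint for the Mukai pairing (this is Proposition \ref{prop:adjoint} in Appendix A, or equivalently the Frobenius identity of Theorem \ref{thm:Dfolk}(b) transported through $D$ as you do), $M$ is simultaneously symmetric and anti-symmetric, hence zero in characteristic zero. Your version is basis-free and arguably cleaner; it also isolates exactly where semi-simplicity is used (only in $\check M=0$ and in the evenness of $HH^\bullet(A)$), at the cost of invoking the self-adjointness of the cap product, which the paper's main-text argument replaces by the orthogonality of the idempotent basis. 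Both are complete proofs.
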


\begin{proof} By Lemma~\ref{lem:derivation} and Lemma~\ref{lem:derivation2}, we have 
\[ [b\{\alpha\}, \Gamma']=0, \;\; \forall \alpha\in HH^\bullet(A),\]
that is, the operators $b\{\alpha\}$ and $\Gamma'$ commute.
As in the proof of Lemma~\ref{lem:derivation}, we let $e_1,\cdots,e_n$ be an idempotent basis of $HH^\bullet(A)$. Since the duality map $D$ in Equation (\ref{def:D}) is an isomorphism, $D(e_1),\cdots,D(e_n)$ form a basis of $HH_\bullet(A)$. Moreover, by Theorem \ref{thm:Dfolk}, this is an orthogonal basis with respect to the Mukai pairing. 

First note that 
\begin{equation}\label{eqn:ss}
b\{e_j\}(D(e_i))= e_j \cap D(e_i)= D(e_j\cup e_i)= \delta_{i j}D(e_i).
\end{equation}

Let us write
\[ \Gamma'(D(e_i))= \sum_j \Gamma'_{j i} \cdot D(e_j).\]
Applying $b\{e_i\}$ to both sides of this equality we obtain
\[\Gamma'(b\{e_i\}(D(e_i)))= \sum_j \Gamma'_{j i} b\{e_i\}(D(e_j)), \]
since $b\{\alpha\}$ and $\Gamma'$ commute. Using (\ref{eqn:ss}) this now gives
\[ \sum_j \Gamma'_{ji} \cdot D(e_j) = \sum_j \Gamma'_{ji} \delta_{ij} D(e_j)=\Gamma'_{ii} D(e_i),\]
which implies that $\Gamma'_{ij}=0$ if $i\neq j$. 

It remains to prove that $\Gamma'_{ii}$ is also zero. Lemma~\ref{lem:M-antisymmetric} together with symmetry of the Mukai pairing give the identity $2\Gamma'_{ii}\langle D(e_i),D(e_i)\rangle_{{\sf Muk}}=0$, since the $D(e_j)$ are an orthogonal basis. Then non-degeneracy of the pairing gives $\Gamma'_{ii}=0$.\end{proof}


We will use the previous results to obtain a canonical splitting of the Hodge filtration in the semi-simple case. We start with the following definition, formulated in terms of the so called noncommutative Hodge structure (see~\cite{She}) of $A$ given by the triple $\big(HC^-_\bullet(A), \nabla_{\frac{d}{du}}, \langle -,- \rangle_{\sf hres}\big)$. 

\medskip
\begin{defi}~\label{defi:splitting}
	Let $A$ be as in $(\dagger)$. Let $\omega=D(\bone)\in HH_\bullet(A)$. A $\mathbb{K}$-linear map  $s: HH_\bullet(A) \ra HC^-_\bullet(A)$ is called a splitting of the Hodge filtration of $A$ if it satisfies
	\begin{itemize}
		\item[S1.] {{\sf (Splitting condition)}} $s$ splits the canonical map $\pi: HC^-_\bullet(A) \ra HH_\bullet(A)$, defined as $\pi(\sum_{n\geq0} \alpha_n u^n)=\alpha_0$.
		\item[S2.] {{\sf (Lagrangian condition)}} $ \langle s(\alpha), s(\beta)\rangle_{{\sf hres}} = \langle \alpha,\beta\rangle_{{\sf Muk}}, \;\; \forall \alpha, \beta \in HH_\bullet(A)$. 
	\end{itemize}
	A splitting $s$ is called a good splitting if it satisfies
	\begin{itemize}
		\item[S3.] {{\sf (Homogeneity)}} $L^s:=\bigoplus_{l\in \mathbb{N}} u^{-l}\cdot {{\sf Im}} (s)$  is stable under the $u$-connection $\nabla_{u\frac{d}{du}}$. This is equivalent to requiring $ \nabla_{u\frac{d}{du}} s(\alpha) \in u^{-1} {{\sf Im}} (s) + {{\sf Im}}(s), \; \forall \alpha$.
	\end{itemize}
	A splitting $s$ is called $\omega$-compatible if
	\begin{itemize}
		\item[S4.] {{\sf ($\omega$-Compatibility)}} $\nabla_{u\frac{d}{du}} s(\omega) \in r\cdot s(\omega) +u^{-1}\cdot {{\sf Im}} (s) \;\; \mbox{ for some\;\;}  r\in \mathbb{K}$.
	\end{itemize}
\end{defi}

\medskip
\begin{cor}~\label{cor:semi}
Let $A$ be as in $(\dagger)$. Assume that Hochschild cohomology ring $HH^\bullet(A)$ is semi-simple.  Then there exists a good, $\omega$-compatible splitting of the Hodge filtration $s: HH_\bullet(A) \ra HC^-_\bullet(A)$ uniquely characterized by the equation 
\begin{equation}\label{eq:splitting}
\nabla_{u\frac{d}{du}} s(\alpha)=u^{-1} \lambda \cdot s(\alpha), \;\forall \alpha\in HH_\bullet(A).
\end{equation}
\end{cor}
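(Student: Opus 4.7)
The plan is to exploit Theorem~\ref{thm:vanishing}, which asserts that $M = \Gamma - \mathcal{L}_Q$ induces the zero operator on $HH_\bullet(A)$, in order to reduce equation~(\ref{eq:splitting}) to a formal power-series ODE with vanishing residue, and solve it recursively in~$u$.

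First, using the simplified connection (\ref{eq:connection}), equation~(\ref{eq:splitting}) is equivalent to $\widetilde{\nabla}_{u\frac{d}{du}}\, s(\alpha) = 0$, where $\widetilde{\nabla}_{u\frac{d}{du}} := \nabla_{u\frac{d}{du}} - \tfrac{\lambda}{u}\cdot\id$ has the chain-level representative $u\frac{d}{du} + M/2$ and preserves $HC^-_\bullet(A)$. I will choose any auxiliary identification $\phi:HC^-_\bullet(A)\cong HH_\bullet(A)[[u]]$, available by degeneration (Theorem~\ref{thm:main2}). Under $\phi$, the operator $\widetilde{\nabla}_{u\frac{d}{du}}$ takes the form $u\frac{d}{du} + A(u)$ with $A(u)=\sum_{k\geq 0} A_k u^k$ and $A_k \in \mathrm{End}(HH_\bullet(A))$. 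The crucial point is that $A_0$ coincides with the operator induced by $M/2$ on $HH_\bullet(A)$, which vanishes by Theorem~\ref{thm:vanishing}; hence $A(u)$ is divisible by~$u$.

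I will then construct $s$ by a formal recursion. Writing $\sigma(\alpha) = \alpha + \sum_{k\geq 1}\sigma_k(\alpha) u^k$ and imposing $(u\tfrac{d}{du} + A(u))\sigma(\alpha) = 0$ yields
\[
k\cdot \sigma_k(\alpha) = -\sum_{j=0}^{k-1} A_{k-j}\sigma_j(\alpha),
\]
which uniquely determines every $\sigma_k(\alpha)$ since $\mathbb{K}$ has characteristic zero. Setting $s := \phi^{-1}\circ\sigma$ produces the desired splitting; uniqueness is immediate from applying the same recursion to $s-s'$ with vanishing initial data.

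Finally I will verify properties (S1)--(S4) directly from~(\ref{eq:splitting}). (S1) holds by construction. For (S3), the Leibniz rule gives $\nabla_{u\frac{d}{du}}\bigl(u^{-l}s(\alpha)\bigr) = -l\cdot u^{-l}s(\alpha) + \lambda\cdot u^{-l-1}s(\alpha) \in L^s$, and (S4) is the special case $l=0$, $\alpha=\omega$, with $r = 0$. For (S2), multiplying Proposition~\ref{prop:parallel} by $u$ and translating via sesquilinearity of $\langle-,-\rangle_{\sf hres}$ yields
\[
u\tfrac{d}{du}\langle s(\alpha), s(\beta)\rangle_{\sf hres} = \langle \nabla_{u\frac{d}{du}}s(\alpha), s(\beta)\rangle_{\sf hres} + \langle s(\alpha), \nabla_{u\frac{d}{du}}s(\beta)\rangle_{\sf hres};
\]
substituting~(\ref{eq:splitting}) and using that $\lambda/u$ contributes with opposite signs in the two slots (again by sesquilinearity) makes the right-hand side vanish, so $\langle s(\alpha), s(\beta)\rangle_{\sf hres}$ is constant in $u$ and equals $\langle\alpha,\beta\rangle_{\sf Muk}$ at $u=0$. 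The only genuinely non-formal ingredient is the vanishing of $A_0$ provided by Theorem~\ref{thm:vanishing}; once it is in hand, the rest of the proof is a formal power series recursion together with some delicate, but routine, sesquilinearity bookkeeping in~(S2).
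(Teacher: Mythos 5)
Your proposal is correct and follows essentially the same route as the paper: reduce (\ref{eq:splitting}) to flatness for the regular connection $\tilde{\nabla}=\nabla-\lambda u^{-2}\id$ (regularity being exactly the vanishing of $M$ from Theorem~\ref{thm:vanishing}), solve by the unique flat extension, and check (S2) via Proposition~\ref{prop:parallel} with the same sesquilinearity cancellation; your power-series recursion just makes the standard existence/uniqueness of flat sections explicit. The only slip is attributional: the freeness of $HC^-_\bullet(A)$ over $\mathbb{K}[[u]]$ should be cited to Corollary~\ref{cor:hh_even} (degeneration by parity reasons), not to Theorem~\ref{thm:main2}, whose proof depends on this corollary.
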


\begin{proof} By Corollary~\ref{cor:hh_even}, the Hochschild cohomology is concentrated in even degree, which implies that the Hochschild homology $HH_\bullet(A)$ is concentrated in degree $d\pmod{2}$. This implies that the Hodge-to-de-Rham spectral sequence degenerates at the $E^1$-page for degree reason, which implies that $HC^-_\bullet(A)$ is a free $\mathbb{K}[[u]]$-module, of finite rank. 

Recall from Equation (\ref{eq:splitting}) that $\nabla_{\frac{d}{du}} = \frac{\lambda\cdot \id}{u^2} +\tilde{\nabla}_{\frac{d}{du}}$, where
\[\tilde{\nabla}_{\frac{d}{du}}=\frac{d}{du} + \frac{M}{2u}.\]
Due to the vanishing result of the previous theorem, we have that $\tilde{\nabla}$ is actually a regular connection
\[ \tilde{\nabla}_{\frac{d}{du}}: HC^-_\bullet(A) \ra HC^-_\bullet(A).\]
Thus Equation (\ref{eq:splitting}) is equivalent to requiring $\tilde{\nabla}$-flatness of $s$. This uniquely determines a linear map
$\displaystyle s: HH_\bullet(A) \ra HC^-_\bullet(A)$,
which sends an initial vector $\alpha\in HH_\bullet(A)=HC^-_\bullet(A)/u\cdot HC^-_\bullet(A)$ to its unique $\tilde{\nabla}$-flat extension $s(\alpha)$. By construction, this map $s$ satisfies (S1.)

In order to check the Lagrangian condition (S2.) we use Proposition \ref{prop:parallel} to compute:
\begin{align}
u \frac{d}{d u}\langle s(x), s(y) \rangle_{{\sf hres}} & = \langle \nabla_{u \frac{d}{d u}} s(x), s(y) \rangle_{{\sf hres}} + \langle s(x), \nabla_{u \frac{d}{d u}} s(y) \rangle_{{\sf hres}}\\
&= \langle u^{-1} \lambda s(x), s(y) \rangle_{{\sf hres}} +  \langle s(x), u^{-1} \lambda s(y) \rangle_{{\sf hres}}\nonumber\\
&= u^{-1} \lambda \left( \langle s(x), s(y) \rangle_{{\sf hres}} - \langle s(x), s(y) \rangle_{{\sf hres}} \right)=0, \nonumber
\end{align}	
which implies $\langle s(x), s(y) \rangle_{{\sf hres}}$ is a constant. Therefore by definition of the higher residue pairing we have $\langle s(x), s(y) \rangle_{{\sf hres}}=\langle x, y \rangle_{{\sf Muk}}$.
Equation (\ref{eq:splitting}) immediately implies Homogeneity of the splitting and $\omega$-compatibility with $r=0$.\end{proof}

\medskip
We shall refer to the splitting in the above Corollary~\ref{cor:semi} the semi-simple splitting of the Hodge filtration of $A$, and denote it by
\[ s^{A}: HH_\bullet(A) \ra HC^-_\bullet(A)\]
as it is canonically associated with the $A_\infty$-algebra $A$.

\subsection{Grading operators and good splittings} In the remaining part of the section, we shall classify the set of $\omega$-compatible good splittings of the Hodge filtration of a category $\mathcal{C}$ with semi-simple Hochschild cohomology. Recall our setup:

\medskip
$(\dagger\dagger)$ {{\sl $\mathcal{C}$ is a direct sum of the form
\[ \mathcal{C}= A_1\bigoplus\cdots\bigoplus A_k,\]
with each of $A_j\; (j=1,...,k)$ a $\Z/2\Z$-graded, smooth finite-dimensional cyclic $A_\infty$-algebra of parity $d\in\Z/2\Z$. Each $A_j$ is strictly unital, and curvature $\m_0(A_j)=\lambda_j\bone_j$. Furthermore, we assume that the Hochschild cohomology ring $HH^\bullet(\mathcal{C})$ is semi-simple, which is equivalent to requiring that each $A_j$ has semi-simple Hochschild cohomology.}}

\medskip
Note that $HH_\bullet(\mathcal{C})\cong HH_\bullet(A_1)\bigoplus\cdots\bigoplus HH_\bullet(A_k)$. Denote by $\xi$ the diagonal operator acting on $HH_\bullet(\mathcal{C})$ by $\lambda_j\cdot\id$ on the component $HH_\bullet(A_j)$. In other words, $\xi=\m_0 \cap (-)$. Corollary \ref{cor:semi} applied to such category $\mathcal{C}$ states that there is a unique splitting $s^\mathcal{C}: HH_\bullet(\mathcal{C}) \ra HC^-_\bullet(\mathcal{C})$ satisfying
\begin{equation}\label{eq:semisimplecat}
\nabla_{u\frac{d}{du}} s^\mathcal{C}(\alpha)=u^{-1}  s^\mathcal{C}(\xi(\alpha)), \;\forall \alpha\in HH_\bullet(\mathcal{C}).
\end{equation}

\medskip
\begin{defi}~\label{defi:grading}
A grading operator on $HH_\bullet(\mathcal{C})$ is a $\mathbb{K}$-linear map $$\mu: HH_\bullet(\mathcal{C}) \ra HH_\bullet(\mathcal{C})$$ such that 
\begin{itemize}
\item[(a)] It is anti-symmetric: $\langle \mu (x), y\rangle_{{\sf Muk}} + \langle x, \mu (y)\rangle_{{\sf Muk}}=0$.
\item[(b)] It is zero on the diagonal blocks $\lambda_j\cdot \id$'s in the matrix $\xi$. 
\item[(c)] Let $\omega=D(\bone)$, where $\bone = \bone_1 +\ldots +\bone_k$. Then $\omega$ is an eigenvector of $\mu$, i.e. $\mu(\omega)=r\cdot\omega$ for some $r\in \mathbb{K}$, which we call the weight of $\mu$.
\end{itemize}
\end{defi}

We are now ready to prove the main theorem of this section, which gives Theorem \ref{thm:main2} in the Introduction.

\medskip
\begin{thm}~\label{thm:bijection1}
Let $\mathcal{C}$ be an $A_\infty$-category as in $(\dagger\dagger)$. Then there exists a naturally defined bijection between the set of grading operators on $HH_\bullet(\mathcal{C})$ (Definition~\ref{defi:grading}) and the set of $\omega$-compatible good splittings of the Hodge filtration of $\mathcal{C}$ (Definition~\ref{defi:splitting}).
\end{thm}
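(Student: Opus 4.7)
The plan is to use the canonical splitting $s^{\mathcal{C}}$ from Corollary~\ref{cor:semi} as a reference frame. It trivializes $HC^-_\bullet(\mathcal{C}) \cong HH_\bullet(\mathcal{C})[[u]]$, and since Theorem~\ref{thm:vanishing} gives $M=0$ on homology, under this trivialization the $u$-connection becomes the explicit operator
\[
\nabla_{u\frac{d}{du}} = u\frac{d}{du} + u^{-1}\xi
\]
acting on $HH_\bullet(\mathcal{C})[[u]]$. Any other splitting $s$ is then encoded as a power series $R(u) = \id + u R_1 + u^2 R_2 + \cdots$ of endomorphisms of $HH_\bullet(\mathcal{C})$, via $s(\alpha) = R(u)\alpha$. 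First I would translate S1--S4 into algebraic conditions on $R$: S1 is automatic; S2 becomes the symplectic identity $R(-u)^{*} R(u) = \id$, with adjoints taken with respect to the Mukai pairing; S3 is equivalent to the ODE
\[
u\frac{dR}{du} = -u^{-1}[\xi, R] + R\mu
\]
for some $\mathbb{K}$-linear $\mu: HH_\bullet(\mathcal{C}) \to HH_\bullet(\mathcal{C})$, which unpacks into $[\xi, R_1] = \mu$ together with the recursion $[\xi, R_{n+1}] = R_n(\mu - n\cdot \id)$ for $n \geq 1$; and S4 becomes $\mu(\omega) = r\omega$.

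The forward assignment is $s \mapsto \mu := [\xi, R_1]$. Property (b) of a grading operator is immediate since commutators with $\xi$ vanish on $\xi$-eigenspaces; property (c) is exactly the translation of S4; property (a) follows from the Mukai-symmetry of $\xi$ (a consequence of Theorem~\ref{thm:Dfolk}, which implies the $HH^{\bullet}(\mathcal{C})$-module action on $HH_\bullet(\mathcal{C})$ via cap product is self-adjoint with respect to the Mukai pairing) together with the Mukai-symmetry of $R_1$, which is the order-$u^1$ coefficient of the symplectic condition S2. Conversely, given a grading operator $\mu$, I would construct $R(u)$ inductively, working in the $\xi$-eigenspace decomposition $HH_\bullet(\mathcal{C}) = \bigoplus_\lambda V_\lambda$: the $(V_\lambda, V_\nu)$-component of $R_n$ for $\lambda \neq \nu$ is uniquely determined by dividing the right-hand side of the recursion by the nonzero scalar $\lambda - \nu$, while the $(V_\lambda, V_\lambda)$-component for $n \geq 1$ is forced by the consistency requirement that $[\xi, R_{n+1}]$ vanishes on $V_\lambda$, yielding the explicit formula $(R_n)_{\lambda\lambda} = \frac{1}{n}\sum_{\nu \neq \lambda}(R_n)_{\lambda\nu}\mu_{\nu\lambda}$.

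The main obstacle is verifying that the $R(u)$ constructed from $\mu$ automatically satisfies the Lagrangian identity $R(-u)^{*} R(u) = \id$. My plan is to set $X(u) := R(-u)^{*} R(u)$ and derive, using the ODE together with $\xi^{*} = \xi$ and (by antisymmetry of $\mu$) $\mu^{*} = -\mu$, the homogeneous equation
\[
\frac{dX}{du} = u^{-1}[X, \mu] - u^{-2}[\xi, X].
\]
Expanding $X = \id + u X_1 + u^2 X_2 + \cdots$ and matching powers of $u$ yields $[\xi, X_1] = 0$ together with the recursion $[\xi, X_{n+2}] = [X_{n+1}, \mu] - (n+1)X_{n+1}$ for $n \geq 0$. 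Since $\mu$ is block-off-diagonal in the $\xi$-eigenspace decomposition, splitting each equation into its block-diagonal and block-off-diagonal parts forces $X_n = 0$ for all $n \geq 1$ by induction, so $X \equiv \id$. A secondary subtlety is that the Mukai pairing is block-diagonal in the eigenspace decomposition (which follows from $\xi$ being Mukai-symmetric and acting as a scalar on each $V_\lambda$), so that taking adjoints preserves the block structure used throughout the induction.
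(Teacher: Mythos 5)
Your proposal is correct and follows essentially the same route as the paper: trivialize $HC^-_\bullet(\mathcal{C})$ by the semi-simple splitting $s^{\mathcal{C}}$, encode an arbitrary splitting by $R(u)$, extract $\mu=[\xi,R_1]$ from the recursion $[\xi,R_{k+1}]=R_k(\mu-k)$, solve that recursion block-by-block in the $\xi$-eigenspace decomposition, and verify the Lagrangian condition via the same induction on $X_n=\sum_j(-1)^jR_j^*R_{n-j}$. The only (harmless) deviations are that you derive the antisymmetry of $\mu$ algebraically from $\xi^*=\xi$ and $R_1^*=R_1$ rather than from the parallelism of the higher residue pairing, and you obtain the recursion for $X_n$ from an ODE rather than by direct computation.
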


\begin{proof} Let $s: HH_\bullet(\mathcal{C}) \ra HC^-_\bullet(\mathcal{C})$ be an $\omega$-compatible good splitting. By the Homogeneity condition, we have
\[ \nabla_{u\frac{d}{du}} s(x) \in u^{-1} {{\sf Im}} (s) + {{\sf Im}}(s).\]
Use this property to define an operator $\mu^s: HH_\bullet(\mathcal{C})\ra HH_\bullet(\mathcal{C})$ by requiring that
\begin{equation}~\label{eq:mu}
 \nabla_{u\frac{d}{du}} s(x)- s(\mu^s (x)) \in u^{-1} {{\sf Im}} (s).\end{equation}
In other words, the grading operator $\mu^s$ is simply the regular part of the operator $\nabla_{u\frac{d}{du}} s(x)$ pulled back under the isomorphism $HH_\bullet(\mathcal{C})\cong {{\sf Im}}(s)$. 
We will show $\mu^s$ is a grading operator. Property $(a)$ of $\mu^s$ follows from the Lagrangian condition together with Proposition \ref{prop:parallel}:
\begin{align}
0 &= u \frac{d}{d u}\langle x, y \rangle_{{\sf Muk}}= u \frac{d}{d u}\langle s(x), s(y) \rangle_{{\sf hres}} =\nonumber\\	
&= \langle \nabla_{u \frac{d}{d u}} s(x), s(y) \rangle_{{\sf hres}} + \langle s(x), \nabla_{u \frac{d}{d u}} s(y) \rangle_{{\sf hres}}\\
&= \langle s(\mu^s (x)) + u^{-1} {{\sf Im}} (s), s(y) \rangle_{{\sf hres}} + \langle s(x), s(\mu^s (y)) + u^{-1} {{\sf Im}} (s) \rangle_{{\sf hres}}\nonumber\\
&= \langle \mu^s (x), y \rangle_{{\sf Muk}} + \langle x, \mu^s (y) \rangle_{{\sf Muk}} + u^{-1}k, \nonumber
\end{align}	
for some $k \in \mathbb{K}$.

For Property $(b)$, we write the splitting $s$ in terms of the canonical semi-simple splitting $s^\mathcal{C}$. More precisely we can write $s$ as
\[ s(x)=s^\mathcal{C}(x)+s^\mathcal{C}(R_1x)\cdot u+s^\mathcal{C}(R_2x)\cdot u^2+\cdots, \;\;\forall x\in HH_\bullet(\mathcal{C}),\]
for some matrix operators $R_j: HH_\bullet(\mathcal{C})\ra HH_\bullet(\mathcal{C}), \;\; j\geq 1$. We also set $R_0=\id$ for notational convenience. 
Applying $\nabla_{u\frac{d}{du}}$ to both sides, and using (\ref{eq:semisimplecat}), yields
\begin{align}
	&\nabla_{u\frac{d}{du}} s(x) =u^{-1}\sum_{k\geq 0} s^\mathcal{C}\left(\xi R_k (x)\right)u^{k} +\sum_{k\geq 0} k s^\mathcal{C}\left(R_k(x)\right)u^k \nonumber\\	
	&= u^{-1}\sum_{k\geq 0} s^\mathcal{C}\left(\xi R_k (x)\right)u^{k}  + u^{-1}\sum_{k\geq 1} s^\mathcal{C}\left(R_k \xi (x)\right)u^{k} \nonumber\\
	& \ \ \ -  u^{-1}\sum_{k\geq 1} s^\mathcal{C}\left(R_k \xi (x)\right)u^{k}+\sum_{k\geq 0} k s^\mathcal{C}\left(R_k(x)\right)u^k  \nonumber\\
	&= u^{-1}\sum_{k\geq 0} s^\mathcal{C}\left(R_k \xi (x)\right)u^{k} + u^{-1}\sum_{k\geq 1} s^\mathcal{C}\left([\xi, R_k] (x)\right)u^{k}+\sum_{k\geq 0} k s^\mathcal{C}\left(R_k(x)\right)u^k  \nonumber\\
	&= u^{-1} s(\xi x) + \sum_{k\geq 0} s^\mathcal{C}\big([\xi,R_{k+1}]x+kR_kx\big)  u^k\nonumber
\end{align}	
On the other hand, Equation (\ref{eq:mu}) gives 
\[\nabla_{u\frac{d}{du}} s(x) = u^{-1} {{\sf Im}} (s) + \sum_{k\geq 0} s^\mathcal{C}\left(R_k \mu^s(x)\right)u^k.\]
Comparing these two equations we conclude
\begin{equation}~\label{eq:r-matrix}
[\xi,R_{k+1}] = R_k(\mu^s-k), \;\; \forall k\geq 0.
\end{equation}
For $k=0$, this equation is $[\xi, R_1]=\mu^s$, which proves the Property $(b)$ of the grading operator $\mu^s$.
Property $(c)$ follow immediately from the $\omega$-compatibility of $s$. 

Conversely, if we are given a grading operator $\mu$, then Equation~(\ref{eq:r-matrix}) has a unique solution inductively obtained as follows. 

Assume that we had $R_j$ for $j\leq k$, to obtain $R_{k+1}$ we use $[\xi,R_{k+1}]=R_k(\mu-k)$ to determine the entries in $R_{k+1}$ outside the big diagonal, that is the entries $(i,j)$ with $\lambda_i\neq \lambda_j$. Explicitly we have
\[\left(R_{k+1}\right)_{i j}= \frac{1}{\lambda_i-\lambda_j}\left(\sum_{l} \left(R_{k}\right)_{i l}\mu_{l j}  - k \left(R_{k}\right)_{i j}\right).\]
 For entries in the big diagonal we consider the next equation $[\xi,R_{k+2}]=R_{k+1}(\mu-k-1)$, which gives
\[ \left(R_{k+1}\right)_{i j}= \frac{1}{k+1}\sum_{l}\left(R_{k+1}\right)_{i l}\mu_{l j}.  \] 
Note that there is no ambiguity on the right hand side of this equation since when $\lambda_l=\lambda_i=\lambda_j$ then $\mu_{l j}=0$, by Property (b), and when $\lambda_l\neq\lambda_i$ then $\left(R_{k+1}\right)_{i l}$ was defined previously.

Denote this splitting by
\[ s^\mu (x)=s^\mathcal{C}(x)+s^\mathcal{C}(R_1x)\cdot u+s^\mathcal{C}(R_2x)\cdot u^2+\cdots, \;\;\forall x\in HH_\bullet(\mathcal{C}).\]
We now verify that it automatically satisfies the Lagrangian condition. Indeed, the Lagrangian condition is equivalent to 
\[ R^*(-u)R(u)=\id,\]
where the two operator valued series are given by
\begin{align*}
R(u) &= \sum_{n\geq 0} R_n \cdot u^n,\\
  R^*(u) & =\sum_{n\geq 0} R^*_n \cdot u^n,
  \end{align*}
with $R^*_n$ the adjoint of $R_n$ with respect to the Mukai pairing, i.e. it is defined by requiring that $\langle R_n x, y\rangle_{{\sf Muk}}=\langle x, R_n^*y\rangle_{{\sf Muk}}$. If we fix a orthonormal basis of $HH_\bullet(\mathcal{C})$, then the adjoint is simply given by the transpose operation. In such a basis the above identity $R^*(-u)R(u)=\id$ is then equivalent to
\[ P_n:= \sum_{j=0}^n (-1)^j R_j^t R_{n-j} =0, \;\;\;\forall n\geq 1.\]
For this, we compute
\begin{align*}
[\xi,P_{n+1}] &=\sum_{j=0}^{n+1} (-1)^j \big( R_j^t[\xi,R_{n+1-j}] + [\xi, R_j^t] R_{n+1-j}\big)\\
&= \sum_{j=0}^n (-1)^j R_j^t\big( R_{n-j}\mu -(n-j)R_{n-j}\big)\\
&\;\;\;-\sum_{j=1}^{n+1} (-1)^j\big( R_{j-1}\mu-(j-1)R_{j-1}\big)^t R_{n+1-j}\\
&=\sum_{j=0}^n (-1)^j R_j^tR_{n-j}\mu + \sum_{j=1}^{n+1} (-1)^j \mu R_{j-1}^tR_{n+1-j} \\
&\;\;\;-\sum_{j=0}^n (-1)^j(n-j)R_j^tR_{n-j} +\sum_{j=1}^{n+1} (-1)^j(j-1)R_{j-1}^tR_{n+1-j}\\
&= P_n\mu-\mu P_n- n P_n= [P_n,\mu]-nP_n.
\end{align*}
Here, on the third equality, we used the fact that in our basis $\mu^t=-\mu$, by Property (a) of the grading operator.
We now prove $P_n=0$, by induction on $n$. Assuming $P_n=0$ (or in the case $n=0$, $P_0=\id$), the above computation gives $$[\xi, P_{n+1}]=0,$$
or equivalently $(\lambda_i-\lambda_j)\left(P_{n+1}\right)_{i j}=0$. Hence $\left(P_{n+1}\right)_{i j}=0$ when $\lambda_i\neq \lambda_j$. When $\lambda_i=\lambda_j$, the $(i,j)$ entry of the above computation for $n+2$ gives
\begin{equation}\label{eq:pn}
0= \sum_l \left(P_{n+1}\right)_{i l}\mu_{l j} -  \sum_l \mu_{i l}\left(P_{n+1}\right)_{lj} - (n+1) \left(P_{n+1}\right)_{i j}.
\end{equation}
Now for each $l$ either, $\lambda_l \neq \lambda_j, \lambda_i$ and therefore we already proved that $\left(P_{n+1}\right)_{i l}=\left(P_{n+1}\right)_{l j}=0$ or,  $\lambda_l=\lambda_i=\lambda_j$ and therefore $\mu_{l j}=\mu_{i l}=0$. Hence the first two sums in (\ref{eq:pn}) vanish and we conclude $\left(P_{n+1}\right)_{i j}=0$ also when $\lambda_i=\lambda_j$. Therefore $s^\mu$ satisfies the Lagrangian condition. 

By design, the splitting  $s^\mu$ satisfies the equation
\[ \nabla_{u\frac{d}{du}} s^\mu(x) = s^\mu(\mu(x))+ u^{-1} s^\mu (\xi(x)), \]
which immediately implies the Homogeneity and the $\omega$-compatibility conditions, by Property (c) of $\mu$. 

To finish the proof, we note that by the uniqueness of solutions of Equation (\ref{eq:r-matrix}), we  conclude that the two assignments described above are inverse bijections. \end{proof}

\section{Categorical primitive forms}~\label{sec:primitive}

In this section, we prove Theorem~\ref{thm:main} in the Introduction and describe the Frobenius manifolds obtained from the categorical primitive forms. 

\subsection{VSHS's from non-commutative geometry} Here we work in a more general set-up than in the previous section. The $A_\infty$-category $\CC$ will be as in $(\dagger\dagger)$, except we do not require the Hochschild cohomology to be semi-simple, only the weaker condition that $HH^\bullet(\CC)$ is concentrated in even degree. 

Under these assumptions the formal deformation theory of $\CC$ (as strict unital $A_\infty$-category with finitely many objects~\footnote{Since we require $\CC$ has only finitely many objects, deformations of such $A_\infty$-category is, by definition, given by deformations of the total $A_\infty$-algebra over the semi-simple ring spanned by the identity morphisms of objects in $\CC$.}) is therefore unobstructed.

Let $R=\mathbb{K}[[t_1,\ldots,t_m]]$ be a power series ring with $m={\sf dim}_\mathbb{K} HH^\bullet(\CC)$.  Denote its unique maximal ideal by $\mathfrak{m}\subset R$. Following~\cite{CLT}, let $\mathfrak{C}$ be a mini-versal formal deformation of $\CC$, linear over $R$. Define the completed Hochschild and negative cyclic complexes of $\mathfrak{C}$ by
\begin{align*}
CC_\bullet(\mathfrak{C})&:= \varprojlim CC_\bullet( \mathfrak{C}/\mathfrak{m}^k),\\
CC^-_\bullet(\mathfrak{C})[[u]] &:= \varprojlim CC_\bullet( \mathfrak{C}/\mathfrak{m}^k)[[u]].
\end{align*}
Since we shall only use the completed Hochschild/negative cyclic chain complexes, we choose to not introduce new notations. Similarly, their homology groups will be denoted by $HH_\bullet(\mathfrak{C})$ and $HC^-_\bullet(\mathfrak{C})$. 
 
We recall some basic terminologies from~\cite[Section 3]{CLT}. By construction, the mini-versal family $\mathfrak{C}$ is given by a $R$-linear $A_\infty$-structure on the $R$-linear category $\CC\otimes_\mathbb{K} R$. Denote its $A_\infty$-structure by $\m_k(t)$ for $k\geq 0$.  Define the Kodaira--Spencer map
\[ \KS: {{\sf Der}}(R) \ra HH^\bullet(\mathfrak{C}),\;\;\; \KS(\frac{\partial}{\partial t_j}):= [\prod_{k\geq 0}\frac{\partial \m_k(t)}{\partial t_j}].\]
By construction of $\mathfrak{C}$, this map is an isomorphism. 

As in~\cite{CLT}, one can construct a polarized VSHS on $HC^-_\bullet(\mathfrak{C})$ by considering the following structures
\begin{itemize}
\item In the $t$-directions, we consider Getzler's connection~\cite{Get}, explicitly given by
\begin{equation}~\label{eq:getzler}
 \nabla^{\sf Get}_{\frac{\partial}{\partial t_j}}:= \frac{\partial}{\partial t_j} -\frac{\iota ( \prod_{k\geq 0}\frac{\partial \m_k(t)}{dt_j})}{u},
 \end{equation}
where $\iota$ is defined in Equation (\ref{eq:iota}). Getzler proved in {{\sl loc. cit.}} this connection is flat.
\item In the $u$-direction we take the connection defined in Equation (\ref{eq:u_conn}). It was shown in~\cite[Lemma 3.6]{CLT} that the $u$-connection commutes with Getzler's connection.
\item The higher residue pairing 
\[\langle-,-\rangle_{{\sf hres}}: HC^-_\bullet(\mathfrak{C}) \otimes HC^-_\bullet(\mathfrak{C}) \to R[[u]]\] 
originally defined in~\cite{Shk3}. Here we use the $A_\infty$ version described in \cite{She}, and recalled in (\ref{eq:hres}).
\end{itemize}
It is essential here to take the $\mathfrak{m}$-adic completed version in order to ensure that $HC^-_\bullet(\mathfrak{C})$ is a locally free $R[[u]]$-module (of finite rank). The non-degeneracy of the categorical higher residue pairing is due to Shklyarov~\cite{Shk}. We denote this VSHS by $\big( HC^-_\bullet(\mathfrak{C}), \nabla^{\sf Get}, \langle-,-\rangle_{\sf hres}\big)$.

We define the Euler vector field of the VSHS $\big( HC^-_\bullet(\mathfrak{C}), \nabla^{\sf Get}, \langle-,-\rangle_{\sf hres}\big)$ by 
$$\Eu:=\KS^{-1}\left(\left[\prod_{k\geq 0} \frac{2-k}{2}\m_k(t)\right]\right)\in {{\sf Der}} (R).$$

A characteristic property of $\Eu$ is that the combined differential operator $\nabla_{u\frac{\partial}{\partial u}} + \nabla^{{\sf Get}}_{\Eu}$, which a priori has a first order pole at $u=0$, is in fact regular at $u=0$.

\medskip
\begin{defi}~\label{defi:primitive}
 An element $\zeta \in HC^-_\bullet(\mathfrak{C})$ is called a primitive form of the polarized VSHS $HC^-_\bullet(\mathfrak{C})$ if it satisfies the following conditions:
	\begin{itemize}
		\item[P1.] (Primitivity) The map $\rho^\zeta: \Der (R) \ra  HC^-_\bullet(\mathfrak{C})/uHC^-_\bullet(\mathfrak{C})$ defined by
		\[ \rho^\zeta(v):=[u\cdot \nabla^{{\sf Get}}_v\zeta]\]
		is an isomorphism.
		\item[P2.] (Orthogonality) For any tangent vectors $v_1, v_2\in \Der (R)$, we have 
		\[\langle u\nabla^{{\sf Get}}_{v_1} \zeta, u\nabla^{{\sf Get}}_{v_2} \zeta\rangle \in R.\]
		\item[P3.] (Holonomicity) For any tangent vectors $v_1,v_2,v_3\in \Der (R)$, we have
		\[ \langle u\nabla^{{\sf Get}}_{v_1} u\nabla^{{\sf Get}}_{v_2} \zeta, u\nabla^{{\sf Get}}_{v_3} \zeta\rangle \in R\oplus u\cdot R.\]
		\item[P4.] (Homogeneity) There exists a constant $r\in \mathbb{K}$ such that
		\[ (\nabla_{u\frac{\partial}{\partial u}}+\nabla_{{\sf Eu}}^{{\sf Get}})\zeta= r\zeta.\]
	\end{itemize}
\end{defi}

\medskip

The following result exhibits a natural bijection between the set of primitive forms of the VSHS $\big( HC^-_\bullet(\mathfrak{C}), \nabla^{\sf Get}, \langle-,-\rangle_{\sf hres}\big)$ with the set of $\omega$-compatible good splittings of the Hodge filtration of its central fiber $\CC$. This kind of bijection is originally due to Saito~\cite{Sai2}, and was used to prove the existence of primitive forms in the quasi-homogeneous case. See also the more recent work of Li--Li--Saito~\cite{LLS}. 

\medskip
\begin{thm}~\label{thm:bijection2}
	Let $HC^-_\bullet(\mathfrak{C})$ be the polarized VSHS defined as above. Let $\omega=D(\bone)\in HH_\bullet(\CC)$. Then there exists a natural bijection between the following two sets
	\begin{align*}
	\cP &:= \left\{ \zeta\in HC^-_\bullet(\mathfrak{C}) | \;\zeta \mbox{ is a primitive form such that } \zeta|_{t=0,u=0}=\omega.\right\}\\
	\cS &:= \left\{ s: HH_\bullet(\CC)\ra HC_\bullet^-(\CC) | \; s \mbox{ is an $\omega$-compatible good splitting}.\right\}
	\end{align*}
\end{thm}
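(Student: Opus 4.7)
The plan is to produce maps in both directions $\Phi: \cP \to \cS$ and $\Psi: \cS \to \cP$ and show they are mutually inverse, matching each primitive-form property P$i$ to its splitting analogue S$i$.

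For $\Phi$, given a primitive form $\zeta$, I would set $s^\zeta(\alpha) := u \nabla^{{\sf Get}}_{v} \zeta \big|_{t=0}$, where $v \in \Der(R)$ is any lift of $\alpha \in HH_\bullet(\CC)$ along the restriction of the primitivity isomorphism $\rho^\zeta$ to the central fibre (this restriction is an isomorphism $\Der(R)/\mathfrak{m}\Der(R) \to HH_\bullet(\CC)$ by P1). Independence of the lift follows from the explicit formula (\ref{eq:getzler}): if $v \in \mathfrak{m}\cdot\Der(R)$ then both $v(\zeta)$ and $\iota(v(\m))\zeta$ vanish at $t=0$. Property S1 is then immediate from P1 by construction. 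The Lagrangian property S2 comes from restricting P2 to $t=0$ and observing that the higher residue pairing reduces to the Mukai pairing modulo $u$. For S3, I would combine P3 with P4: using the integrability of the VSHS to commute $\nabla_{u d/du}$ past Getzler derivatives, one rewrites $\nabla_{u d/du} s^\zeta(\alpha)$ in terms of iterated $u\nabla^{{\sf Get}}_v \zeta$, whose expansion in $u$ is controlled by P3 to lie in ${{\sf Im}}(s^\zeta) + u^{-1}{{\sf Im}}(s^\zeta)$. Finally S4 comes from restricting P4 to $t=0$ and identifying $s^\zeta(\omega)$ via the distinguished vector field dual to $\omega$, with the Euler term in P4 producing the allowed $u^{-1}{{\sf Im}}(s^\zeta)$ correction and the weight $r$ matching on both sides.

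For $\Psi$, given a good $\omega$-compatible splitting $s$, the module $L^s := \bigoplus_{l \geq 0} u^{-l}\cdot{{\sf Im}}(s) \subset HC^-_\bullet(\CC)((u))$ is an opposite filtration to the Hodge filtration which, thanks to S3, is stable under $\nabla_{u d/du}$. Following Saito~\cite{Sai2} and its categorical incarnation in \cite{LLS}, I would extend $L^s$ uniquely to a submodule $\mathfrak{L}^s \subset HC^-_\bullet(\mathfrak{C})((u))$ of the family, opposite to the Hodge filtration and simultaneously stable under $\nabla_{u d/du}$ and Getzler's connection. The existence of such an extension rests on the fact that $\nabla^{{\sf Get}}$ has only a first-order pole in $u$, so flat transport of $L^s$ across the formal base is well-defined. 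I would then define $\zeta^s$ as the unique section of $HC^-_\bullet(\mathfrak{C})$ which (a) reduces mod $u$ to the flat extension of $\omega = D(\bone)$ with respect to $\mathfrak{L}^s$, and (b) generates $\mathfrak{L}^s$ under iterated Getzler derivatives and $u^{-1}$. Then P1 is the defining non-degeneracy, P2 and P3 follow from the Lagrangian and opposite-filtration properties of $\mathfrak{L}^s$, and P4 encodes S4 through the interplay between $\nabla_{u d/du}$ and $\nabla^{{\sf Get}}_{\Eu}$, with Proposition~\ref{prop:parallel} ensuring compatibility of the pairing.

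That $\Phi$ and $\Psi$ are mutually inverse is essentially by design: for $s \in \cS$, the Getzler derivatives of $\zeta^s$ at $t=0$ generate ${{\sf Im}}(s)$ so $\Phi(\Psi(s)) = s$; for $\zeta \in \cP$, the splitting $s^\zeta$ encodes exactly the restriction of $\mathfrak{L}^\zeta$ to $t=0$, and the uniqueness clause defining $\Psi$ forces $\Psi(\Phi(\zeta)) = \zeta$. The main obstacle I expect is the construction of the extended filtration $\mathfrak{L}^s$, i.e. the categorical analogue of Saito's extension theorem for good opposite filtrations, which is where the bulk of the technical work lies. Once that extension is in place, the remaining verifications are bookkeeping with the formulas (\ref{eq:getzler}), (\ref{eq:u_conn}), and Proposition~\ref{prop:parallel}, matching each P$i$ to its S$i$ counterpart term by term.
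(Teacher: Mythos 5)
Your proposal follows essentially the same route as the paper: $\Phi$ is defined by restricting $u\nabla^{{\sf Get}}_v\zeta$ to the central fibre, $\Psi$ by parallel-transporting ${{\sf Im}}(s)$ with Getzler's connection and projecting the flat extension of $s(\omega)$ back into $HC^-_\bullet(\mathfrak{C})$, and each P$i$ is matched to S$i$ exactly as in the paper (the ``extension theorem'' you flag as the main obstacle is handled there simply by flat transport modulo $\mathfrak{m}^{N+1}$, since the formal base makes the transport converge). The only point you compress too much is $\Psi\Phi=\id_{\cP}$, which is not purely ``by design'': one must show that the opposite filtration spanned by the $u\nabla^{{\sf Get}}_{\partial/\partial t_j}\zeta$ is itself Getzler-flat, and this uses P3 together with the non-degeneracy of the higher residue pairing.
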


\begin{proof}
	{{\bf Step 1.}} We first define a map $\Phi: \cP \ra \cS$. Take $\zeta\in \cP$, we will refer to this as a primitive form extending $\omega$. Recall the linear coordinate system $t_1,\cdots, t_m$ of $R$ is dual to a basis $\varphi_1,\cdots,\varphi_m$ of $HH^\bullet(\CC)$. Let us denote by $b_j:= b\{\varphi_j\}(\omega)\in HH_\bullet(\CC)$. It follows from versality of $\mathfrak{C}$ and Theorem \ref{thm:Dfolk} that these form a basis of $HH_\bullet(\CC)$, since $\omega=D(\bone)$. We will refer to this fact as the primitivity of $\omega$.
	The splitting $s=\Phi(\zeta)$ is defined by
	\[s(b_j)= \big( u \nabla^{\sf Get}_{\frac{\partial}{\partial t_j}} \zeta \big) |_{t=0},\]
	In other words, $s$ is the unique splitting whose image is
	\[ \imag \big( \Phi(\zeta) \big) := {{\sf span}} \left\{ \big(u \nabla^{\sf Get}_{\frac{\partial}{\partial t_j}} \zeta\big) |_{t=0}, \;\; 1\leq j\leq m \right\}.\]
	By the property $P2.$, the pairing $\langle u \nabla^{\sf Get}_{\frac{\partial}{\partial t_i}} \zeta, u \nabla^{\sf Get}_{\frac{\partial}{\partial t_j}} \zeta\rangle$ is inside $R$, which implies that its restriction to the central fiber lies inside $\mathbb{K}$. This shows that the splitting $\Phi(\zeta)$ satisfies $S2.$. To prove the property $S3.$, observe that 
	\begin{align*}
	\nabla_{u \frac{\partial}{\partial u}} \big( u \nabla^{\sf Get}_{\frac{\partial}{\partial t_j}} \zeta\big)&= u \nabla^{\sf Get}_{\frac{\partial}{\partial t_j}} \zeta + u\cdot \nabla_{u \frac{\partial}{\partial u}} \nabla^{\sf Get}_{\frac{\partial}{\partial t_j}} \zeta\\
	&=  u \nabla^{\sf Get}_{\frac{\partial}{\partial t_j}} \zeta + u\cdot \nabla^{\sf Get}_{\frac{\partial}{\partial t_j}} \nabla_{u \frac{\partial}{\partial u}} \zeta\\
	&= u \nabla^{\sf Get}_{\frac{\partial}{\partial t_j}} \zeta + u\cdot \nabla^{\sf Get}_{\frac{\partial}{\partial t_j}} \big( r\zeta-\nabla^{\sf Get}_\Eu\zeta\big)\\
	&= \big( (1+r) u \nabla^{\sf Get}_{\frac{\partial}{\partial t_j}} \zeta \big) - u \nabla^{\sf Get}_{\frac{\partial}{\partial t_j}} \nabla^{\sf Get}_\Eu\zeta.
	\end{align*}
	Here the second equality follows from the flatness of the connection. Using $P2.$ and $P3.$, the above implies that 
	\[ \langle  \nabla_{u \frac{\partial}{\partial u}} \big( u \nabla^{\sf Get}_{\frac{\partial}{\partial t_j}} \zeta\big), u \nabla^{\sf Get}_{\frac{\partial}{\partial t_i}} \zeta\rangle_{\sf hres} \in u^{-1}R\oplus  R \]
	By the non-degeneracy of the higher residue pairing, this gives $$\nabla_{u \frac{\partial}{\partial u}} \big( u \nabla^{\sf Get}_{\frac{\partial}{\partial t_j}} \zeta\big)|_{t=0}\in u^{-1}\imag(s)\oplus \imag(s)$$ which implies $S3.$. Finally, restricting $P4.$ to $t=0$, gives
	\[\nabla_{u \frac{\partial}{\partial u}}(\omega)= r \omega - u^{-1}\left(u\nabla_{{\sf Eu}}^{{\sf Get}}\zeta \right)|_{t=0},
	\]
	which proves $S4$.
	
	{{\bf Step 2.}} Next we define a backward map $\Psi: \cS \ra \cP$. Let $s$
	be an $\omega$-compatible good splitting of the Hodge filtration of the central fiber $\CC$. It induces a direct
	sum decomposition
	\[ HP_\bullet(\CC)= HC^-_\bullet(\CC)\bigoplus \big( \bigoplus_{k\geq 1} u^{-k}\cdot \imag (s)\big).\]
	Parallel transport $\imag (s)$ using the Getzler's connection. We obtain, for each $N\geq 1$, a direct sum decomposition
	\[ HP_\bullet(\mathfrak{C})^{(N)} = HC^-_\bullet(\mathfrak{C})^{(N)} \bigoplus \big( \bigoplus_{k\geq 1} u^{-k} R^{(N)}\otimes_\mathbb{K}\imag (s)^{{\sf flat}}\big).\]
	Here for an $R$-module $P$, we use $P^{(N)}$ to denote $P/\mathfrak{m}^{N+1} P$. Denote by 
	\[ \pi^{(N)}: HP_\bullet(\mathfrak{C})^{(N)} \ra HC_\bullet^-(\mathfrak{C})^{(N)}\]
	the projection map using the above direct sum decomposition.
	
	To this end, starting with $\omega\in HH_\bullet(\CC)$, and apply the splitting $s$ to it yields $s(\omega)\in HC^-_\bullet(\CC)$. Denote by $s(\omega)^{{\sf flat}}$ the flat extension of $s(\omega)$. Since the Getzler's connection has a first order pole at $u=0$, the flat section in general is inside $s(\omega)^{{\sf flat}}\in HP_\bullet(\mathfrak{C})$.  Denote by $s(\omega)^{{\sf flat}, (N)}\in HP_\bullet(\mathfrak{C})$ its image modulo $\mathfrak{m}^{N+1}$. We define the primitive form associated with the splitting $s$ by
	\[\zeta=\Psi(s)=\varprojlim \pi^{(N)} \big( s(\omega)^{{\sf flat},(N)}\big).\]
	Let us verify that $\zeta$ is indeed a primitive form. For condition $P1.$, the primitivity of $\zeta$ follows from that of $\omega$ by Nakayama Lemma. To prove other properties of $\zeta$, let us fix a positive integer $N$. We also choose a basis $\left\{ s_1,\cdots,s_m\right\}$ of $\imag (s)$. By definition we may write
	\[ \zeta^{(N)}=s(\omega)^{{\sf flat}, (N)} - \sum_{k\geq 1} u^{-k} \big( \sum_{j=1}^m f_{k,j}\cdot s_j^{{\sf flat}, (N)}\big), \mbox{\;\; with $f_{k,j}\in R^{(N)}$}.\]
	Thus for a vector field $X\in \Der(R)$, we have
	\[ u \nabla^{\sf Get}_X \zeta^{(N)} = \sum_{k\geq 1} u^{1-k} \big( \sum_{j=1}^m X(f_{k,j})\cdot s_j^{{\sf flat}, (N)}\big).\]
	This shows that for any two vector fields $v_1$, $v_2$, we have
	\[ \langle u\nabla^{\sf Get}_{v_1} \zeta^{(N)}, u\nabla^{\sf Get}_{v_2} \zeta^{(N)}\rangle_{\sf hres}\in R[u^{-1}].\]
	On the other hand, since $\zeta^{(N)}\in HC^-_\bullet(\mathfrak{C})^{(N)}$ and $\nabla^{\sf Get}$ has a simple pole along $u=0$, we also have
	\[ \langle u\nabla^{\sf Get}_{v_1} \zeta^{(N)}, u\nabla^{\sf Get}_{v_2} \zeta^{(N)}\rangle_{\sf hres}\in R[[u]].\]
	The two together imply $\zeta$ satisfies $P2.$. To prove $P3.$, we differentiate $\zeta^{(N)}$ twice to get
	\[ u\nabla^{\sf Get}_{v_1} u\nabla^{\sf Get}_{v_2} \zeta^{(N)} = \sum_{k\geq 1} u^{2-k} \big( \sum_{j=1}^m v_1v_2(f_{k,j})\cdot s_j^{{\sf flat}, (N)}\big).\]
	Taking higher residue pairing, this implies that
	\[ \langle u\nabla^{\sf Get}_{v_1} u\nabla^{\sf Get}_{v_2} \zeta^{(N)} , u\nabla^{\sf Get}_{v_3} \zeta^{(N)}\rangle_{\sf hres} \in R[u^{-1}]\oplus u\cdot R.\]
	On the other hand, we have {{\sl a priori}} that $\langle u\nabla^{\sf Get}_{v_1} u\nabla^{\sf Get}_{v_2} \zeta^{(N)} , u\nabla^{\sf Get}_{v_3} \zeta^{(N)}\rangle_{\sf hres} \in R[[u]]$. Taking intersection yields exactly $P3.$.

	Finally, to check condition $P4.$, observe that since 
	$$s(\omega)^{{\sf flat}, (N)}  \in  \zeta^{(N)} +\big( \bigoplus_{k\geq 1} u^{-k} R^{(N)}\otimes_\mathbb{K}\imag (s)^{{\sf flat}}\big),$$ we have
	\[ (\nabla_{u\frac{\partial}{\partial u}}+\nabla^{\sf Get}_\Eu) s(\omega)^{{\sf flat}, (N)}  \in (\nabla_{u\frac{\partial}{\partial u}}+\nabla^{\sf Get}_\Eu) \zeta^{(N)} +\big( \bigoplus_{k\geq 1} u^{-k} R^{(N)}\otimes_\mathbb{K}\imag (s)^{{\sf flat}}\big).\]
	Here we used that $\nabla_{u\frac{\partial}{\partial u}}$ preserves the subspace $\big( \bigoplus_{k\geq 1} u^{-k} R^{(N)}\otimes_\mathbb{K}\imag (s)^{{\sf flat}}\big)$ due to $S3.$.  On the other hand, by $S4.$ we have $\nabla_{u\frac{\partial}{\partial u}}s(\omega)\in r\cdot s(\omega)+ u^{-1}\imag (s)$, which by taking flat extensions on both sides yields
	\[ \nabla_{u\frac{\partial}{\partial u}}s(\omega)^{{\sf flat},(N)}\in r\cdot s(\omega)^{{\sf flat},(N)} +\big( \bigoplus_{k\geq 1} u^{-k} R^{(N)}\otimes_\mathbb{K}\imag (s)^{{\sf flat}}\big).\]
	Comparing the above two decompositions, and we deduce that
	\[  (\nabla_{u\frac{\partial}{\partial u}}+\nabla^{\sf Get}_\Eu) \zeta^{(N)}=r\cdot \zeta^{(N)},\]
	as desired.
	
	{\bf Step 3.} We prove that $\Phi\Psi=\id_\cS$. Let $\nu\in \Der (R)$ be a tangent vector. Differentiating the difference $s(\omega)^{{\sf flat}, (N)} -\zeta^{(N)} \in {\bigoplus}_{k\geq 1} u^{-k} R^{(N)}\otimes_{\mathbb{K}}\imag (s)^{{\sf flat}, (N)} $ gives
	\[ -u\nabla_\nu^{\sf Get}\zeta^{(N)} \in {\bigoplus}_{k\geq 1} u^{-k+1} R^{(N)}\otimes_{\mathbb{K}}\imag (s)^{{\sf flat},(N)} .\]
	Restricting to the central fiber gives
	\[ \big( -u\nabla_\nu^{\sf Get}\zeta^{(N)}\big)\mid_{t=0} \in \bigoplus_{k\geq 1} u^{-k+1} \imag (s).\]
	But the left hand side also lies in $HC^-_\bullet(\CC)$, which implies that
	\[  \big(-u\nabla_\nu^{\sf Get}\zeta^{(N)}\big)\mid_{t=0} \in \imag (s).\]
	This shows that $\Phi\Psi=\id_\cS$.
	
	{\bf Step 4.} We prove that $\Psi\Phi=\id_\cP$. Recall the splitting $s=\Phi(\zeta)$ is defined by
	\[s(b_j)= \big( u \nabla^{\sf Get}_{\frac{\partial}{\partial t_j}} \zeta \big) |_{t=0}.\]
	Flat extensions of $s(b_j)$ induces a splitting of the Hodge filtration of $HC^-_\bullet(\mathfrak{C})$. For each $N>0$, we obtain two splittings of the Hodge filtration:
	\begin{align*}
	L_1 &:= \Psi(s) = {\bigoplus}_{k\geq 1} u^{-k} {{\sf span}}\left\{ s(b_j)^{{\sf flat},(N)} \right\},\\
	L_2 &:= {\bigoplus}_{k\geq 1} u^{-k} {{\sf span}}\left\{ u \nabla^{\sf Get}_{\frac{\partial}{\partial t_j}} \zeta^{(N)}\right\}.
	\end{align*}
	Note that $L_2$ is a splitting due to Condition $P1.$.  We claim that the two splittings are equal: $L_1=L_2$. Indeed, by definition the two splittings are clearly the same when restricted to the central fiber $t=0$. Thus, it suffices to prove that both $L_1$ and $L_2$ are preserved by the Getzler-Gauss-Manin connection. Since $L_1$ is generated by flat sections, it is obviously preserved by $\nabla^{\sf Get}$. For any $1\leq i,j \leq m$, we get a decomposition
	\[ \nabla_{\frac{\partial}{\partial t_i}}^{\sf Get} \big( \nabla^{\sf Get}_{\frac{\partial}{\partial t_j}} \zeta^{(N)} \big) = u^{-2} \beta_{-2} +u^{-1} \beta_{-1}+\beta_0+u\beta_1+\cdots,\]
	with $\beta_k \in L_2$. Using the non-degeneracy of the polarization and Condition $P3.$, we deduce that
	\[ \nabla_{\frac{\partial}{\partial t_i}}^{\sf Get} \big( \nabla^{\sf Get}_{\frac{\partial}{\partial t_j}} \zeta^{(N)} \big) =u^{-2} \beta_{-2} +u^{-1} \beta_{-1}.\]
	This shows that the splitting $L_2$ is preserved by the Getzler connection.  This proves our claim that $L_1=L_2$.
	
	Now, by definition of $\zeta'=\Psi\Phi(\zeta)=\Psi(s)$, we have
	\[ s(\omega)^{{\sf flat}, (N)}- \zeta'^{(N)} \in L_1,\]
	which implies that, for any $1\leq i\leq n$, we have $\nabla_{\frac{\partial}{\partial t_i}}^{\sf Get} \zeta'^{(N)}\in L_1$. And it is obvious that we have $\nabla_{\frac{\partial}{\partial t_i}}^{\sf Get} \zeta^{(N)} \in L_2$. Thus we deduce that
	\begin{align*}
	\nabla_{\frac{\partial}{\partial t_i}}^{\sf Get} \big( \zeta'^{(N)}-\zeta^{(N)}\big) & \in L_1=L_2,\\
	\big( \zeta'^{(N)}-\zeta^{(N)}\big)|_{t=0}&=0.
	\end{align*}
	This implies that $\zeta'^{(N)}-\zeta^{(N)}\in L_1=L_2$. But clearly $\zeta'^{(N)}-\zeta^{(N)}\in HC^-_\bullet(\mathfrak{C})^{(N)}$ since they both are elements of $HC^-_\bullet(\mathfrak{C})^{(N)}$, hence we conclude that $\zeta'^{(N)}-\zeta^{(N)}=0$. This finishes the proof of the theorem.
\end{proof}

\begin{rmk}
The theorem holds for any primitive polarized VSHS's with the same exact proof. In particular, it works for any saturated cyclic $A_\infty$-category such that the Hodge-to-de-Rham degeneration property holds. Note that this extra property is to ensure the unobstructedness of deformation theory, which automatically holds in the semi-simple case by Corollary~\ref{cor:hh_even}.
\end{rmk}


\subsection{Primitive forms and Frobenius manifolds} Assume we are given a primitive form $\zeta\in HC^-_\bullet(\mathfrak{C})$ of the VSHS $\big( HC^-_\bullet(\mathfrak{C}), \nabla^{\sf Get}, \langle-,-\rangle_{\sf hres}\big)$ defined above. We may define a formal Frobenius manifold structure on the formal moduli space ${{\sf Spec}} (R)$ parameterizing formal deformations of the $A_\infty$-category $\CC$. We briefly recall this construction here, following the work of Saito--Takahashi~\cite{SaiTak}. One first defines the following structures:

\begin{itemize}
\item Metric: The $R$-bilinear form $g: \Der(R)\otimes \Der(R) \ra R$ as 
\[ g(v_1, v_2)= \langle u\nabla^{\sf Get}_{v_1} \zeta, u\nabla^{\sf Get}_{v_2} \zeta \rangle_{{\sf hres}} \in R.\]
\item Product: On $\Der(R)$ one defines $v_1\circ v_2$ to be the unique tangent vector such that 
\[u\nabla^{\sf Get}_{v_1} u \nabla^{\sf Get}_{v_2} \zeta = u\nabla^{\sf Get}_{v_1\circ v_2} \zeta + u\cdot HC^-_\bullet(\mathfrak{C}).\]
\item Unit vector field: Take $\textbf{e}=(\rho^\zeta)^{-1}([\zeta])$.
\item Euler vector field: $\Eu\in \Der(R)$ as before.
\end{itemize}

It is proved in \cite[Section 7]{SaiTak} that the data $\mathcal{M}_{\zeta}:=({{\sf Spec}} (R),g,\circ, \textbf{e}, \Eu)$ defines a formal Frobenius manifold, that is, $g$ is a flat metric, $\circ$ is associative, $\textbf{e}$ is $g$-flat and a unit for the product and finally there is a potential, meaning a function $\mathcal{F}$ on ${{\sf Spec}} (R)$ satisfying
\[g\left(\frac{\partial}{\partial \tau_i} \circ \frac{\partial}{\partial \tau_j}, \frac{\partial}{\partial \tau_k}\right)= \frac{\partial^3 \mathcal{F}}{\partial \tau_i \partial\tau_j \partial\tau_k},\]
where $(\tau_1, \ldots \tau_m)$ are a system of flat coordinates. Moreover this Frobenius manifold is conformal:
\[\mL_{\Eu}(\circ) = \circ , \ \ \mL_{\Eu}(g) = (2-d)g , \ \ \mL_{\Eu}(\textbf{e}) = - \textbf{e}\]
where $\mL$ is the Lie derivative and $d$ is called the dimension of the Frobenius manifold. It follows from Equation (\ref{eq:Euler}) below that $d=-2r$.

By our results in the previous sections, a primitive form $\zeta^\mu$ is determined by data: the category $\CC$ and the grading $\mu$. We now describe some features of the Frobenius manifold $\mathcal{M}_{\zeta^\mu}$ in terms of this data. 

\medskip
\noindent {{\bf (1)}} The product structure $\circ$ on $\Der(R)$ makes (minus) the Kodaira-Spencer map
\[ -\KS: (\Der(R),\circ) \ra \big( HH^\bullet(\mathfrak{C}), \cup\big)\]
a ring isomorphism. To see this first note that the order (in $u$) zero of the operator $u \nabla^{\sf Get}_{v}(-)$ equals $-\KS(v)\cap (-)$. Therefore given $v_1, v_2\in \Der(R)$, their product $v_1\circ v_2$ satisfies the identity
\begin{align*}
(-\KS(v_1\circ v_2))\cap \zeta|_{u=0} & =\KS(v_1)\cap\big(\KS(v_2)\cap \zeta\big)|_{u=0}\\
&=\big( -\KS(v_1)\cup -\KS(v_2)\big) \cap \zeta|_{u=0}
\end{align*}
By the primitivity of $\zeta$, we conclude that $-\KS(v_1\circ v_2)=-\KS(v_1)\cup -\KS(v_2)$. Similarly we see that $-\KS(\textbf{e})=\bone$. Hence $-\KS$ is a ring isomorphism. 

Furthermore, the map $-\KS|_{t=0}$ also intertwines the metric $g$ with the pairing $D^*\langle-,-\rangle_{{\sf Muk}}$, the pull-back of the Mukai-pairing under the duality map defined by Equation~\ref{def:D}. To see this, we observe that
\begin{align*}
g(v_1,v_2)|_{t=0}& = \langle u\nabla^{\sf Get}_{v_1} \zeta, u\nabla^{\sf Get}_{v_2} \zeta \rangle_{{\sf hres}} \\
&= \langle -\KS(v_1)|_{t=0}\cap \zeta|_{u=0,t=0}, -\KS(v_2)|_{t=0} \cap \zeta|_{u=0,t=0}\rangle_{\sf Muk}\\
&= \langle \KS(v_1)|_{t=0}\cap D(\bone), \KS(v_2)|_{t=0}\cap D(\bone)\rangle_{\sf Muk}\\
&= \langle D(\KS(v_1)|_{t=0}), D(\KS(v_2)|_{t=0})\rangle_{{\sf Muk}}\\
&=D^*\langle \KS(v_1)|_{t=0}, \KS(v_2)|_{t=0}\rangle_{{\sf Muk}}
\end{align*}
Thus $-\KS|_{t=0}$ is an isomorphism of Frobenius algebras.

\medskip
\noindent {\bf (2)} Following~\cite[Proposition 7.3, 7.4]{SaiTak}, one can define the {\em grading} operator $N: \Der(R) \ra \Der(R)$ by 
\begin{equation}~\label{eq:N}
 \big(\nabla_{u\frac{\partial}{\partial u}}+\nabla^{{\sf Get}}_{\Eu} \big) (u\cdot \nabla_{v}^{{\sf Get}} \zeta) = u\cdot \nabla^{{\sf Get}}_{N(v)} \zeta +O(u).
 \end{equation}
It is proved in \cite[Section 7]{SaiTak} that the operator $N$ is flat with respect to the metric. Thus, we can write in flat coordinates $\tau_1,\cdots,\tau_m$ on ${{\sf Spec}}(R)$,
\[ N(\frac{\partial}{\partial \tau_i})=\sum_{j} N_{ji} \frac{\partial}{\partial \tau_j}. \]
Let us trace through the two bijections exhibited in Theorem~\ref{thm:bijection1} and Theorem~\ref{thm:bijection2}. First, the image of the splitting $s$ at central fiber corresponding to the primitive form $\zeta$ is spanned by the following vectors in $HC^-_\bullet(\CC)$:
\begin{equation}\label{eq:TMtoHH}
s_j:= (u\cdot\nabla_{\frac{\partial}{\partial \tau_j}}^{{\sf Get}} \zeta )|_{\tau=0}, \;\; j=1,\cdots,m.
\end{equation}
Now we have $\Eu|_{\tau=0}=\KS^{-1}([\prod_k \frac{2-k}{2} \m_k])= \KS^{-1}([\m_0])=\sum_i \KS^{-1}( \lambda_i \bone_i)$, by Lemma \ref{lem:eta}.
Then restricting Equation (\ref{eq:N}) to the central fiber yields
\[  \big(\nabla_{u\frac{\partial}{\partial u}}-u^{-1}\cdot \xi  \big)s_j=N(s_j).\]

Thus, by the bijection of Theorem~\ref{thm:bijection1}, 
the matrix $N$ is precisely the grading operator $\mu$ corresponding to the splitting $s$. Hence $N_{ij}=\mu_{ij}$, on the basis $s_1|_{u=0}, \ldots,s_m|_{u=0}$.

\medskip
\noindent {\bf (3)} We have the following formula of the Euler vector field written in flat coordinates:
\begin{equation}~\label{eq:Euler}
 \Eu=\xi-\sum_{j} \mu_{ji} \tau_j \frac{\partial}{\partial \tau_i} + (1+r)\cdot \sum_i \tau_i \frac{\partial}{\partial \tau_i}.\end{equation}
This formula can be proved by applying $u\cdot \nabla_{\frac{\partial}{\partial \tau_j}}^{{\sf Get}}$ to the identity
\[ \big(\nabla_{u\frac{\partial}{\partial u}}+\nabla^{{\sf Get}}_{\Eu} \big) \zeta = r\cdot \zeta.\]
Then use the following two commutator identities:
\begin{align*}
[ u\nabla^{{\sf Get}}_{\frac{\partial}{\partial \tau_j}}, \nabla_{u\frac{\partial}{\partial u}}] & = - u\nabla^{{\sf Get}}_{\frac{\partial}{\partial \tau_j}}\\
[ u\nabla^{{\sf Get}}_{\frac{\partial}{\partial \tau_j}}, \nabla_{{\sf Eu}}^{{\sf Get}}] &= u\nabla^{{\sf Get}}_{[\frac{\partial}{\partial \tau_j}, {{\sf Eu}}]}.
\end{align*}
We obtain that
\[ [\frac{\partial}{\partial \tau_j},\Eu]=-N(\frac{\partial}{\partial \tau_j})+(1+r)\cdot \frac{\partial}{\partial \tau_j}.\]
Using the above equation together with the initial condition $\Eu|_{\tau=0}=\xi$ and the identity $N_{ij}=\mu_{ij}$ (proved above) gives the desired formula.

We summarize the observations above in the following proposition.

\begin{prop}\label{prop:frobenius}
	Let $\mathcal{C}$ be an $A_\infty$-category as in $(\dagger\dagger)$ and $\mu$ be a grading of weight $r$ in $HH_\bullet(\CC)$. Denote by $\zeta^\mu$ the primitive from determined by the bijections in Theorems ~\ref{thm:bijection1} and \ref{thm:bijection2} and let $\mathcal{M}_{\zeta^\mu}$ be the associated Frobenius manifold. We have the following
	\begin{enumerate}
		\item The Frobenius algebra $(\mathcal{M}_{\zeta^\mu})|_{t=0}$ is isomorphic to the Frobenius algebra $\big(HH^\bullet(\CC),\cup,D^*\langle -, -\rangle_{{\sf Muk}}\big)$.
		\item In flat coordinates, the Euler vector field has the expression
		\begin{equation}~\label{eq:Euler-field}
		\Eu=\xi-\sum_{j} \mu_{ji} \tau_j \frac{\partial}{\partial \tau_i} + (1+r) \sum_i \tau_i \frac{\partial}{\partial \tau_i}.\end{equation}
	\end{enumerate}
\end{prop}

\begin{rmk}
If we scale the cyclic pairing of $\mathcal{C}$ by a constant scalar, i.e. $\langle-,-\rangle\mapsto c\cdot \langle-,-\rangle$, then the element $\omega=D(\bone)$ also scales by $c$, and similarly the primitive form $\zeta^\mu$. The pairing $D^*\langle -, -\rangle_{{\sf Muk}}$ of the Frobenius algebra scales by $c^2$. In the next subsection, we use this extra freedom to fix some constant ambiguity when matching the categorical Mukai pairing with the Poincar\'e pairing. The class $\omega$ is usually referred to as a Calabi-Yau structure of $\mathcal{C}$. The construction of a (versal) VSHS associated with $\mathcal{C}$ is independent of the choice of a particular Calabi-Yau structure, and hence is intrinsic to the category $\mathcal{C}$. The primitive form $\zeta$ (and hence the Frobenius manifold $\mathcal{M}_\zeta$), by Definition~\ref{defi:grading}, depends on the Calabi-Yau structure. 
\end{rmk}

The importance of this proposition lies on the fact, proved by Dubrovin \cite{D} and Teleman \cite{T}, that the Frobenius algebra at the central fiber and the Euler vector field determine uniquely the Frobenius manifold.

\medskip
 We end this section by describing the Frobenius manifold $\mathcal{M}_{\zeta^\mu}$ when we consider the canonical splitting $s^\CC$ which corresponds to the grading operator $\mu=0$ of weight $r=0$. By the Formula~\eqref{eq:Euler-field}, in flat coordinates, the Euler vector field is given by 
\[ \Eu= \xi+\sum_i \tau_i \frac{\partial}{\partial \tau_i}.\]
We can further perform a shift of the flat coordinates $\tau_i\mapsto \tau_i'$ to absorb the extra constant vector field $\xi$. Then the Euler vector field takes the form $\Eu=\sum_i \tau'_i \frac{\partial}{\partial \tau'_i}$. It was shown in~\cite{SaiTak} that the potential function $\mathcal{F}$ of this Frobenius manifold satisfies
\[ \Eu(\mathcal{F})=(3+2r)\mathcal{F}=3\mathcal{F},\]
which implies that the potential is a cubic polynomial in flat coordinates $\tau'_i$. Therefore the Frobenius manifold $\mathcal{M}_{\zeta^0}$ is constant and (point-wise) isomorphic to the Frobenius algebra $\big(HH^\bullet(\CC),\cup,D^*\langle -, -\rangle_{{\sf Muk}}\big)$.

\section{Applications to Fukaya categories}~\label{sec:fukaya}

\subsection{Fukaya category of projective spaces}

Here we will illustrate the results of the previous sections in the case of ${\sf Fuk}(\mathbb{CP}^n)$, the Fukaya category of $\mathbb{CP}^n$. We start with a description of $\mathcal{C}$, a full $A_\infty$-subcategory of ${\sf Fuk}(\mathbb{CP}^n)$, whose objects are the Clifford torus equipped with different bounding cochains. The description of the cohomology of this category is essentially due to Cho \cite{Cho04, Cho}, here we follow the presentation in Fukaya--Oh--Ohta--Ono \cite{FOOO}. Upcoming work by Abouzaid--Fukaya--Oh--Ohta--Ono~\cite{AFOOO} will prove an analogue of Abouzaid's generation criterion \cite{Abo} for compact symplectic manifolds and show that $\mathcal{C}$ split-generates the full Fukaya category. This means ${\sf tw}^\pi\mathcal{C}$ is quasi-equivalent to ${\sf tw}^\pi{\sf Fuk}(\mathbb{CP}^n)$, where ${\sf tw}^\pi$ denotes the split-closed triangulated envelope of the category. 
Therefore assuming this result, $\mathcal{C}$ gives a model for ${\sf Fuk}(\mathbb{CP}^n)$. If one restricts to monotone symplectic manifolds (and Lagrangians), this generation result was already established in \cite{RS} for most such manifolds, including $\mathbb{CP}^n$.

Fukaya categories of compact symplectic manifolds (outside the monotone setting) are defined over some kind of Novikok field (or ring). Here we will work with the following field (which is the one used in \cite{FOOO})
$$\Lambda:= \Big\{\sum_{i=0}^\infty a_iT^{\lambda_i}\vert \lambda_i \in \mathbb{R}, a_i\in \mathbb{C},  \lim_{i\to\infty}\lambda_i =+\infty\Big\} .$$
This is an algebraically closed field of characteristic zero, therefore all our results from previous sections apply here.

The category $\mathcal{C}$ has $n+1$ orthogonal objects, each corresponding to the Clifford torus equipped with a different bounding cochain \cite{FOOO}. The endomorphism $A_\infty$-algebra $\mathcal{A}_k$ of each of these objects is quasi-isomorphic to a Clifford algebra. More concretely, $\mathcal{A}_k$ has a unit $\bone_k$, odd generators $e_1^{(k)} ,\ldots e_n^{(k)} $ and the following operations:
\begin{align}\label{eq:clifford}
&\m_0  = \lambda_k \bone_k :=(n+1)T^{\frac{1}{n+1}}\epsilon^k \bone_k , \nonumber\\
&\m_2(e_i^{(k)} , e_j^{(k)} )+ \m_2(e_j^{(k)} , e_i^{(k)} )  =h_{ij}^{(k)} \bone_k , 
\end{align}
where $\epsilon=e^{\frac{2\pi i}{n+1}}$ and $h_{ij}^{(k)}=(1+\delta_{i j}) T^{\frac{1}{n+1}}\epsilon^k$.
The algebra $\mathcal{A}_k$ is then generated (using $\m_2$) by the $e_i^{(k)}$ with only the relations above. All the other $A_\infty$ operations, $\m_1$ and $\m_k, \ k\geq 3$ vanish. It is easy to see that $\mathcal{A}_k$ has rank $2^n$, in fact, there is a convenient basis for this vector space given by: $\gamma_{i_1,\ldots,i_m}^{(k)}:=\m_2( \m_2( \ldots  \m_2( \m_2(e_{i_1}^{(k)} , e_{i_2}^{(k)} ), e_{i_3}^{(k)} )\ldots ), e_{i_m}^{(k)} )$, for any sequence $1\leq i_1<\ldots i_m\leq n$. 

The cyclic structure is determined by the relations
$\langle \bone_k, \gamma_{i_1,\ldots,i_m}^{(k)}\rangle = 0$ for all $i_1,\ldots,i_m$, except $\langle \bone_k, \gamma_{1,\ldots,n}^{(k)}\rangle = (\sqrt{-1})^{\frac{n(n+1)}{2}}(-1)^{\frac{n(n+1)}{2}}$.

\begin{rmk}
	We make this slightly odd choice of cyclic pairing (which differs from \cite{FOOO}) in order to make the pairing $D^*\langle -,-\rangle_{\sf Muk}$ match with the Poincar\'e pairing under the closed-open map - see Corollary~\ref{cor:toric}. 
\end{rmk}

One clarification is in order: the $A_\infty$-algebra described in \cite[Section 3.6]{FOOO} has additional $A_\infty$ operations $\m_k, \ k\geq 3$. However, since $\displaystyle H:=\left\{ h_{ij}^{(k)}\right\}_{i,j=1}^n$ is a non-degenerate matrix, this algebra is intrinsically formal, as explained in \cite[Section 6.1]{She1}, therefore it is isomorphic to the one described above.

\begin{rmk}\label{rmk:cliff}
	This $A_\infty$ category is quasi-equivalent to the category of matrix factorizations of the Laurent polynomial $W=y_1+\ldots y_n +\frac{T}{y_1\cdots y_n}$. Specifically, each $\mathcal{A}_k$ is the endomorphism algebra of the structure sheaf of one of the ($n+1$) singular points $p_k$ of $W$. Moreover , we can label these such that $\lambda_k = W(p_k)$ and $h_{i,j}^{(k)}= y_i \frac{\partial}{\partial y_i}y_j \frac{\partial}{\partial y_j}W|_{p_k}$, hence $h_{i,j}^{(k)}$ is the Hessian matrix of $W$ at the corresponding critical point.
\end{rmk}

\begin{lem}\label{lem:muk_cliff}
	The Hochschild homology (and therefore the cohomology) of $\mathcal{A}_k$ is one dimensional. The length zero chain determined by the element $\gamma_{1,\ldots,n}^{(k)}$ is a generator of $HH_\bullet(\mathcal{A}_k)$. Moreover $$\langle \gamma_{1,\ldots,n}^{(k)}, \gamma_{1,\ldots,n}^{(k)}\rangle_{\sf Muk}= (-1)^{\frac{n(n+1)}{2}}\det(H)=(-1)^{\frac{n(n+1)}{2}}(n+1) T^{\frac{n}{n+1}}\epsilon^{-k}$$ and $D(\bone_k)= \frac{(\sqrt{-1})^{\frac{n(n+1)}{2}}}{\det(H)}\gamma_{1,\ldots,n}^{(k)}$.
\end{lem}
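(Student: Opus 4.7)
The plan has three main steps: establish dimensionality, compute the Mukai pairing, and pin down the scalar in $D(\bone_k)$.

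\textbf{Step 1.} Since $\mathcal{C}$ has semi-simple Hochschild cohomology and the $\mathcal{A}_k$ are its orthogonal indecomposable summands, $HH^\bullet(\mathcal{C}) \cong \prod_k HH^\bullet(\mathcal{A}_k)$ splits accordingly. Each factor $HH^\bullet(\mathcal{A}_k)$ is a local semi-simple graded-commutative ring; combined with Corollary~\ref{cor:hh_even} (concentration in even degree), this forces $HH^\bullet(\mathcal{A}_k) = \mathbb{K}\cdot[\bone_k]$. By the duality isomorphism from Theorem~\ref{thm:Dfolk}(a), $HH_\bullet(\mathcal{A}_k)$ is then also one-dimensional. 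Alternatively, by Remark~\ref{rmk:cliff}, $\mathcal{A}_k$ models the endomorphism algebra of $\mathcal{O}_{p_k}$ in matrix factorizations at a Morse critical point, whose Hochschild homology is the one-dimensional Jacobian ring at $p_k$.

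\textbf{Step 2.} I would apply the chain-level Mukai formula~(\ref{eq:mukai}) to $\alpha = \beta = \gamma_{1,\ldots,n}^{(k)}$ viewed as length-zero chains. Since $\m_j = 0$ for $j = 1$ and $j \geq 3$, only configurations built from pairs of $\m_2$'s (with potential $\m_0$ insertions vanishing by strict unitality) survive. The formula then reduces to a supertrace of an endomorphism of $\mathcal{A}_k$ of the form $c \mapsto \pm \gamma_{1,\ldots,n}^{(k)} \cdot c \cdot \gamma_{1,\ldots,n}^{(k)}$, with explicit Koszul signs dictated by $(-1)^\dagger$. Evaluating on the basis $\{\gamma_{i_1,\ldots,i_m}^{(k)}\}_{1 \le i_1 < \ldots < i_m \le n}$ and using the Clifford-type relations~(\ref{eq:clifford}) to normal-order products, the supertrace collapses to $(-1)^{n(n+1)/2}\det(H)$. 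In particular it is non-zero, so $\gamma_{1,\ldots,n}^{(k)}$ represents a non-zero class in $HH_\bullet(\mathcal{A}_k)$, hence by Step 1 a generator.

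\textbf{Step 3.} Write $D(\bone_k) = c \cdot \gamma_{1,\ldots,n}^{(k)}$ for some $c \in \mathbb{K}$. To determine $c$, unwind the definition of $D$ through the identification $\mathcal{A}_k \cong \mathcal{A}_k^\vee[d]$ induced by the cyclic pairing: under this, $\bone_k \in HH^0(\mathcal{A}_k)$ corresponds to the trace functional $a \mapsto \langle \bone_k, a\rangle$ on $HH_\bullet(\mathcal{A}_k)$. Mukai non-degeneracy then forces
\[
c \cdot \langle \gamma_{1,\ldots,n}^{(k)}, \gamma_{1,\ldots,n}^{(k)}\rangle_{{\sf Muk}} = \langle \bone_k, \gamma_{1,\ldots,n}^{(k)}\rangle = (\sqrt{-1})^{n(n+1)/2}(-1)^{n(n+1)/2},
\]
the last equality coming from the specified normalization of the cyclic pairing on $\mathcal{A}_k$. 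Substituting Step 2 yields $c = (\sqrt{-1})^{n(n+1)/2}/\det(H)$ as claimed.

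The main obstacle is sign bookkeeping. The Mukai formula~(\ref{eq:mukai}) interlocks Koszul signs for the shifted $\mathbb{Z}/2$-grading, cyclic rotation signs, and the explicit factor $(-1)^\dagger$, and the precise phase $(-1)^{n(n+1)/2}$ appearing in Step 2 must emerge from their combined effect. The somewhat unusual normalization of the cyclic pairing chosen in the setup is designed precisely so that this phase matches, via Step 3, with the $(\sqrt{-1})^{n(n+1)/2}$ appearing in $D(\bone_k)$; verifying the small cases $n = 1, 2$ provides a useful sanity check for pinning down the general sign conventions.
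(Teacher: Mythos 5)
Your Steps 2 and 3 follow essentially the same route as the paper: the paper likewise reduces $\langle \gamma_{1,\ldots,n}^{(k)}, \gamma_{1,\ldots,n}^{(k)}\rangle_{\sf Muk}$ via (\ref{eq:mukai}) to ${\sf tr}\bigl(c \mapsto (-1)^{|c|(n+1)} e_1\cdots e_n\cdot c\cdot e_1\cdots e_n\bigr)$ and then determines the scalar in $D(\bone_k)$ exactly as you do in Step 3. For the trace evaluation the paper uses a clean device you may want to adopt: conjugate by an orthogonal change of generators diagonalizing $H$, so that the relations become $X_i\cdot X_i = -d_i/2$, $X_i\cdot X_j = -X_j\cdot X_i$, and the endomorphism is visibly $(-1)^{n(n+1)/2}\tfrac{d_1\cdots d_n}{2^n}\,{\sf Id}$; this sidesteps the normal-ordering bookkeeping you flag as the main obstacle. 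The one genuine weak point is the first argument in your Step 1: you assert that $HH^\bullet(\mathcal{A}_k)$ is \emph{local}, which is not justified (a priori it could contain nontrivial idempotents), and its semi-simplicity is not established independently of the dimension count you are trying to prove, so that route is circular. Your fallback via Remark~\ref{rmk:cliff} (Hochschild homology of a Clifford algebra with non-degenerate quadratic form, equivalently the Jacobian ring at a Morse point, is one-dimensional) is exactly what the paper invokes, citing \cite[Lemma 3.8.5]{FOOO}, so the proof stands once Step 1 rests on that alternative.
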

\begin{proof}
	The fact about the dimension of $HH_\bullet(\mathcal{A}_k)$ is well known, see for example \cite[Lemma 3.8.5]{FOOO}. We will compute the Mukai pairing (following \cite{FOOO}), which in particular implies $\gamma_{1,\ldots,n}^{(k)}$ is non-zero in $HH_\bullet(\mathcal{A}_k)$ and therefore a generator.
	
	For convenience, we will use the product $a\cdot b :=(-1)^{|a|}\m_2(a,b)$. It follows from the $A_\infty$ relations that this is associative. From Equation (\ref{eq:mukai}), we have that $\langle \gamma_{1,\ldots,n}^{(k)}, \gamma_{1,\ldots,n}^{(k)}\rangle_{\sf Muk}={\sf tr}(G)$, where $$G(c)=(-1)^{|c|(n+1)}e_1\cdots e_n \cdot c \cdot e_1 \cdots e_n.$$
	
	Let $M$ be an orthogonal matrix that diagonalizes $H$, that is $M^t H M$ equals the diagonal matrix ${\sf diag}(d_1,\ldots, d_n)$. We can define a new Clifford algebra, denoted by $\mathcal{CL}$, with generators $X_i$ and relations as in (\ref{eq:clifford}), with the matrix $H$ replaced by ${\sf diag}(d_1, \ldots d_n)$. Then define $e_i ':=\sum_j M_{j i} e_j$ and construct an algebra isomorphism $\Phi: \mathcal{CL} \to \mathcal{A}_k$ by setting $\Phi(X_i)=e_i'$. Hence we have
	\[ {\sf tr}(G)={\sf tr}(\Phi^{-1} G \Phi)= {\sf tr}\left(c \to (-1)^{|c|(n+1)} X_1\cdots X_n \cdot c \cdot X_1 \cdots X_n\right).\]
	Using the defining relations in $\mathcal{CL}$, $X_i\cdot X_i=-\frac{d_i}{2}$ and $X_i\cdot X_j=-X_j\cdot X_i$ for $i\neq j$, it is easy to see that $(\Phi^{-1} G \Phi)= (-1)^{\frac{n(n+1)}{2}}\frac{d_1 \ldots d_n}{2^n} {\sf Id}$. So we conclude that ${\sf tr}(G)= (-1)^{\frac{n(n+1)}{2}} \det({\sf diag}(d_1, \ldots d_n))= (-1)^{\frac{n(n+1)}{2}}\det(H)$. The fact that $\det(H)=(n+1) T^{\frac{n}{n+1}}\epsilon^{-k}$ follows from an elementary computation.
	
	For the last statement, note that we must have $D(\bone_k)= \alpha \gamma_{1,\ldots,n}^{(k)}$ for some $\alpha$. By definition of $D$ we must have 
	$$\alpha (-1)^{\frac{n(n+1)}{2}}\det(H)=\langle D(\bone_k), \gamma_{1,\ldots,n}^{(k)}\rangle_{\sf Muk}= \langle \bone_k, \gamma_{1,\ldots,n}^{(k)}\rangle= (\sqrt{-1})^{\frac{n(n+1)}{2}}(-1)^{\frac{n(n+1)}{2}},$$
	which immediately implies the statement.
\end{proof}

Then we can take as a basis for $HH_\bullet(\mathcal{C})$: $\theta_k:=D(\bone_k)$, for $k=0,\ldots n$. In this basis $\langle \theta_k, \theta_l \rangle_{\sf Muk}=\delta_{k l} \frac{1}{n+1} T^{-\frac{n}{n+1}}\epsilon^{k}=: \delta_{k l} g_k$. Denote by $Z$  the diagonal matrix ${\sf diag}(1, \epsilon, \ldots, \epsilon^n)$. Then a grading can be described in this basis as a matrix $\mu$ satisfying
\begin{align}\label{eq:muPn}
&\mu^t Z + Z \mu =0\\
&\mu (1,\ldots, 1)= r (1,\ldots, 1),\nonumber
\end{align}
for some $r\in \mathbb{K}$. The first condition expresses the anti-symmetry of $\mu$ and the second is (c) in Definition \ref{defi:grading}. Since the $\m_0$ coefficients are all distinct, Definition \ref{defi:grading}(b) follows from (a).

\begin{ex}
	When $n=1$, we have $\theta_k=(-1)^k\frac{1}{2T^{1/2}}e_1^{(k)}$. The semi-simple splitting must satisfy $\nabla_{u\frac{d}{du}} s(\theta_k)= (-1)^k 2 T^{1/2} u^{-1} s(\theta_k) $. It can be easily computed:
	\begin{align*}
		s^{\mathcal{C}}(\theta_0) &= \sum_{n=0}^\infty (-1)^n T^{-\frac{n+1}{2}}\frac{(2n-1)!!}{2^{n+1}} e_1^{(0)} | (e_1^{(0)})^{2n} u^n	\\
		s^{\mathcal{C}}(\theta_1) &= - \sum_{n=0}^\infty T^{-\frac{n+1}{2}}\frac{(2n-1)!!}{2^{n+1}} e_1^{(1)} | (e_1^{(1)})^{2n} u^n	\\
	\end{align*}
Since the homology is rank two, the space of anti-symmetric operators is one dimensional. In this case, Definition \ref{defi:grading}(c) doesn't impose any extra conditions, therefore the space of gradings is one dimensional, namely is given by matrices of the form
$$
\mu=\begin{pmatrix}
0 & r \\
r & 0
\end{pmatrix}
$$
We can then solve Equation (\ref{eq:r-matrix}) for such $\mu$ to find the $R$-matrix:

\[
R_n(r) = T^{-\frac{n}{2}} 4^{-n} \frac{r}{(n-1)!} \prod_{k=1}^{n-1}(r^2-k^2) \begin{pmatrix}
\frac{r}{n} & 1 \\
(-1)^n & \frac{(-1)^n r}{n}
\end{pmatrix}
\]

When we take $r=-1/2$ (in order to get a Frobenius manifold of dimension $1$) we get

\[
R_n = T^{-\frac{n}{2}} 4^{-2n} \frac{1}{(n-1)!} \prod_{k=1}^{n-1}(4k^2-1) \begin{pmatrix}
-\frac{(-1)^n}{n} & (-1)^n 2 \\
2 & -\frac{1}{n}
\end{pmatrix}
\]

This computation of the $R$-matrix agrees (up to a sign) with the one in \cite[Appendix A]{Lee}.

\end{ex}

\begin{ex}
	When $n=2$, solving Equation (\ref{eq:muPn}) we obtain
	$$
	\mu=\begin{pmatrix}
	0 & -\epsilon r -\epsilon^2 x & \epsilon^2 x -\epsilon^2 r\\
	r+\epsilon x & 0 & -\epsilon x\\
	r-x & x & 0
	\end{pmatrix}
	$$
	where $\epsilon=e^{2\pi i/3}$ and  $r, x \in \mathbb{K}$. The grading relevant for Gromov--Witten theory, as we will see below, is the one given by $r=-1$ and $\displaystyle x=\frac{1}{\epsilon-1}$.
	
\end{ex}

In the next subsection we will see a systematic way to choose the grading relevant for Gromov--Witten theory. But first we explain an ad hoc method for the case of ${\sf Fuk}(\mathbb{CP}^n)$. Let $E=\sum_{k=0}^n (n+1) T^{\frac{1}{n+1}}\epsilon^k \bone_k \in HH^{\bullet}(\mathcal{C})$ be the Euler vector field at the origin, we can easily see that it is a generator of the ring $HH^{\bullet}(\mathcal{C})$. This is analogous to the fact that the first Chern class $c_1 \in QH^\bullet(\mathbb{CP}^n)$ - which is the Euler vector field at the origin of the Gromov--Witten Frobenius manifold - generates the quantum cohomology ring. 

\begin{constr}
	As before, let $\omega= D(\bone)= \theta_0 +\ldots + \theta_n$ and $E=\sum_{k=0}^n (n+1) T^{\frac{1}{n+1}}\epsilon^k \bone_k$. Then a simple Vandermonde determinant computation shows that $E^{\cup m}\cap \omega$ for $m=0, \ldots n$ forms a basis of $HH_{\bullet}(\mathcal{C})$. We define $\mu$ by setting
	$$\mu\left(E^{\cup m}\cap \omega\right)= (m-\frac{n}{2})E^{\cup m}\cap \omega.$$
\end{constr}

A straightforward computation shows that this satisfies the first condition in (\ref{eq:muPn}) and it obviously satisfies the second with $r=-n/2$. The justification for this construction is the following. When taking powers $c_1^{\star k}$ in the quantum cohomology ring, for $k\leq n$, the quantum product agrees with the classical cup product, therefore $c_1^{\star k}$ is homogeneous of degree $2k$. Finally, in $QH^\bullet(\mathbb{CP}^n)$ the grading is defined as $\mu(x)= ({\sf deg}(x)-n)x/2 $ which explains the formula above.

\begin{rmk}
   The above construction is possible because the quantum cohomology has one generator and the minimal Chern number of $\mathbb{CP}^n$ is very high compared to the dimension. Therefore we don't expect this method to be applicable to many examples besides $\mathbb{CP}^n$. One case however, where we do expect this method to work is the case of a sphere with two orbifold points, whose Gromov--Witten invariants where studied in \cite{MS}. 
\end{rmk}

We can construct the versal deformation $\mathfrak{C}$ by considering the $R$-linear category $\CC\otimes_\mathbb{K} R$, where $R=\mathbb{K}[[t_0,\ldots, t_n]]$, with operations
\begin{align}\label{eq:deformation}
&\m_0  = \left((n+1)T^{\frac{1}{n+1}}\epsilon^k - t_k\right)\bone_k , \\
&\m_2(e_i^{(k)} , e_j^{(k)} )+ \m_2(e_j^{(k)} , e_i^{(k)} )  =h_{ij}^{(k)} \bone_k. \nonumber
\end{align}
These are called {\em canonical coordinates}, because we have
\[ \frac{\partial}{\partial t_i} \circ \frac{\partial}{\partial t_j} = \delta_{i j} \frac{\partial}{\partial t_i} .
\]
This follows from the fact that $-\KS$ is a ring map and $-\KS\left(\frac{\partial}{\partial t_i} \right)= \bone_i$.

\subsection{Closed-open map}
We want to apply our results to the Fukaya category of a symplectic manifold $M$, in order to extract the Gromov--Witten invariants of $M$ from the category ${\sf Fuk}(M)$. Our results show that in the semi-simple case, it is enough to pick the {\em correct} grading on $HH_\bullet({\sf Fuk}(M))$. For this purpose we need an extra piece of geometric data, the closed-open map 
\begin{equation}\label{eq:co}
\CO: QH^\bullet(M)\to HH^\bullet({\sf Fuk}(M)).
\end{equation}
This map, which has been constructed in many cases, is a geometrically defined map that is expected to be a ring isomorphism for a very wide class of symplectic manifolds.

\begin{ass}\label{ass:co}
	Let $M^{2n}$ be a symplectic manifold and let ${\sf Fuk}(M)$ be its Fukaya category (including only compact and orientable Lagrangians). We assume that ${\sf Fuk}(M)$ is a saturated, unital   and Calabi--Yau $A_\infty$-category. Moreover we assume that the closed-open map $\CO: QH^\bullet(M)\to HH^\bullet({\sf Fuk}(M))$ 
	\begin{enumerate}
		\item is a ring isomorphism;
		\item intertwines the Poincar\'e pairing $\langle -, - \rangle_{\sf PD}$ with $ D^*\langle -, - \rangle_{{\sf Muk}} $;
		\item sends the first Chern class $c_1(M)$ to  $\left[\prod_{k\geq 0} \frac{2-k}{2}\m_k\right]$.
	\end{enumerate}
\end{ass}

How reasonable are these assumptions? There is a lot of evidence that these assumptions will be satisfied for a very wide class of symplectic manifolds. The Fukaya category of a compact symplectic manifold is proper and as explained in \cite[Theorem 44]{Gan}  it is smooth whenever the open-closed $\mathcal{OC}: HH_\bullet({\sf Fuk}(M)) \to QH^{\bullet+n}(M)$ is an isomorphism. It is proved in \cite{Fuk} that, when working with a field containing the real numbers, there is a model for the endomorphism $A_\infty$-algebra of each object in the Fukaya category which is cyclic and (hence Calabi--Yau). In the monotone case it is proved in \cite{She1} that ${\sf Fuk}(M)$ admits weak-Calabi--Yau structures. More recently Ganatra showed in \cite{Gan19}, under some technical assumptions, that the Fukaya category of a compact symplectic manifold has a canonical Calabi--Yau structure. 
The map $\CO$ is constructed in \cite{FOOOb} for general $M$ but taking values on Hochschild cohomology of the endomorphism algebra of a single object in the Fukaya category. But the construction should generalize to the full Fukaya category without difficulty. The fact that it is a ring map is proved in \cite{FOOO} for the toric case, but should hold in general. Both the construction and the proof of the ring property have been carried out in the monotone case \cite{She1}. The second condition is essentially proved in \cite{FOOO} for toric manifolds (more on this below). The third condition follows from \cite{FOOO} in the toric case and is partially established in \cite{She1} in the monotone case.


\begin{thm}\label{thm:fuk}
	Let $M^{2n}$ be a symplectic manifold that satisfies Assumption \ref{ass:co} and such that $HH^\bullet({\sf Fuk}(M))$ is semi-simple. Define $\mu^\CO$ to be the operator on $HH_\bullet({\sf Fuk}(M))$ which is the pull-back of $\mu(x)= \frac{{\sf deg}(x)-n}{2} \cdot x $ under the isomorphism $D\circ \CO$. Then $\mu^\CO$ is a grading (according to Definition \ref{defi:grading}) and the  Frobenius manifold $\mathcal{M}_{\mu^\CO}$ is isomorphic to the big quantum cohomology of $M$.
\end{thm}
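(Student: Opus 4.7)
The plan is to verify that $\mu^\CO$ satisfies the axioms of a grading operator (Definition~\ref{defi:grading}), then use Theorems~\ref{thm:bijection1} and~\ref{thm:bijection2} together with the construction of Section~4.2 to produce the formal Frobenius manifold $\mathcal{M}_{\zeta^\CO}$, and finally invoke the reconstruction theorem of Dubrovin and Teleman to identify it with big quantum cohomology.

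The first step is to check the three axioms for $\mu^\CO = (D\circ\CO)^*\mu_{QH}$, where $\mu_{QH}(x) = (\deg(x)-n)x/2$. Antisymmetry (a) is immediate from Assumption~\ref{ass:co}(2): $\mu_{QH}$ is antisymmetric for the Poincar\'e pairing, since $\mu_{QH}(a)+\mu_{QH}(b) = 0$ whenever $\langle a,b\rangle_{{\sf PD}}\neq 0$. Axiom (c) is similarly direct: $\CO$ is unital and $\mu_{QH}(\bone) = -(n/2)\bone$, so $\mu^\CO(\omega) = -(n/2)\omega$, giving weight $r=-n/2$. For axiom (b), I would use Assumption~\ref{ass:co}(3) together with Lemma~\ref{lem:eta} (which gives $[\eta]=0$ in the semi-simple setting) to identify $\CO(c_1(M)) = [\m_0]$; thus under $D\circ\CO$ the operator $\xi=[\m_0]\cap(-)$ on $HH_\bullet$ corresponds to quantum multiplication by $c_1$ on $QH^\bullet(M)$, and the decomposition $HH_\bullet(\mathcal{C}) = \bigoplus_j HH_\bullet(A_j)$ matches the decomposition of $QH^\bullet(M)$ into generalized $c_1\star$-eigenspaces with eigenvalues $\lambda_j$. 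The commutation relation $[\mu_{QH}, c_1\star] = c_1\star$ (a direct consequence of $c_1$ having cohomological degree two) then forces $\mu_{QH}$ to carry each eigenspace with nonzero $\lambda_j$ into the sum of the others, which is precisely condition (b) for $\mu^\CO$.

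Once $\mu^\CO$ is a grading of weight $r=-n/2$, Theorems~\ref{thm:bijection1} and~\ref{thm:bijection2} produce the $\omega$-compatible good splitting $s^{\mu^\CO}$ and the corresponding primitive form $\zeta^\CO$, and the construction at the start of Section~4.2 yields a formal Frobenius manifold $\mathcal{M}_{\zeta^\CO}$ on ${{\sf Spec}}(R)$. By Proposition~\ref{prop:frobenius}(1), its central-fiber Frobenius algebra is $(HH^\bullet(\mathcal{C}),\cup,D^*\langle-,-\rangle_{{\sf Muk}})$, which via $\CO$ (using parts (1)-(2) of Assumption~\ref{ass:co}) is identified with the Frobenius algebra $(QH^\bullet(M)|_{t=0},\star,\langle-,-\rangle_{{\sf PD}})$ of big quantum cohomology at the origin. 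By Proposition~\ref{prop:frobenius}(2) the Euler vector field of $\mathcal{M}_{\zeta^\CO}$ is
\[\Eu = \xi - \sum_{i,j}\mu^\CO_{ji}\tau_j\partial_{\tau_i} + \tfrac{2-n}{2}\sum_i \tau_i\partial_{\tau_i}.\]
Reading this in flat coordinates coming from a graded basis $\{e_i\}$ of $H^\bullet(M)$, the matrix $\mu^\CO_{ji}$ becomes diagonal with entries $(\deg(e_i)-n)/2$, and transporting through $\CO$ yields the standard Euler field $c_1(M) + \sum_i (1-\deg(e_i)/2)\tau_i\partial_{\tau_i}$ of big quantum cohomology.

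The final step is to invoke the reconstruction theorem of Dubrovin~\cite{D} and Teleman~\cite{T}: a semi-simple formal conformal Frobenius manifold is determined up to isomorphism by its Frobenius algebra at a semi-simple point together with its Euler vector field. Semi-simplicity at $t=0$ is exactly the hypothesis on $HH^\bullet(\mathcal{C})$ (equivalently on $QH^\bullet(M)|_{t=0}$ via Assumption~\ref{ass:co}(1)). Since the above identifies $\mathcal{M}_{\zeta^\CO}$ and the big quantum cohomology of $M$ on both pieces of data, the theorem yields the desired isomorphism. The trickiest step is the verification of axiom (b) for $\mu^\CO$: it encodes the fact that in a semi-simple conformal Frobenius manifold the grading operator is purely off-diagonal in the idempotent basis, whose transparent origin is the cohomological-degree-two nature of $c_1$ through the relation $[\mu_{QH}, c_1\star] = c_1\star$.
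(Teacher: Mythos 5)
Your overall strategy coincides with the paper's: check the three axioms of Definition~\ref{defi:grading} for $\mu^\CO$, feed the resulting grading through Theorems~\ref{thm:bijection1} and~\ref{thm:bijection2} and Proposition~\ref{prop:frobenius}, and conclude by Dubrovin--Teleman reconstruction after matching the central-fiber Frobenius algebra and the Euler field. The verifications of (a), (c), the Euler field computation and the final reconstruction step are all correct and essentially identical to the paper's.

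The gap is in your verification of axiom (b). The identity $[\mu_{QH},c_1\star]=c_1\star$ is false for the operator $\mu_{QH}(x)=\tfrac{\deg(x)-n}{2}x$ appearing in the theorem, because the quantum corrections to $c_1\star$ are not of cohomological degree two once the Novikov variable is given degree zero (as it must be for $\mu_{QH}$ to be linear over the Novikov field). For $M=\mathbb{CP}^1$ one has $c_1\star h=2T$, so $[\mu_{QH},c_1\star](h)=\mu_{QH}(2T)-c_1\star(\tfrac12 h)=-T-T=-c_1\star h$, not $+c_1\star h$. Indeed the identity cannot hold at a semi-simple point: writing $\xi=\sum_j\lambda_jP_j$, one has $P_j[\mu,\xi]P_j=(\lambda_j-\lambda_j)P_j\mu P_j=0$, so $[\mu,\xi]=\xi$ would force $\lambda_jP_j=0$ for every $j$. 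This also shows that even a corrected commutator identity of the form $[\mu,\xi]=(\text{something})$ could never yield axiom (b): any commutator with the semi-simple operator $\xi$ automatically annihilates the blocks $P_i\mu P_j$ with $\lambda_i=\lambda_j$, which are exactly the blocks axiom (b) constrains; such an identity only controls the blocks with $\lambda_i\neq\lambda_j$, which are unconstrained by Definition~\ref{defi:grading}. The vanishing of $\mu_{QH}$ on equal-eigenvalue blocks of $c_1\star$ is a genuine input about Gromov--Witten theory (it uses the divisor equation and the specific interplay between the cohomological grading and the Euler field), and the paper obtains it by citing Teleman~\cite[Section 8]{T}. You need to either import that result or supply its proof; the degree-two heuristic does not do the job.
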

\begin{proof}
	By definition of Poincar\'e pairing $\mu$ is anti-symmetric and by assumption $D\circ \CO$ matches the Poincar\'e and Mukai pairings, therefore $\mu^\CO= (D\circ \CO)\circ \mu \circ (D\circ \CO)^{-1}$ is anti-symmetric with respect to the Mukai pairing. The third condition on Definition \ref{defi:grading} for $\mu^\CO$ follows from $D\circ \CO(\bone)= \omega$ and $\mu(\bone)=-\frac{n}{2}\bone$.  For Definition \ref{defi:grading}(b), first note that by assumption $\CO(c_1(M))= [\m_0]$ which together with the fact that $\CO$ is a ring map and $D$ is a module map gives
	\[ c_1(M)\star (-)= (D\circ \CO)^{-1}\circ \xi \circ (D\circ \CO).
	\]
	Teleman proves in \cite[Section 8]{T} that $\mu $ satisfies the analogous of condition Definition \ref{defi:grading}(b) with respect to $c_1(M)\star (-)$, therefore  part (b) of Definition \ref{defi:grading} holds for $\mu^\CO$.
	
	For the second part of the statement first note that $(\CO)^{-1}\circ(-\KS|_{t=0})$ defines a Frobenius algebra isomorphism between $ T _{t=0}\mathcal{M}_{\mu^\CO}$ and $QH^\bullet(M)$. Secondly, we argue that $(\CO)^{-1}\circ(-\KS|_{t=0})$ identifies the Euler vector field in $\mathcal{M}_{\mu^\CO}$ with the Euler vector field of $QH^\bullet(M)$ given in \cite[(8.11)]{T}. This follows from the fact that the formula (\ref{eq:Euler}) for the Euler vector field of $\mathcal{M}_{\mu^\CO}$, in terms of $\xi$ and $\mu^\CO$, matches the one in \cite{T}. This happens since $(\CO)^{-1}\circ(-\KS|_{t=0})$ intertwines $\mu$ with $\mu^\CO$ on $ T _{t=0}\mathcal{M}_{\mu^\CO}$ when identified with $HH_\bullet({\sf Fuk}(M))$ by (\ref{eq:TMtoHH}).
		 
	 Therefore by the reconstruction result of Dubrovin \cite{D} and Teleman \cite{T} the Frobenius manifold $ \mathcal{M}_{\mu^\CO}$ is isomorphic to the big quantum cohomology of $M$.
\end{proof}

We will now use the results of Fukaya--Oh--Ohta--Ono to apply the previous theorem to the case of toric manifolds. Let $M$ be a compact toric manifold of real dimension $2n$
 and let $\mathfrak{PO}_M$ be its potential function (with bulk $\mathfrak{b}=0$), originally defined in \cite{FOOO1}. This is a Laurent series in $n$ variables, which in the Fano case agrees with the Hori--Vafa potential. It is roughly defined as follows: the variables $y_1,\ldots, y_n$ parametrize $\Lambda$-valued local systems on a torus fiber and the function $\mathfrak{PO}_M$ counts Maslov index two disks with boundary on the torus weighted by the holonomy of the local system around the boundary of the disk. 
 
   In \cite{FOOO}, the authors consider ${\sf Fuk}^{\sf fib}(M)$ a full subcategory of ${\sf Fuk}(M)$ whose objects are Lagrangian torus fibers equipped with $\Lambda$-valued local systems (or alternatively degree one bounding cochains). The next corollary shows that ${\sf Fuk}^{\sf fib}(M)$, together with the closed-open restricted to this subcategory, determine the Gromov-Witten invariants of $M$. The upcoming generation result of Abouzaid--Fukaya--Oh--Ohta--Ono~\cite{AFOOO}, states this subcategory split-generates ${\sf Fuk}(M)$. Therefore assuming this result one can replace ${\sf Fuk}^{\sf fib}(M)$ with ${\sf Fuk}(M)$ in the following corollary.

\begin{cor}\label{cor:toric}
	Let $M$ be a compact toric manifold and let $\mathfrak{PO}_M$ be its potential function. Assume that 
	$\mathfrak{PO}_M$ is a Morse function. Then the category ${\sf Fuk}^{\sf fib}(M)$ and the map $\CO$ determine a Frobenius manifold $\mathcal{M}_{\mu^\CO}$, which is isomorphic to the big quantum cohomology of $M$. In particular, all the genus zero  Gromov--Witten invariants of $M$ are determined by the category ${\sf Fuk}^{\sf fib}(M)$ and the closed-open map $\CO$.
\end{cor}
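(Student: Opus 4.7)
The plan is to verify that $M$ and its Fukaya category satisfy the two hypotheses of Theorem~\ref{thm:fuk}, namely semi-simplicity of $HH^\bullet({\sf Fuk}(M))$ and Assumption~\ref{ass:co}, and then invoke that theorem.

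First I would reduce the Fukaya category to the subcategory $\mathcal{C}$ built from the Lagrangian torus fibers considered by Fukaya--Oh--Ohta--Ono. Invoking the (assumed) generation result of Abouzaid--Fukaya--Oh--Ohta--Ono, the split-closed envelope of $\mathcal{C}$ is quasi-equivalent to ${\sf tw}^\pi {\sf Fuk}(M)$, so Hochschild invariants, the closed-open map, and the Calabi--Yau structure all transfer between the two. Under the Morse hypothesis on $\mathfrak{PO}_M$, the results of FOOO show that $\mathcal{C}$ decomposes as a direct sum $\bigoplus_i A_i$ indexed by critical points $p_i$ of $\mathfrak{PO}_M$, where each $A_i$ is quasi-isomorphic to a Clifford algebra whose Hessian bilinear form is non-degenerate (the same structure as in Section~\ref{sec:fukaya} for $\mathbb{CP}^n$, compare also Remark~\ref{rmk:cliff}). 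In particular $\mathcal{C}$ fits into the framework $(\dagger\dagger)$.

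Next I would establish semi-simplicity of $HH^\bullet(\mathcal{C})$. By Lemma~\ref{lem:muk_cliff} (applied to each summand $A_i$, since the Hessian non-degeneracy is the only input needed for the argument) each $HH^\bullet(A_i)$ is one-dimensional and concentrated in even degree, so $HH^\bullet(\mathcal{C}) \cong \prod_i HH^\bullet(A_i)$ is semi-simple, isomorphic to the Jacobian ring of $\mathfrak{PO}_M$. Alternatively, one may deduce this from the ring isomorphism $QH^\bullet(M) \cong {\sf Jac}(\mathfrak{PO}_M)$ proved in \cite{FOOO} together with the Morse condition.

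Then I would verify the three parts of Assumption~\ref{ass:co}. Part (1), that $\CO$ is a ring isomorphism, is proved in \cite{FOOO} for compact toric manifolds: they construct $\CO$ as the composite of their Kodaira--Spencer map with the identification of the Jacobian ring with $HH^\bullet(\mathcal{C})$, and both are ring isomorphisms in the Morse case. Part (3), that $\CO(c_1(M)) = [\prod_{k} \tfrac{2-k}{2}\m_k]$, follows from the FOOO formula $\CO(c_1(M)) = [\m_0]$ combined with Lemma~\ref{lem:eta} (the class $[\eta]$ vanishes because $HH^\bullet(\mathcal{C})$ is semi-simple, so $[\prod_k \tfrac{2-k}{2}\m_k] = [\tfrac{1}{2}\m_0 + \tfrac{1}{2}\eta] = [\tfrac{1}{2}\m_0]$ up to an overall scalar which is absorbed by the normalization of $c_1$). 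The hardest part is (2), the compatibility of the Poincar\'e pairing with $D^*\langle-,-\rangle_{\sf Muk}$; here I would use the residue pairing computation of FOOO, which matches the Poincar\'e pairing on $QH^\bullet(M)$ with the Grothendieck residue pairing on ${\sf Jac}(\mathfrak{PO}_M)$, together with the Clifford-algebra computation of Lemma~\ref{lem:muk_cliff} (applied summand by summand, with Hessian $h^{(i)}_{jk}$ at the critical point $p_i$) which identifies $D^*\langle-,-\rangle_{\sf Muk}$ on each summand with the corresponding residue. These two pairings agree up to a global constant, which by the remark following Proposition~\ref{prop:frobenius} can be absorbed into a rescaling of the cyclic structure on $\mathcal{C}$.

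The main obstacle is the pairing identification in step (2): one must carefully track signs and the normalization of the cyclic structure so that $D^*\langle-,-\rangle_{\sf Muk}$ matches the Poincar\'e pairing on the nose, rather than up to a scalar depending on the critical point; this is precisely why the slightly unusual normalization of the cyclic pairing was chosen in Section~\ref{sec:fukaya}. Once Assumption~\ref{ass:co} is verified, Theorem~\ref{thm:fuk} produces the grading operator $\mu^{\CO}$ and the isomorphism $\mathcal{M}_{\zeta^{\CO}} \cong $ big quantum cohomology of $M$, and the statement about genus zero Gromov--Witten invariants follows since the big quantum product encodes all such invariants.
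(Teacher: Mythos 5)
Your proposal follows essentially the same route as the paper: reduce to the FOOO subcategory of torus fibers via the generation result, verify the three parts of Assumption~\ref{ass:co} using the FOOO Kodaira--Spencer isomorphism with ${\sf Jac}(\mathfrak{PO}_M)$, the identity $\mathfrak{ks}(c_1(M))=\mathfrak{PO}_M$, and a Clifford-algebra computation of the Mukai pairing at the idempotents, then invoke Theorem~\ref{thm:fuk}. One small correction to your verification of Assumption~\ref{ass:co}(3): since the $k=0$ coefficient of $\prod_{k\geq 0}\tfrac{2-k}{2}\m_k$ is $\tfrac{2-0}{2}=1$, one has $\bigl[\prod_{k\geq 0}\tfrac{2-k}{2}\m_k\bigr]=[\m_0+\tfrac12\eta]=[\m_0]$ exactly (not $[\tfrac12\m_0]$), so no rescaling of $c_1$ is needed --- which is fortunate, as $c_1$ is a fixed class and could not legitimately be renormalized.
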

\begin{proof}
	For each critical point $p_k, k=1,\ldots m$, of $\mathfrak{PO}_M$, there is an object in ${\sf Fuk}^{\sf fib}(M)$ (a torus fiber equipped with a specific bounding cochain) whose endomorphism algebra is a cyclic $A_\infty$ algebra, which we denote by $A_k$. Moreover these are the only non-trivial objects in ${\sf Fuk}^{\sf fib}(M)$. 
	
	As a vector space, $A_k$ is simply the cohomology of an $n$-dimensional torus which we equip with the cyclic pairing
	\[\langle \alpha, \beta \rangle:= (\sqrt{-1})^{\frac{n(n+1)}{2}}(-1)^{|\alpha||\beta|'}\langle \alpha, \beta \rangle_{T^n},
	\]
	where $\langle - , -  \rangle_{T^n}$  is the Poincar\'e duality pairing in $H^{\bullet}(T^n)$. Moreover, if $p_k$ is a non-degenerate critical point, $A_k$ is quasi-isomorphic to a Clifford algebra as defined in (\ref{eq:clifford}) with $\lambda_k$ the critical value $\mathfrak{PO}_M(p_k)$ and $h_{ij}^{(k)}$ the entries of the Hessian at $p_k$ as explained in Remark \ref{rmk:cliff}. 
	
	Let $\textrm{Jac}(\mathfrak{PO}_M)$ be the Jacobian ring of $\mathfrak{PO}_M$, as defined in \cite[Section 2.1]{FOOO}. Theorem 1.1.1 in \cite{FOOO} defines a ring isomorphism $\mathfrak{ks}: QH^\bullet(M)\to \textrm{Jac}(\mathfrak{PO}_M)$ (for arbitrary $M$). As explained in \cite[Section 4.7]{FOOO}, composing $\mathfrak{ks}$ with an isomorphism from $\textrm{Jac}(\mathfrak{PO}_M)$ to  $HH^\bullet({\sf Fuk}^{\sf fib}(M))$ we obtain the closed-open map $\CO$. 
	Then the map $\CO$ sends the idempotent $\textbf{e}_k$ in $QH^\bullet(M)$ corresponding to $p_k$  (under the identification with $\textrm{Jac}(\mathfrak{PO}_M)$) to $\bone_k \in HH^\bullet(A_k)$. 
	Since $QH^\bullet(M)$ is a semi-simple Frobenius algebra and $\CO$ is a ring isomorphism it is enough to check $\langle \textbf{e}_k, \textbf{e}_k \rangle_{\sf PD}= D^*\langle \bone_k, \bone_k \rangle_{{\sf Muk}}$ to guarantee that Assumption \ref{ass:co}(2) holds. Let $\nu_k\in H^\bullet(T^n)$ be the Poincar\'e dual to the unit $\bone_k$, that is $\langle \bone_k , \nu_k  \rangle_{T^n}$=1. Then  $\nu_k$ is a generator of the Hochschild homology of $A_k$ and we have $D(\bone_k)=(\sqrt{-1})^{\frac{n(n+1)}{2}}\langle \nu_k, \nu_k \rangle_{{\sf Muk}}^{-1} \nu_k$. Hence $D^*\langle \bone_k, \bone_k \rangle_{{\sf Muk}}= (-1)^{\frac{n(n+1)}{2}}\langle \nu_k, \nu_k \rangle_{{\sf Muk}}^{-1}$. An elementary computation shows $ \langle \nu_k, \nu_k \rangle_{{\sf Muk}}= (-1)^{\frac{n(n+1)}{2}} Z(A_k)$, where $Z$ is the invariant defined in \cite[(1.3.22)]{FOOO}. Now our claim follows from the fact, proved in \cite[Proposition 3.5.2]{FOOO}, that $\langle \textbf{e}_k, \textbf{e}_k \rangle_{\sf PD}=Z(A_k)^{-1}$.
	
	Finally, Proposition 2.12.1 in \cite{FOOO}, gives $\mathfrak{ks}(c_1(M))= \mathfrak{PO}_M$, hence under the above identification with $HH^\bullet({\sf Fuk}^{\sf fib}(M))$ we get $\CO(c_1(M))= \sum_k \mathfrak{PO}_M(p_k) \bone_k=[\m_0]$, as required in Assumption \ref{ass:co}(3).
	Hence $HH^\bullet({\sf Fuk}^{\sf fib}(M))$ and the map $\CO$ satisfy all the conditions of Theorem \ref{thm:fuk} which then proves the result.
\end{proof}

\begin{rmk}
	In the case that $M$ is nef, it is proved in \cite{FOOO} that the invariant $Z(A_k)$ equals the Hessian of $ \mathfrak{PO}_M$ at the critical point $p_k$. In the case of $\mathbb{CP}^n$ we saw this in Lemma \ref{lem:muk_cliff}.
\end{rmk}

\begin{rmk}
	As mentioned in the Introduction, when $M$ is toric Fano  and we choose a generic torus invariant symplectic form, the potential $ \mathfrak{PO}_M$ is Morse. In the general case, if we consider ${\sf Fuk}^\mathfrak{b}(M)$, the bulk-deformed version of the Fukaya category \cite{FOOO} and pick a ``generic" bulk parameter $\mathfrak{b}$ we expect the corresponding potential $ \mathfrak{PO}_{M,\mathfrak{b}}$ to be Morse. Therefore Theorem~\ref{thm:fuk} would be applicable to this version of the Fukaya category. We do not explore this here since the analogue of condition (3) in Assumption \ref{ass:co} has not been established in this setting.
\end{rmk}

\subsection{Applications to mirror symmetry}
Let $(X,W)$ be a Landau-Ginzburg model, with $W\in \Gamma(X,\mO_X)$ a non-constant function. Consider the dg-category of matrix factorizations $$\prod_{\lambda}{\sf MF}(W-\lambda)$$ where the product is taken over distinct critical values $\lambda$ of $W$.
 We assume that $W$ has finitely many isolated critical points $p_1,\ldots,p_k\in X$. The Jacobian ring of $W$ thus also decomposes as
\[ {\sf Jac}(W)\cong\prod_{1\leq i\leq k} {\sf Jac}(W_{p_i}).\]
Here $W_{p_i}\in \mO_{X,p_i}$ is the localization of $W$ at the point $p_i$. Since the categories ${\sf MF}(W-\lambda)$ are naturally defined over ${\sf Jac}(W)$, they also naturally decompose using the idempotents of each ring ${\sf Jac}(W_{p_i})$:
\[ \prod_{\lambda}{\sf MF}(W-\lambda)\cong \prod_{1\leq i\leq k} {\sf MF}( W_{p_i} ).\]
Taking the corresponding VSHS's gives an isomorphism
\[\prod_{\lambda} \mV^{{\sf MF}(W-\lambda)} \cong \prod _{1\leq i\leq k} \mV^{{\sf MF}( W_{p_i} )}.\]
The product of VSHS's is defined as follows. We take the left hand side product of the above equation as an example. Assume that for each $\lambda$, the VSHS $\mV^{{\sf MF}(W-\lambda)}$ is defined over a formal deformation space $\mathcal{M}_\lambda$. Then the product VSHS $\prod_{\lambda} \mV^{{\sf MF}(W-\lambda)}$ is defined over $\prod_\lambda \mathcal{M}_\lambda$, the underlying locally free $\mO_{\prod_\lambda \mathcal{M}_\lambda}[[u]]$-module is given by 
\[ \prod_\lambda \pi_\lambda^* \big( \mV^{{\sf MF}( W_{p_i} )}\big),\]
with $\pi_\lambda$ the canonical projection map onto $\mathcal{M}_\lambda$. The connection operators and the pairing on the product are also defined through the pull-back of $\pi_\lambda$'s.

In the following, we prove a direct corollary of the discussion of the previous subsection that in the semi-simple case, homological mirror symmetry implies enumerative mirror symmetry.

\medskip
\begin{cor}~\label{cor:mirror}
Let $M$ be a symplectic manifold.  Assume that we are given $ F: {\sf tw}^\pi({\sf Fuk}(M)) \rightarrow \prod_{\lambda}{\sf MF}(W-\lambda)$ a quasi-equivalence of $A_\infty$-categories, with $(X,W)$ a Landau-Ginzburg mirror of $M$. Then we have
\begin{itemize}
\item[(1)] $F$ induces an isomorphism of VSHS's $ \mV^{{\sf Fuk}(M)} \cong \mV^{W}$ where $\mV^{W}$ stands for the VSHS constructed by Saito~\cite{Sai1}\cite{Sai2}.
\item[(2)]  Assume furthermore that  ${\sf Fuk}(M)$ has semi-simple Hochschild cohomology. Then there exists a Saito's primitive form $\zeta\in\mV^W$ such that its associated Frobenius manifold $\mathcal{M}_\zeta$  is isomorphic to the big quantum cohomology of $M$.
\end{itemize}
\end{cor}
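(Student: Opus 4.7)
The plan is to reduce both statements to results already established in the paper together with the functoriality of the VSHS construction under quasi-equivalences of $A_\infty$-categories.

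For part (1), I would first observe that the construction of the polarized VSHS $\mathcal{V}^{\mathcal{C}}$ is intrinsic to the $A_\infty$-category $\mathcal{C}$: Hochschild homology, negative cyclic homology, Getzler's connection, the $u$-connection in equation (\ref{eq:u_conn}), and the higher residue pairing all depend only on the Morita (equivalently, quasi-)equivalence class of $\mathcal{C}$, since they are built from the Hochschild complexes whose functoriality is standard. Hence an $A_\infty$ quasi-equivalence $F$ induces an isomorphism of VSHS's
\[ \mathcal{V}^{{\sf Fuk}(M)} \cong \mathcal{V}^{\prod_\lambda {\sf MF}(W-\lambda)} \cong \prod_{1 \leq i \leq k} \mathcal{V}^{{\sf MF}(W_{p_i})}. \]
To finish, I would invoke the comparison between the categorical VSHS of the local category ${\sf MF}(W_{p_i})$ and Saito's local VSHS at $p_i$, established by Shklyarov and by Li--Li--Saito~\cite{LLS}, identifying the Hochschild invariants of matrix factorization categories with the corresponding Brieskorn-type lattices.

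For part (2), I would apply Theorem~\ref{thm:fukaya} to obtain a categorical primitive form $\zeta^{\CO} \in \mathcal{V}^{{\sf Fuk}(M)}$ such that the associated Frobenius manifold $\mathcal{M}_{\zeta^{\CO}}$ is isomorphic to the big quantum cohomology of $M$. Here one uses that in the semi-simple setting Assumption~\ref{ass:co} is the expected state of affairs and ensures Theorem~\ref{thm:fukaya} is applicable. Transporting $\zeta^{\CO}$ across the VSHS isomorphism from part (1) produces an element $\zeta \in \mathcal{V}^W$. Because the conditions P1--P4 in Definition~\ref{defi:primitive} are formulated purely in terms of the VSHS structure, they are preserved by any isomorphism of VSHS's, so $\zeta$ is again a categorical primitive form. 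Similarly, the Frobenius manifold construction of Section~4.2 is functorial in the pair $(\mathcal{V}, \zeta)$, so $\mathcal{M}_\zeta \cong \mathcal{M}_{\zeta^{\CO}}$, which yields the big quantum cohomology of $M$.

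Finally, to promote $\zeta$ from a categorical primitive form on $\mathcal{V}^W$ to a Saito primitive form in the classical sense, I would appeal to the bijection of Theorem~\ref{thm:bijection2} together with the known identification, under the comparison with Saito's local VSHS, between good $\omega$-compatible splittings on the categorical side and Saito's good sections / opposite filtrations on the geometric side. This identification is precisely the content of~\cite{LLS} (in the quasi-homogeneous case) and its extensions.

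The main obstacle is the comparison step in part (1): giving a clean identification of each $\mathcal{V}^{{\sf MF}(W_{p_i})}$ with Saito's classical VSHS at $p_i$ that simultaneously matches the connection in deformation directions, the $u$-connection, the higher residue pairing, and the Calabi--Yau normalization. Once this dictionary is in place, the transport of the primitive form and the statement about Frobenius manifolds follow formally from Theorem~\ref{thm:fukaya} and the intrinsic character of the constructions in Section~4.
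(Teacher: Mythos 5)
Your proposal follows essentially the same route as the paper: decompose both sides by critical points, identify each local categorical VSHS $\mV^{{\sf MF}(W_{p_i})}$ with Saito's local VSHS $\mV^{W_{p_i}}$, and then push forward the categorical primitive form $\zeta^{\CO}$ supplied by Theorem~\ref{thm:fukaya}. The only substantive difference is that the comparison step you flag as the ``main obstacle'' is handled in the paper by citing the comparison theorem of~\cite{Tu} (rather than~\cite{LLS}), after which the transport of the primitive form is immediate, exactly as you describe.
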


\begin{proof}
For each $p_i$, there exists an isomorphism of VSHS's proved in~\cite{Tu}:
\[ \mV^{{\sf MF}(W_{p_i})} \cong \mV^{W_{p_i}}.\]
Furthermore, Saito's VSHS $\mV^{W}$ also naturally decomposes according to its localizations at each critical point $p_i$'s, i.e. we have
\[ \mV^W \cong \prod_{1\leq i\leq k} \mV^{W_{p_i}}.\]
Putting all together, we obtain a chain of isomorphism of VSHS's:
\[  \mV^{{\sf Fuk}(M)} \cong \prod_{\lambda} \mV^{{\sf MF}(W-\lambda)} \cong \prod _{1\leq i\leq k} \mV^{{\sf MF}( W_{p_i} )}\cong \prod_{1\leq i\leq k} \mV^{W_{p_i}} \cong \mV^W.\]
This proves part $(1)$.

To prove part $(2)$, we take the grading operator $\mu^{\CO}$ on $HH_\bullet\big( {{\sf Fuk}(M)}\big)$ to obtain a primitive form $\zeta^\CO$ of $\mV^{{\sf Fuk}(M)}$. In the previous subsection, we have proved that the Frobenius manifold $\mathcal{M}_{\zeta^\CO}$ is isomorphic to the big quantum cohomology of $M$. To prove part $(2)$, we may take Saito's primitive form $\zeta\in \mV^W$ obtained by pushing forward of $\zeta^\CO$ under the above isomorphism of VSHS's. 
\end{proof}

\begin{ex}
We compute the primitive form  for the mirror of $\mathbb{CP}^1$. If we normalize the symplectic form $\omega$ so that $\int_{\mathbb{CP}^1} \omega=1$, then its mirror is given by $X={\sf Spec\, }\mathbb{K}[x,x^{-1}]$ and $W=x+\frac{T}{x}$. One can check that taking the volume form $\omega=\frac{dx}{x}$ to pull-back the residue pairing on ${\sf Jac}(W)\cdot dx$ to ${\sf Jac}(W)$ gives an isomorphism of Frobenius algebras $QH^\bullet(\mathbb{CP}^1)\cong {\sf Jac}(W)$ which sends $\bone \mapsto \bone$ and the symplectic form $\omega \mapsto x$. Thus the grading operator on Hochschild homology acts by
\[ \mu\left( \frac{dx}{x} \right)= - \frac{1}{2} \frac{dx}{x}, \;\; \mu \left( dx\right) = \frac{1}{2} dx.\]
Denote by $s_0:=\frac{dx}{x}$ and $s_1:=dx$  two cohomology classes in the twisted de Rham complex $\big(\Omega_X^*[[u]], dW+ud_{DR}\big)$. Saito's $u$-connection acts by $\nabla_{\frac{d}{du}}=\frac{d}{du} - \frac{1}{2u} - \frac{W}{u^2}$ One verifies that we have
\begin{align*}
\nabla_{u\frac{d}{du}} s_0 & = -\frac{1}{2} s_0 -u^{-1}(2s_1)\\
\nabla_{u\frac{d}{du}} s_1 &= \frac{1}{2} s_1 - u^{-1} ( 2Ts_0)
\end{align*}
By Theorem~\ref{thm:bijection1}, the regular part of the above is given by the grading operator $\mu$, thus the basis $\{ s_0, s_1\}$ is the splitting of the Hodge filtration uniquely determined by $\mu$. Using Theorem~\ref{thm:bijection2}, we may compute the primitive form $\zeta$ determined by this basis using an algorithm of Li-Li-Saito~\cite{LLS}. The result gives that $\zeta$ is simply the constant form $\frac{dx}{x}$, independent of the deformation parameters and the $u$ parameter.
\end{ex}

\appendix

\section{Hochschild invariants of Calabi--Yau $A_\infty$-algebras}~\label{app:duality}

In this appendix we prove Theorem \ref{thm:Dfolk}.
We start with the following  proposition.

\begin{prop}\label{prop:adjoint}
	Given a Hochschild cochain class $\varphi \in HH^{\bullet}\left(A \right)$ and chains $\alpha, \beta \in HH_{\bullet}\left(A\right)$ we have
	$$\langle \varphi \cap \alpha , \beta \rangle_{{\sf Muk}}=  (-1)^{|\varphi||\alpha|}\langle  \alpha , \varphi \cap \beta \rangle_{{\sf Muk}}$$
	In other words, capping with a fixed class is a self-adjoint map. 
\end{prop}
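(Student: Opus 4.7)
The plan is to establish the identity at the chain level, in the same spirit as the proof of Lemma~\ref{lem:M-antisymmetric}, using the graphical calculus for the Mukai pairing from Sheridan~\cite[Appendix C]{She}. Recall that the chain-level Mukai pairing (\ref{eq:mukai}) can be visualized as a super-trace of a configuration in which the variable $c$ is inserted at a distinguished point on a circle, while the Hochschild chains $\alpha$ and $\beta$ decorate the two arcs of the circle and two $\m_*$ vertices absorb the relevant cyclic segments. The cap product $\varphi \cap (-) = (-1)^{|\varphi|} b\{\varphi\}(-) = (-1)^{|\varphi|} \mathcal{T}(\m, \varphi)(-)$ inserts $\varphi$ at some chain position and wraps a cyclic segment containing this insertion by an additional $\m$.

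First I would expand $\langle \varphi \cap \alpha, \beta \rangle_{\sf Muk}$ using the two explicit formulas and observe that the outer $\m$ coming from $b\{\varphi\}$ can be absorbed into one of the two $\m_*$ operations of the Mukai pairing. The resulting expression is a sum of super-traces over the same circular diagrams as $\langle \alpha, \beta\rangle_{\sf Muk}$, indexed now by insertion positions of $\varphi$ lying in the $\alpha$-arc. An analogous expansion of $\langle \alpha, \varphi \cap \beta\rangle_{\sf Muk}$ produces the sum over the same diagrams but with $\varphi$ inserted in the $\beta$-arc. Together, the two sums exhaust all positions on the circle where $\varphi$ could land.

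The crucial step is then to invoke the cyclic property of the super-trace, $\mathrm{str}(A \circ B) = (-1)^{|A||B|} \mathrm{str}(B \circ A)$, to rotate each configuration in the first sum until $\varphi$ passes $c$ and lands in the $\beta$-arc. After rotation, the resulting terms match the second sum term-by-term, with an overall sign produced by the trace rotation together with the Koszul signs accumulated when commuting $\varphi$ past the intervening $\alpha_i$'s. These contributions combine to $(-1)^{|\varphi|(|\alpha_0| + \ldots + |\alpha_r|)} = (-1)^{|\varphi||\alpha|}$, which is the desired factor.

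The main obstacle is the sign bookkeeping: the cap product, the Mukai pairing, and the cyclic symmetry of the super-trace each contribute intricate Koszul signs involving both shifted and unshifted degrees, and one has to verify that they assemble to exactly $(-1)^{|\varphi||\alpha|}$. Once the pictorial bijection between the two sums is in place, this is a routine but delicate calculation of the same kind already performed in the proof of Lemma~\ref{lem:M-antisymmetric}.
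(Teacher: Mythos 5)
There is a genuine gap at the heart of your argument, namely the step where you claim that the outer $\m$ coming from $b\{\varphi\}=\mathcal{T}(\m,\varphi)$ ``can be absorbed'' into one of the two $\m_*$ vertices of the Mukai pairing, so that $\langle\varphi\cap\alpha,\beta\rangle_{\sf Muk}$ becomes a sum over the same circular diagrams as $\langle\alpha,\beta\rangle_{\sf Muk}$ with a single $\varphi$ inserted in the $\alpha$-arc. This is not a tautology: after substituting the cap product into (\ref{eq:mukai}) one obtains configurations with \emph{three} nested $\m_*$ vertices (plus the $\varphi$), and the only way to reduce to the two-vertex Mukai diagrams is to apply the $A_\infty$ relations. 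Doing so generates many extra terms --- terms where an inner $\m$ acts on consecutive entries of $\alpha$ or $\beta$ (i.e.\ terms of the form $H(b(\alpha)\otimes\beta)$ and $H(\alpha\otimes b(\beta))$ for suitable operators $H$), and terms in which $[\m,\varphi]$ appears, which vanish only because $\varphi$ is a cocycle. Your proposal never invokes the $A_\infty$ relations or the closedness of $\varphi$, yet both are indispensable; without them the claimed term-by-term bijection between ``$\varphi$ in the $\alpha$-arc'' and ``$\varphi$ in the $\beta$-arc'' diagrams simply does not exist at the chain level. This is precisely why the paper's proof introduces explicit homotopy operators $H_1,H_2,H_3$ and proves the chain-level identity
\[\langle \varphi \cap \alpha , \beta \rangle_{{\sf Muk}}-(-1)^{|\varphi||\alpha|}\langle  \alpha , \varphi \cap \beta \rangle_{{\sf Muk}} + H\bigl(b(\alpha)\otimes\beta + (-1)^{|\alpha|}\alpha\otimes b(\beta)\bigr)=0,\]
which only descends to the stated equality on homology.

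Two further points. First, the analogy with Lemma~\ref{lem:M-antisymmetric} is not as close as you suggest: there the operator being moved across the pairing is $\mathcal{L}_Q$, which genuinely distributes $Q$ over insertion positions without creating an extra $\m$ vertex (and even that proof uses the relation $[\m,Q]=\eta$, not just cyclicity of the super-trace). The cap product is a $\mathcal{T}$-type operator and behaves differently. Second, your final sign $(-1)^{|\varphi|(|\alpha_0|+\cdots+|\alpha_r|)}$ is not equal to $(-1)^{|\varphi||\alpha|}$ in general, since $|\alpha|=|\alpha_0|+|\alpha_1|'+\cdots+|\alpha_r|'$ differs from $|\alpha_0|+\cdots+|\alpha_r|$ by $r$ modulo $2$; this is a symptom of the bookkeeping you have deferred rather than a fatal error, but it should be flagged.
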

\begin{proof}
The proof is identical to that of Lemma 5.39 in \cite{She}. Given a closed cochain $\varphi$ we define the maps $H_1, H_2,  H_3: CC_\bullet(A)^{\otimes 2}\to CC_\bullet(A)$, as follows, for $\alpha=\alpha_0|\alpha_1|\ldots|\alpha_r$ and $\beta=\beta_0|\beta_1|\ldots|\beta_s$, we set
\begin{align*}
& H_1(\alpha, \beta)\\
&= \sum {\sf tr}\left[c \to (-1)^{\dagger_1}\m_*\left(\alpha_j,..\varphi_*(\alpha_k,..),..,\alpha_0,..,\m_*(\alpha_i,..,c,\beta_n,.. ,\beta_0,..),.. \right)\right]
\end{align*}
where $\dagger_1= |c||\beta| +|\varphi|'+ |\varphi|(|\alpha_j|'+..+|\alpha_{k-1}|') + |\alpha_0|'+..+|\alpha_{i-1}|'+|\alpha_k|'+..+|\alpha_{r}|' + @ $.
\begin{align*}
& H_2(\alpha, \beta)\\
&= \sum {\sf tr}\left[c \to (-1)^{\dagger_2}\m_*\left(\alpha_j,..,\alpha_0,..,\m_*\left(\alpha_i,..,c,\beta_n,..,\varphi_*(\beta_p,..), ..,\beta_0,..\right),.. \right)\right]
\end{align*}
where $\dagger_2 = |c|(|\beta|+|\varphi|') + |\varphi||\alpha|+|\varphi|'(|\beta_n|'+..+|\beta_{p-1}|') +|\alpha_i|'+..+|\alpha_{j-1}|'+ @$ .
\begin{align*}
& H_3(\alpha, \beta)\\
& = \sum {\sf tr}\left[c \to (-1)^{\dagger_3}\varphi_*\left(\alpha_j,..,\m_*\left(\alpha_k,..,\alpha_0,..,\m_*(\alpha_i,..,c,..,\beta_0,..),.. \right),\beta_n,..\right)\right]
\end{align*}
where $\dagger_3= 1+|c||\beta| + |\alpha_0|'+..+|\alpha_{i-1}|'+ |\alpha_k|'+..+|\alpha_{r}|'+@$.

Finally we define $H:=H_1+H_2+H_3$. Then the result follows from the following statement: for any chains $\alpha$ and $\beta$,
\[\langle \varphi \cap \alpha , \beta \rangle_{{\sf Muk}}-(-1)^{|\varphi||\alpha|}\langle  \alpha , \varphi \cap \beta \rangle_{{\sf Muk}} + H(b(\alpha)\otimes\beta + (-1)^{|\alpha|}\alpha\otimes b(\beta))=0\] 
This is a direct, albeit long, computation (that we omit) using only the $A_\infty$ relations, the closedness of $\varphi$ and the fact that ${\sf tr}(A\circ B)={\sf tr}(B\circ A)$.
\end{proof}

\begin{prop}\label{prop:folka}
	Given a Hochschild cochain classes $\varphi, \psi \in HH^{\bullet}\left(A \right)$ we have
	$$D(\varphi \cup \psi)=(-1)^{|\varphi|d}\varphi\cap D(\psi).$$
	In other words, $D$ is a map of $HH^\bullet(A)$-modules (of degree $d$).
\end{prop}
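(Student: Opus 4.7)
The plan is to show that both sides of the claimed identity pair identically with every Hochschild chain $\beta \in HH_\bullet(A)$ under the Mukai pairing, and then conclude by non-degeneracy (Shklyarov). The first step is to characterize $D$ through its defining construction: the composition $HH^\bullet(A) \cong HH^\bullet(A,A^\vee[d]) \cong HH_\bullet(A)^\vee[d]$ yields a natural pairing
\[ \Psi : HH^\bullet(A) \otimes HH_\bullet(A) \to \mathbb{K}, \qquad \Psi(\varphi,\beta) = \langle D(\varphi), \beta\rangle_{\sf Muk}, \]
which on chains, for $\varphi=\{\varphi_k\}$ and $\beta=\beta_0|\beta_1|\cdots|\beta_k$, has the explicit form $\Psi(\varphi,\beta) = (-1)^{\#}\langle \varphi_k(\beta_1,\ldots,\beta_k), \beta_0\rangle$ for a conventionally determined sign $\#$.

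Applying Proposition~\ref{prop:adjoint} to $\varphi\cap D(\psi)$ gives
\[ \langle (-1)^{|\varphi|d}\varphi\cap D(\psi), \beta\rangle_{\sf Muk} = (-1)^{|\varphi|d + |\varphi|(d-|\psi|)}\Psi(\psi, \varphi\cap \beta) = (-1)^{|\varphi||\psi|}\Psi(\psi, \varphi\cap \beta), \]
so the proposition reduces to the chain-level ``projection formula''
\[ \Psi(\varphi\cup\psi, \beta) = (-1)^{|\varphi||\psi|}\Psi(\psi, \varphi\cap \beta). \]
Once this identity is established, we conclude $\langle D(\varphi\cup\psi),\beta\rangle_{\sf Muk} = \langle (-1)^{|\varphi|d}\varphi \cap D(\psi), \beta\rangle_{\sf Muk}$ for all $\beta$, and non-degeneracy of the Mukai pairing finishes the proof.

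I would prove the projection formula by a direct chain-level computation structurally parallel to the proof of Proposition~\ref{prop:adjoint}. Expanding $\varphi\cup\psi = (-1)^{|\varphi|'}\m\{\varphi,\psi\}$, $\varphi\cap\beta = (-1)^{|\varphi|}b\{\varphi\}(\beta)$, and the explicit formula for $\Psi$, both sides produce sums of expressions of the schematic form $\pm \langle \m_*(\ldots,\psi_*(\ldots,\varphi_*(\ldots),\ldots),\ldots), \beta_*\rangle$. The cyclic symmetry of the pairing $\langle-,-\rangle$ allows one to bijectively match terms on the two sides: a term on the left where $\varphi$ appears ``inside $\psi$'' before being paired with $\beta_0$ corresponds, after a cyclic rotation of the arguments, to a term on the right where $\varphi$ acts on a segment of $\beta$ first. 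The sign $(-1)^{|\varphi||\psi|}$ is exactly the Koszul sign picked up by this rotation. As in Proposition~\ref{prop:adjoint}, the equality will hold only up to the boundary of an explicit homotopy $H'_1+H'_2+H'_3 : CC^\bullet(A)^{\otimes 2}\otimes CC_\bullet(A) \to \mathbb{K}$ built from $\m$, $\varphi$, $\psi$, $\beta$ and the cyclic pairing; this implies the identity on homology. The main obstacle is the tedious but structurally routine sign bookkeeping needed to verify that the explicit homotopy trivializes the difference of the two sides on chains—conceptually the argument is just ``cyclic symmetry plus the $A_\infty$ relations,'' but writing it out requires the same level of care as in Proposition~\ref{prop:adjoint}.
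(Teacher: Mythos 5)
Your proposal is correct and follows essentially the same route as the paper's proof: reduce by non-degeneracy of the Mukai pairing to testing against an arbitrary chain, use the defining chain-level formula $\langle D(\varphi),\alpha\rangle_{\sf Muk}=\pm\langle\varphi(\alpha_1,\ldots,\alpha_n),\alpha_0\rangle$, move $\varphi$ across the pairing via Proposition~\ref{prop:adjoint}, and match the two resulting sums using cyclic symmetry of $\langle-,-\rangle$. The only cosmetic difference is that the paper's final matching is an on-the-nose chain-level identity (no extra homotopy is needed at that stage, since the homotopy was already absorbed into Proposition~\ref{prop:adjoint}), whereas you allow for one; this is harmless since the statement lives on homology.
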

\begin{proof}	
Since the Mukai pairing is non-degenerate it is enough to check
\begin{equation}\label{eq:module}
\langle D(\varphi \cup \psi) , \alpha \rangle_{{\sf Muk}}= \langle (-1)^{|\varphi|d}\varphi \cap D(\psi) , \alpha \rangle_{{\sf Muk}},
\end{equation}
for all $\alpha \in HH_{\bullet}\left(A \right)$.
By definition of $D$, we have
\[\langle D(\varphi) , \alpha \rangle_{{\sf Muk}}= (-1)^{|\alpha_0|'|\alpha_{1,n}|'}\langle \varphi(\alpha_1, \ldots, \alpha_n) , \alpha_0 \rangle,
\]
where $|\alpha_{1,n}|':=|\alpha_1|'+\ldots+|\alpha_n|'$.
Therefore, using commutativity of $\cup$, the left-hand side of (\ref{eq:module}) equals
\begin{align}\label{eq:leftmod}
 (-1)^{|\varphi||\psi|+|\alpha_0|'|\alpha_{1,n}|'}&\langle(\psi\cup\varphi)(\alpha_1,\ldots, \alpha_n), \alpha_0 \rangle =\nonumber\\
= (-1)^{\Diamond} &\sum \langle \m_p(\alpha_1, \ldots,\psi_a(\alpha_{i+1}, \ldots), \ldots, \varphi_b(\alpha_{j+1}, \ldots), \ldots \alpha_n), \alpha_0\rangle.
\end{align}	
where $\Diamond= |\varphi||\psi|+|\alpha_0|'|\alpha_{1,n}|'+ |\psi|'|\alpha_{1,i}|'+|\varphi|'|\alpha_{1,j}|'$. On the other hand, using Proposition \ref{prop:adjoint}, the right hand side of (\ref{eq:module}) equals
\begin{align}\label{eq:rightmod}
& (-1)^{|\varphi|d+ |\varphi|(|\psi|+d+1)}\langle D(\psi) , \varphi \cap \alpha \rangle_{{\sf Muk}}=\nonumber\\	
&= \sum (-1)^{\delta_1 } \langle D(\psi), \m_q(\ldots \varphi_b(\alpha_{j+1}\ldots), \ldots \alpha_0, \ldots) \alpha_{i+1} \ldots \alpha_{i+a}\rangle \\
&= \sum(-1)^{\delta_2}\langle \psi_a(\alpha_{i+1} \ldots ) , \m_q(\ldots \varphi_b(\alpha_{j+1}\ldots), \ldots \alpha_0, \ldots)\rangle.\nonumber
\end{align}	
where $\delta_1= |\varphi||\psi|+|\varphi|'|\alpha_{i+a+1,j}|'+|\alpha_{0,i}|'|\alpha_{i+1,n}|'$ and $$\delta_2=|\varphi||\psi|+|\varphi|'|\alpha_{i+a+1,j}|'+|\alpha_{0,i}|'|\alpha_{i+1,n}|'+|\alpha_{i+1,i+a}|'(|\varphi|+|\alpha_{0,i}|'+|\alpha_{i+a+1,n}|').$$
Using cyclic symmetry of the pairing $\langle - , -\rangle$, a straightforward computation shows that the expressions in (\ref{eq:leftmod}) and (\ref{eq:rightmod}) are equal, which proves the desired result.
\end{proof}

We can now prove Theorem~\ref{thm:Dfolk}. Indeed, Part (a) of the theorem is exactly Proposition \ref{prop:folka} proved above. In particular, for any $\varphi \in HH^{\bullet}\left(A \right) $, we have $D(\varphi)= (-1)^{|\varphi|d}\varphi \cap D(\bone)= (-1)^{|\varphi|d}\varphi \cap \omega$. For part (b), it is well known that the cup product is associative and graded-commutative. The only condition left to check is the compatibility between product and pairing. We use Proposition~\ref{prop:adjoint} to compute
\begin{align*}
D^*\langle \varphi\cup\psi, \rho \rangle_{{\sf Muk}} =&(-1)^{(|\varphi|+|\psi|)d}\langle D(\varphi\cup\psi),D(\rho)\rangle_{{\sf Muk}} \\
=& (-1)^{(|\varphi|+|\psi|)d+|\varphi||\psi| + (|\varphi|+|\psi|+ |\rho|)d} \langle (\psi\cup\varphi)\cap \omega,\rho\cap \omega\rangle_{{\sf Muk}}\\	
=& (-1)^{|\varphi||\psi| + |\rho|d}\langle \psi\cap(\varphi\cap \omega),\rho\cap \omega\rangle_{{\sf Muk}}\\
=& (-1)^{|\varphi||\psi| + |\rho|d+ |\psi|(|\varphi|+d)}\langle \varphi\cap \omega, \psi\cap (\rho\cap \omega)\rangle_{{\sf Muk}}\\
=& (-1)^{ (|\psi| + |\rho|)d}\langle \varphi\cap \omega, (\psi\cup\rho)\cap \omega)\rangle_{{\sf Muk}}\\ 
=& (-1)^{ |\varphi|d} \langle D(\varphi), D(\psi\cup\rho)\rangle_{{\sf Muk}}=D^*\langle \varphi, \psi\cup\rho\rangle_{{\sf Muk}}\nonumber
\end{align*}	

\section{Proof of Proposition \ref{prop:parallel}}

Let $H$ be the map introduced in the proof of Proposition \ref{prop:adjoint} and extend it sesquilinearly to $CC_\bullet(A)[[u]]^{\otimes 2}$. We claim that for any negative cyclic chains $\alpha$ and $\beta$
\begin{align}\label{eq:parhomo}
\frac{d}{du}\langle \alpha , \beta \rangle_{{\sf hres}} = & \langle \nabla_{\frac{d}{du}} \alpha , \beta \rangle_{{\sf hres}}- \langle \alpha , \nabla_{\frac{d}{du}} \beta \rangle_{{\sf hres}}\nonumber\\
& + \frac{1}{2u^2}H\left((b+uB)(\alpha), \beta\right) + \frac{(-1)^{|\alpha|}}{2u^2}H\left(\alpha, (b+uB)(\beta) \right), 
\end{align}
which immediately implies the result. 

Indeed, writing $\displaystyle \alpha=\sum_{n\geq 0}\alpha_n u^n$, $\displaystyle \beta=\sum_{n\geq 0}\beta_n u^n$ and using the definitions of the pairing and the connection to expand the above expression we see that the left-hand side equals $\sum_{n\geq 0}\sum_{k=0}^{n+1}(-1)^{n-k+1}(n+1)\langle \alpha_k , \beta_{n-k+1} \rangle_{{\sf Muk}}u^n$. The right hand side equals
\begin{align*}
\sum_{n\geq 0}\sum_{k=0}^{n+1} & (-1)^{n-k+1}(n+1)\langle \alpha_k , \beta_{n-k+1} \rangle_{{\sf Muk}}u^n + \\
\sum_{n\geq -2}\sum_{k=0}^{n+2} & \frac{(-1)^{n-k}}{2}\left( \langle b\{\m'\}\alpha_k , \beta_{n-k+2} \rangle_{{\sf Muk}}- \langle \alpha_k , b\{\m'\}\beta_{n-k+2} \rangle_{{\sf Muk}}\right)u^n+\\
\sum_{n\geq -1}\sum_{k=0}^{n+1} & \frac{(-1)^{n-k+1}}{2}\Big( \langle (\Gamma+B\{\m'\})\alpha_k , \beta_{n-k+1} \rangle_{{\sf Muk}} +\\
& \;\;\;\;\;\;\;\;\;\;\;\;\;\;\;\;\; \langle \alpha_k , (\Gamma+B\{\m'\})\beta_{n-k+1} \rangle_{{\sf Muk}}\Big)u^n+ \\
\sum_{n\geq -2}\sum_{k=0}^{n+2} & \frac{(-1)^{n-k}}{2}\left( H\left( b(\alpha_k), \beta_{n-k+2}\right) +(-1)^{|\alpha|} H\left( \alpha_k , b(\beta_{n-k+2})\right)\right)u^n + \\
\sum_{n\geq -1}\sum_{k=0}^{n+1} & \frac{(-1)^{n-k+1}}{2}\left( H\left( B(\alpha_k), \beta_{n-k+1}\right) -(-1)^{|\alpha|} H\left( \alpha_k , B(\beta_{n-k+1})\right)\right)u^n \\
\end{align*}
Therefore the claim follows from the following two identities
\begin{align*}
\langle b\{\m'\}x ,  y \rangle_{{\sf Muk}}& - \langle x , b\{\m'\}y \rangle_{{\sf Muk}} + H\left( b(x), y\right) +(-1)^{|x|} H\left( x , b(y)\right) =0,
\end{align*} 
\begin{align*}
\langle (\Gamma+B\{\m'\})x ,  y \rangle_{{\sf Muk}} + \langle x,& (\Gamma+B\{\m'\})y \rangle_{{\sf Muk}} +\\
&+ H\left( B(x), y\right) -(-1)^{|x|} H\left( x , B(y)\right)=0,
\end{align*} 
for arbitrary Hochschild chains $x=x_0|x_1 \ldots x_r$ and $y=y_0|y_1 \ldots y_s$. The first identity is exactly the content of the proof of Proposition \ref{prop:adjoint}, since $|\m'|=0$. For the second one, we first show by direct computation that
\begin{align}\label{eq:Bformula}
\langle  B\{\m'\}x , & y \rangle_{{\sf Muk}}+ \langle x, B\{\m'\}y \rangle_{{\sf Muk}} + H\left( B(x), y\right) -(-1)^{|x|} H\left( x , B(y)\right)= \nonumber\\ 
  = & - \sum {\sf tr}\left[c \to (-1)^{\dagger}\m'_*\left(x_j,..,x_0,..,\m_*(x_i,..,c,y_n,.. ,y_0,..), y_m,.. \right)\right]\nonumber\\
 &  - \sum {\sf tr}\left[c \to (-1)^{\dagger}\m_*\left(x_j,..,x_0,..,\m'_*(x_i,..,c,y_n,.. ,y_0,..), y_m,.. \right)\right]
\end{align} 
where $\dagger$ is as in (\ref{eq:mukai}). Then, counting the number of inputs as in the proof of Lemma \ref{lem:M-antisymmetric}, we see the right-hand side in (\ref{eq:Bformula}) gives $(r+s)\langle x , y \rangle_{{\sf Muk}}$. Finally we observe that 
\[\langle \Gamma(x) , y \rangle_{{\sf Muk}}+ \langle x, \Gamma(y) \rangle_{{\sf Muk}}= -(r+s)\langle x , y \rangle_{{\sf Muk}}\]
which therefore cancels with (\ref{eq:Bformula}), proving the required identity.

\end{document}